\newlength\FHoffset
\newlength\FHleft
\newlength\FHright
\renewcommand{\headrulewidth}{1.0pt} 
\newbox\FHline
\title{\textbf{Three dimensional branching pipe flows for optimal scalar transport between walls}}
\author[]{\large \textbf{Anuj Kumar}\footnote{Department of Applied Mathematics, University of California Santa Cruz, CA 95064. \textit{Email:}  \href{mailto:akumar43@ucsc.edu}{akumar43@ucsc.edu}. }}
\date{}
\newtheoremstyle{mystyle}%                % Name
  {}%                                     % Space above
  {}%                                     % Space below
  {\itshape}%                                     % Body font
  {}%                                     % Indent amount
  {\bfseries}%                            % Theorem head font
  {.}%                                    % Punctuation after theorem head
  { }%                                    % Space after theorem head, ' ', or \newline
  {\thmname{#1}\thmnumber{ #2}\thmnote{ (#3)}}%                                     % Theorem head spec (can be left empty, meaning `normal')
\theoremstyle{mystyle}
\newtheorem{theorem}{Theorem}[section]
\newtheorem{proposition}[theorem]{Proposition}
\newtheorem{lemma}{Lemma}[section]
\newtheorem{corollary}[theorem]{Corollary}
\newtheorem{conjecture}[theorem]{Conjecture}
\newtheorem{definition}{Definition}[section]
\theoremstyle{definition}
\newtheorem{remark}{Remark}[section]
\newcommand\norm[1]{\left\lVert#1\right\rVert}
\newcommand{\bs}[1]{\boldsymbol{#1}}
\newcommand{\wh}[1]{\widehat{#1}}
\newcommand{\wt}[1]{\widetilde{#1}}
\newcommand{\ol}[1]{\overline{#1}}
\newcommand\reallywidecheck[1]{%
\savestack{\tmpbox}{\stretchto{%
  \scaleto{%
    \scalerel*[\widthof{\ensuremath{#1}}]{\kern-.6pt\bigwedge\kern-.6pt}%
    {\rule[-\textheight/2]{1ex}{\textheight}}%WIDTH-LIMITED BIG WEDGE
  }{\textheight}% 
}{0.5ex}}%
\stackon[1pt]{#1}{\scalebox{-1}{\tmpbox}}%
}
 \DeclareMathOperator{\diverge}{div}
 \DeclareMathOperator{\dist}{dist}
\DeclareMathOperator\supp{supp}
\def\Xint#1{\mathchoice
{\XXint\displaystyle\textstyle{#1}}%
{\XXint\textstyle\scriptstyle{#1}}%
{\XXint\scriptstyle\scriptscriptstyle{#1}}%
{\XXint\scriptscriptstyle\scriptscriptstyle{#1}}%
\!\int}
\def\XXint#1#2#3{{\setbox0=\hbox{$#1{#2#3}{\int}$ }
\vcenter{\hbox{$#2#3$ }}\kern-.6\wd0}}
\def\dashint{\Xint-}
\numberwithin{equation}{section}
\begin{document}

\maketitle

\vspace{-0.5cm}
\begin{abstract}
We consider the problem of ``wall-to-wall optimal transport'' in which we attempt to maximize the transport of a passive temperature field between hot and cold plates. Specifically, we optimize the choice of the divergence-free velocity field in the advection-diffusion equation subject to an enstrophy constraint (which can be understood as a constraint on the power required to generate the flow). Previous work established an a priori upper bound on the transport, scaling as the 1/3-power of the flow's enstrophy. Recently,  Tobasco \& Doering
(\href{https://doi.org/10.1103/PhysRevLett.118.264502}{\textit{Phys. Rev. Lett.} vol.118, 2017, p.264502}) and Doering \& Tobasco (\href{https://doi.org/10.1002/cpa.21832}{\textit{Comm. Pure Appl. Math.} vol.72, 2019, p.2385--2448}) constructed self-similar two-dimensional steady branching flows saturating this bound up to a logarithmic correction. This logarithmic correction appears to arise due to a topological obstruction inherent to two-dimensional steady branching flows. We present a construction of three-dimensional ``branching pipe flows" that eliminates the possibility of this logarithmic correction and therefore identifies the optimal scaling as a clean 1/3-power law. Our flows resemble previous numerical studies of the three-dimensional wall-to-wall problem by Motoki, Kawahara \& Shimizu (\href{https://doi.org/10.1017/jfm.2018.557}{\textit{J. Fluid Mech.} vol.851, 2018, p.R4}). We also discuss the implications of our result to the heat transfer problem in Rayleigh--B\'enard convection and the problem of anomalous dissipation in a passive scalar.
\end{abstract}

{
  \hypersetup{linkcolor=black}
  \tableofcontents
}

\section{Introduction}\label{section-introduction}
\subsection{Motivation}
An important subdiscipline of thermal engineering is devoted to the design of heat exchangers, ventilation systems, air-conditioning systems, refrigeration systems, boilers, and chemical reactors \cite{arora2000refrigeration, jakobsen2008chemical, thulukkanam2013heat, alam2018comprehensive}. A fundamental challenge in this field is how to transport heat from a hot surface to a cold surface by moving the fluid using actuators such as fans or pumps, which can advect heat at a quicker rate than pure conduction.  For most practical purposes, one would of course like to do so in the most economical way, minimizing the power supplied to the actuators. In the design of the systems described above, we would therefore like to know the answers to the following questions:
\begin{enumerate}[label=\textbf{(\Alph*)}]
\item What is the optimal heat transfer rate as a function of the power supplied?
\item What is the corresponding placement of fans/pumps which maximizes the heat transfer for a given amount of power supplied?
\end{enumerate}

To model the problem mathematically, we use the forced Navier--Stokes equation to describe the flow of an incompressible fluid:
\begin{eqnarray}
\partial_t \bs{u} + \bs{u} \cdot \nabla \bs{u} = - \nabla p + \nu \Delta \bs{u} + \bs{f} \quad \text{in } \Omega, \nonumber
\label{forced NS eqn}
\end{eqnarray}
where $\Omega$ is a bounded domain with smooth boundaries and $\nu$ is the viscosity of the fluid. We assume that the fluid satisfies a no-slip boundary condition on the surface, i.e., $\left. \bs{u}\right|_{\partial \Omega} = \bs{0}$. In this mathematical formulation of the problem, the question of interest now involves finding  the optimal design for the force $\bs{f}$ that maximizes the heat transfer with a restricted mean power supply $\mathscr{P}^\ast$. Denoting the volume average and the long-time volume average, respectively, as 
\begin{eqnarray}
\dashint_{\Omega} [\;\cdot\;] \; \textrm{d}\bs{x} = \frac{1}{|\Omega|}\int_{\Omega} [\;\cdot\;] \; \textrm{d}\bs{x} \qquad \text{and} \qquad \langle[\;\cdot\;]\rangle = \limsup_{\tau \to \infty} \frac{1}{\tau} \int_{0}^\tau \dashint_{\Omega} [\;\cdot\;] \; \textrm{d}\bs{x} \textrm{d}t, \nonumber
\end{eqnarray}
we can express the constraint on the mean power supply as $\langle \bs{f} \cdot \bs{u} \rangle \leq \mathscr{P}^\ast$.

 Assuming the velocity field stays smooth and the kinetic energy of flow stays bounded in time, then the long-time spatial average of the energy equation leads to $$\langle \bs{f} \cdot \bs{u} \rangle = \nu \langle |\nabla \bs{u}|^2 \rangle.$$ Physically, this means that the work done by the force $\bs{f}$ to move the fluid is eventually dissipated viscously. It also shows that instead fixing the power supply, one can equivalently impose a constraint on the enstrophy of the flow, i.e., $\langle |\nabla \bs{u}|^2 \rangle \leq \nu^{-1} \mathscr{P}^\ast$. 
 
 The advantage of formulating the constraint in terms of the enstrophy is that we can from here on ignore the momentum equation entirely. We can simply ask, what is the flow $\bs{u}$ that maximizes the heat transfer, for a given bound on the enstrophy ($\langle |\nabla \bs{u}|^2 \rangle \leq \nu^{-1} \mathscr{P}^\ast$). Once that flow $\bs{u}$ is found, the corresponding forcing $\bs{f}$ can then be computed from (\ref{forced NS eqn}).  Whether the optimal flow $\bs{u}$ obtained in this manner is dynamically stable is a separate question that we will not address in this paper.
 
 Beyond the primary engineering motivation, the optimal heat transport problem considered in this paper is also inspired by two problems: (1) anomalous dissipation in a passive scalar, (2) Rayleigh--B\'enard convection. These problems, and their relationship with the optimal transport problem investigated here, will be discussed in section \ref{Discussion}. 

\subsection{Problem setup}
Although the problem discussed above is very general,  we now focus on a special case in the simplest possible geometry namely the transport of a passive scalar $T$ (which we refer to as temperature) by a flow field $\bs{u}$ between two parallel walls held at different constant values of $T$. We assume that the flow field $\bs{u}$ is incompressible and satisfies no-slip boundary conditions at the walls, which in the wall-normal coordinates are located at $z = -H/2$ and $z = H/2$, where $H$ denotes the distance between the walls. The temperature field evolves according to the advection-diffusion equation 
\begin{eqnarray}
\partial_t T + \bs{u} \cdot \nabla T - \kappa \Delta T = 0, 
\label{Introduction: time dependent advection-diffusion equation}
\end{eqnarray}
 and satisfies Dirichlet boundary conditions
\begin{eqnarray}
T = T_B \; \text{ at } \; z = -H/2, \quad T = T_T \; \text{ at } \; z = H/2. 
\label{Introduction: temp bcs}
\end{eqnarray}
In (\ref{Introduction: time dependent advection-diffusion equation}), $\kappa$ is the thermal diffusivity, and without loss of generality, we choose $T_B > T_T$ in (\ref{Introduction: temp bcs}). For simplicity, we consider the horizontal directions $x$ and $y$ to be periodic with length $l_x$ and $l_y$. The domain of interest is thus given by $\Omega \coloneqq \mathbb{T}_{l_x} \times \mathbb{T}_{l_y} \times (-H/2, H/2).$

For a given flow field $\bs{u}$, we define the corresponding rate of heat transfer as 
\begin{eqnarray}
Q(\bs{u}) \coloneqq \left\langle u_z T - \kappa \frac{\partial T}{\partial z} \right\rangle = \langle u_z T \rangle + \frac{\kappa (T_B - T_T)}{H}. \nonumber
\label{Introduction: Heat transfer unsteady}
\end{eqnarray}
By performing the long-time horizontal average of equation (\ref{Introduction: time dependent advection-diffusion equation}), one can show that $Q(\bs{u})$ is equal to the heat flux at the top or the bottom boundary, hence the definition. Furthermore, by multiplying (\ref{Introduction: time dependent advection-diffusion equation}) with $T$ and performing the long-time volume average, one can alternatively express the rate of heat transfer $Q(\bs{u})$ as
\begin{eqnarray}
Q(\bs{u}) = \frac{\kappa H}{T_B - T_T} \langle |\nabla T|^2 \rangle. \nonumber
\label{Introduction: Q for time indep eq}
\end{eqnarray}

The question of optimal heat transport described in the previous subsection seeks to find the maximum possible value of the heat transfer over all incompressible flow fields satisfying the no-slip boundary condition and the enstrophy constraint:
\begin{eqnarray}
Q_{\max}(\mathscr{P}^\ast) \coloneqq \sup_{\substack{\bs{u}(t, \bs{x}) \\ \nabla \cdot \bs{u} = 0, \; \left. \bs{u} \right|_{\partial \Omega} = \bs{0} \\ \langle |\nabla \bs{u}|^2 \rangle \leq \nu^{-1} \mathscr{P}^\ast}} Q(\bs{u}). \nonumber
\end{eqnarray}

Before we proceed further, we nondimensionalize the problem by making the following transformations, respectively, for the position, time, velocity field, temperature and the heat transfer:
\begin{eqnarray}
\bs{x} \to H \bs{x}, \quad t \to \frac{H^2}{\kappa} t, \quad \bs{u} \to \frac{\kappa}{H} \bs{u}, \quad T \to (T_B - T_T) T + T_T, \quad Q \to \frac{\kappa (T_B - T_T)}{H}Q.
\label{nondim scales}
\end{eqnarray}
We continue to denote the nondimensional horizontal periodic lengths with $l_x$ and $l_y$. After the rescaling (\ref{nondim scales}), a single nondimensional parameter remains, namely the nondimensional power given by 
$$\mathscr{P} = \mathscr{P}^\ast \frac{H^4}{\nu \kappa^2}$$
which can be increased by either increasing the dimensional power supply $\mathscr{P}^\ast$ and the domain size $H$ or by decreasing the thermal diffusivity $\kappa$ and viscosity $\nu$ of the fluid. After the nondimensionalization the enstrophy constraint becomes $\langle |\nabla \bs{u}|^2 \rangle \leq \mathscr{P}$.

As the problem of optimal heat transport can be considered both in two and three dimensions, let us define
\addtocounter{equation}{1}
\begin{align}
\Omega^{\, 2D} \coloneqq \mathbb{T}_{l_x} \times (-1/2, 1/2), \qquad \qquad \Omega^{\, 3D} \coloneqq \mathbb{T}_{l_x} \times \mathbb{T}_{l_x} \times (-1/2, 1/2).
\tag{\theequation a-b}
\label{2D 3D domain}
\end{align}
In the introduction, $\Omega$ is used to mean either $\Omega^{2D}$ or $\Omega^{3D}$ except in places where the distinction is required, in which case we will make the reference explicit. In rest of the paper $\Omega$ will only mean $\Omega^{3D}$. Without the loss of generality, we assume that the aspect ratio of the domain satisfies $l_x \leq l_y$. Next, we explicitly formulate the steady, followed by the unsteady version of the optimal heat transport problem.

\subsubsection{Steady case}
In the steady case, we seek
\begin{eqnarray}
Q^s_{\max}(\mathscr{P}) = \sup_{\substack{\bs{u} \in L^\infty(\Omega) \\ \nabla \cdot \bs{u} = 0, \; \left. \bs{u} \right|_{\partial \Omega} = \bs{0} \\ \dashint_{\Omega} |\nabla \bs{u}|^2 \, {\rm d} \bs{x} \leq \mathscr{P}}} Q(\bs{u}) \qquad \qquad \qquad \text{where} \qquad Q(\bs{u}) = \dashint_{\Omega} |\nabla T|^2 \, {\rm d} \bs{x}, 
\label{Qmax steady advec-diff}
\end{eqnarray}
and $T$ solves the steady advection-diffusion equation with Dirichlet boundary conditions
\begin{eqnarray}
\begin{rcases}
\qquad \qquad \bs{u} \cdot \nabla T - \Delta T = 0,  \\
T = 1 \; \text{ at } \; z = -1/2, \quad T = 0 \; \text{ at } \; z = 1/2. \quad 
\end{rcases}
\label{steady advec-diff}
\end{eqnarray}

\subsubsection{Unsteady case}
In the unsteady case, we seek
\begin{eqnarray}
Q^u_{\max}(\mathscr{P}) =\sup_{\substack{\bs{u} \in L^\infty([0, \infty) \times \Omega) \\ \nabla \cdot \bs{u} = 0, \; \left. \bs{u} \right|_{\partial \Omega} = \bs{0} \\ \langle |\nabla \bs{u}|^2 \rangle \leq \mathscr{P}}} Q(\bs{u}) \qquad \qquad \qquad \text{where} \qquad Q(\bs{u}) = \langle |\nabla T|^2 \rangle, 
\label{Qmax unsteady advec-diff}
\end{eqnarray}
and $T$ solves the unsteady advection-diffusion equation with Dirichlet boundary conditions
\begin{eqnarray}
\begin{rcases}
\qquad \qquad \qquad \partial_t T + \bs{u} \cdot \nabla T - \Delta T = 0,  \\
\qquad \qquad \qquad T = T_0 \in L^2(\Omega) \; \text{ at } \; t = 0, \\
T = 1 \; \text{ at } \; z = -1/2, \quad T = 0 \; \text{ at } \; z = 1/2 \quad \forall \; t \in (0, \infty). \quad 
\end{rcases}
\label{unsteady advec-diff}
\end{eqnarray}

\begin{remark}
It is clear that for every $\mathscr{P}$, we have the inequality $Q^s_{\max} \leq Q^u_{\max}$. Therefore, any upper bound on $Q^u_{\max}$ provides an upper bound on $Q^s_{\max}$. Similarly, any lower bound on $Q^s_{\max}$ is a lower bound on $Q^u_{\max}$ as well.
\label{rel Qs Qu}
\end{remark}
\begin{remark}
The values of $Q_{\max}^s$ and $Q_{\max}^u$ for the three-dimensional problem are larger than their corresponding values for the two-dimensional problem. This is because any two-dimensional solution of the advection diffusion equation is also a solution in three-dimensions by an extension that is invariant in the third direction.
\end{remark}
\begin{remark}
 In the unsteady case, the quantity $Q(\bs{u})$ does not depend on the initial condition $T_0$ as long as this initial condition belongs to $L^2(\Omega)$. Physically, this means that the dependence of the solution $T$ on the initial data is lost at long times because of the presence of diffusion.
\end{remark}
    
For both the steady and unsteady cases, in their two- and three-dimensional versions, the questions of prime importance are:
\begin{enumerate}[label=\textbf{(\Alph*)}]
\item How the maximum heat fluxes $Q_{\max}^s$ and $Q_{\max}^u$ scale  as a function of the input power $\mathscr{P}$ for asymptotically large values of $\mathscr{P}$? 
\item What does the structure of the flow fields that transfer heat ``most efficiently'' look like?
\end{enumerate}

In this paper, we investigate these questions for the steady case only. The unsteady case is also of great importance and will be considered in a future study. 

\subsection{Previous work and the present results}
The problem of optimal heat transport between parallel walls, as described above, was first introduced in the work of Hassanzadeh, Chini and Doering \cite{hassanzadeh2014wall} whose motivation was to improve previously known upper bounds on heat transfer in porous medium convection \cite{doering98porus} and Rayleigh--B\'enard convection \cite{doering1996variational, Plasting03Couette, Whitehead11Ultimate, Wen15timestepping}. Using numerical and matched asymptotic techniques, they studied the problem in two dimensions, applying either the energy or enstrophy constraints on the velocity field which satisfies stress-free boundary conditions.

Their initial investigation has since inspired several studies of optimal heat transport between differentially heated plates \cite{tobasco2017optimal, motoki2018optimal, doering2019optimal, souza2020wall} and slightly different problem of optimal cooling of a fluid subjected to a given volumetric heating \cite{marcotte2018optimal, iyer2021bounds, tobasco2021optimal}. Of all these studies, the three of particular interest to the current paper are \cite{tobasco2017optimal}, \cite{motoki2018optimal} and \cite{doering2019optimal}. They all investigate the same problem considered in this paper, i.e., optimal heat transport between parallel boundaries by incompressible flows satisfying no-slip boundary conditions with an enstrophy constraint. Doering \& Tobasco (\cite{doering2019optimal}) derived an upper bound on the maximum possible heat transfer, and showed that any flow satisfying the required constraint cannot transport heat faster than a rate proportional to the enstrophy to the power of $1/3$, i.e.,
\begin{eqnarray}
Q^u_{\max}(\mathscr{P}) \leq C^\prime \mathscr{P}^{1/3} \qquad \text{for} \quad \mathscr{P} \geq C^{\prime \prime},
\label{upper bound Qu}
\end{eqnarray}
where $C^\prime$ is a universal constant but $C^{\prime \prime}$ depends on the aspect ratio. This upper bound is valid both in two and three dimensions and applies to $Q^s_{\max}(\mathscr{P})$ as well (see Remark \ref{rel Qs Qu}).
The same bound had been proved before in the context of Rayleigh--B\'enard convection in at least three different ways: using the variational principle of Howard (\cite{MR159530, busse1969howard}), the background method of Doering \& Constantin (\cite{ doering1996variational, Plasting03Couette}) and more recently by Seis (\cite{ seis2015bound}).

Complimentary to their upper bound, \cite{tobasco2017optimal} and \cite{doering2019optimal} constructed two-dimensional steady branching flows (in which the flow structures have increasingly fine scales as one approaches the boundary, see figure \ref{conve rolls 2D steady branching b}) and showed that the upper bound could be attained up to an unknown logarithmic correction. More specifically, they showed 
\begin{eqnarray}
\frac{\mathscr{P}^{1/3}}{\log^{4/3} \mathscr{P}} \lesssim Q^s_{\max}(\mathscr{P}). \nonumber
\label{Strategy: DT bound lower}
\end{eqnarray}
Soon after the work of \cite{tobasco2017optimal}, Motoki, Kawahara and Shimizu  \cite{motoki2018optimal}, through a numerical optimization procedure, discovered complicated but rather beautiful three-dimensional steady branching flows (depending on $\mathscr{P}$) that appears to display a heat transfer rate $Q^s_{\max}(\mathscr{P}) \sim \mathscr{P}^{1/3}$.

In this paper, we rigorously show that $\mathscr{P}^{1/3} \lesssim Q^s_{\max}(\mathscr{P})$ by constructing three-dimensional steady branching pipe flows. Our main result is:
\begin{theorem}[Steady three-dimensional case]
\label{Main theorem steady case} Let $\Omega$ be $\Omega^{\, 3D}$ as defined in (\ref{2D 3D domain}). Then there exists two positive constants $\mathscr{P}_0$ and $C$ such that $Q^s_{\max}$, as defined in (\ref{Qmax steady advec-diff}), obeys the following lower bound: $$C \mathscr{P}^{1/3} \leq Q^s_{\max}(\mathscr{P})$$ for
$\mathscr{P}_0 \leq \mathscr{P}.$ The constants  $\mathscr{P}_0$ and $C$ depends on $l_x$ as follows:
\begin{eqnarray}
\mathscr{P}_0(l_x) = \frac{1 + l_x^2}{l_x^2} \mathscr{P}^\prime_0, \quad C(l_x) = \frac{l_x^{8/3}}{1+l_x^{8/3}} C^\prime, \nonumber
\end{eqnarray}
where $\mathscr{P}_0^\prime, C^\prime > 0$ are two universal constants.
\end{theorem}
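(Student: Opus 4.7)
The goal is to produce an incompressible velocity field $\bs{u}$ on $\Omega^{3D}$ vanishing on the top and bottom walls, with $\dashint_\Omega |\nabla \bs{u}|^2\, \mathrm{d}\bs{x} \lesssim \mathscr{P}$, such that the steady solution $T$ of (\ref{steady advec-diff}) has Dirichlet energy $\dashint |\nabla T|^2 \, \mathrm{d}\bs{x} \gtrsim \mathscr{P}^{1/3}$. Since any such $\bs{u}$ is admissible in the supremum defining $Q^s_{\max}$, this will establish the theorem. The absence of a logarithmic correction relative to the two-dimensional constructions of \cite{tobasco2017optimal, doering2019optimal} is expected to come from the extra transverse freedom in three dimensions: a single vertical channel can be split into many smaller channels arranged in a planar pattern, without the stream-function crossing constraints that in two dimensions force every scale in the hierarchy to contribute the same order of enstrophy.

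The flow I would build is a self-similar branching pipe network modelled on \cite{motoki2018optimal}. Fix a branching factor $N \in \mathbb{N}$ and a number of levels $K$, to be tuned to $\mathscr{P}$. Near the cold wall $z = 1/2$, start at the coarsest level with a periodic horizontal array of "updraft" and "downdraft" pipes of lateral scale $r_0 \sim 1$ and vertical extent $\ell_0 \sim 1/4$. As $z \to 1/2$ each pipe splits into $N$ children of smaller radius $r_{k+1} < r_k$ and shorter length $\ell_{k+1} < \ell_k$, chosen geometrically. Reflect the construction at the hot wall and connect the two halves through a uniform core. Inside each pipe I would define $\bs{u}$ as a smooth vertically-directed axisymmetric profile (e.g.\ Hagen--Poiseuille-like) supported strictly inside the pipe, so that no-slip on the pipe wall is automatic; inside the transition region between a parent and its $N$ children, $\bs{u}$ is defined as the curl of a vector potential $\bs{A}$ obtained via a smooth partition-of-unity interpolation of the parent and children stream potentials, so that $\nabla \cdot \bs{u} = 0$ is built in. In the finest layer adjacent to each wall, a shear profile brings $\bs{u}$ smoothly to zero at $z = \pm 1/2$.

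With the velocity in hand, I would estimate $\dashint |\nabla \bs{u}|^2\, \mathrm{d}\bs{x}$ by summing contributions from pipe interiors, junctions, and the wall shear layer. To lower-bound $Q(\bs{u})$, rather than solving (\ref{steady advec-diff}) exactly, I would introduce an explicit target field $T_\ast$ which is close to $1$ in updraft pipes and $0$ in downdraft pipes, transitioning only inside junctions and in a diffusive boundary layer of thickness $\delta \sim 1/U$ at each wall, and then decompose $T = T_\ast + \theta$. Testing the equation for $\theta$ against itself produces an $H^1$ bound on $\theta$ driven by the residual $\bs{u}\cdot \nabla T_\ast - \Delta T_\ast$, which is supported only in those controlled transition regions; a Poincaré inequality inside each pipe then converts this into the estimate $Q(\bs{u}) \gtrsim 1/\delta$ up to lower-order corrections. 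Finally, balancing the enstrophy budget against the matching condition $U\delta \sim 1$ at the innermost scale and choosing $K = K(\mathscr{P})$ so that $\mathscr{P} \sim Q^3$ yields $Q \gtrsim \mathscr{P}^{1/3}$. The aspect-ratio factors $\mathscr{P}_0(l_x)$ and $C(l_x)$ emerge from requiring at least one full convection cell of lateral scale $\sim l_x$ to fit in the domain and from how $l_x$ truncates the coarsest level of the hierarchy.

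The principal technical obstacle is the junction construction: one cannot simply concatenate parent and child profiles, because the resulting field must be smooth, exactly solenoidal, and have $|\nabla \bs{u}|$ controlled by the local scale $U/r$ rather than by some much larger jump. I would handle this by working throughout with vector potentials and compactly supported smooth cutoffs, so that $\nabla \cdot \bs{u} = \nabla \cdot (\nabla \times \bs{A}) = 0$ holds identically and the enstrophy of each junction is, up to a constant, that of a single pipe. A second delicate point is reconciling the no-slip condition on $z = \pm 1/2$ with the branching geometry at the finest scale, which forces the innermost shear layer to interpolate smoothly between the pipe profile and zero; its enstrophy cost has to be incorporated in the budget. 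The decisive place where three-dimensionality is used is in arranging the planar packing of each level so that the per-level enstrophy contribution does not grow geometrically in $k$ (as it must in two dimensions because of topological obstructions to branching of the stream function), but instead saturates at the finest scale in a controlled way — this is exactly what eliminates the logarithmic correction present in \cite{tobasco2017optimal, doering2019optimal}.
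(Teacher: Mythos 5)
Your geometric idea — a self-similar branching pipe network in three dimensions that exploits the extra transverse degree of freedom to avoid the fold-back obstruction that afflicts two-dimensional steady branching — is the right one, and it matches the paper's construction in spirit. Your suggestion to build the junctions via vector potentials and partitions of unity is a legitimate alternative to the paper's approach (mollified vector-valued Radon measures on a line-segment skeleton, plus a separate axisymmetric streamfunction in the reducer region); both produce smooth, exactly solenoidal fields with controlled enstrophy.

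The gap is in the analysis step. You propose to decompose $T = T_\ast + \theta$, test the equation for $\theta$ against $\theta$ to get an $H^1$ bound on $\theta$ driven by the residual $R = \bs{u}\cdot\nabla T_\ast - \Delta T_\ast$, and then conclude $Q \gtrsim 1/\delta$. But this estimate does not close. Testing against $\theta$ gives
\begin{equation*}
\int_\Omega |\nabla\theta|^2 = -\int_\Omega R\,\theta = -\int_\Omega \nabla T_\ast\cdot\nabla\theta \;+\; \int_\Omega T_\ast\,\bs{u}\cdot\nabla\theta,
\end{equation*}
and Young's inequality then yields only $\|\nabla\theta\|_{L^2} \lesssim \|\nabla T_\ast\|_{L^2} + \|T_\ast \bs{u}\|_{L^2}$. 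The first term is unavoidable because $\|\Delta T_\ast\|_{H^{-1}(\Omega)} = \|\nabla T_\ast\|_{L^2(\Omega)}$; the fact that $\Delta T_\ast$ is supported in thin transition shells does not make its $H^{-1}$ norm small. So $\|\nabla\theta\|$ can be of the same order as $\|\nabla T_\ast\|$, and neither $Q = \dashint_\Omega|\nabla(T_\ast+\theta)|^2$ nor $Q-1 = \dashint_\Omega u_z T_\ast + \dashint_\Omega u_z\theta$ acquires a useful lower bound from your decomposition. The appeal to a Poincaré inequality "inside each pipe" to get $Q\gtrsim 1/\delta$ is not justified, and I don't see a way to make it so.

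The paper avoids this by working with the Doering--Tobasco variational principle (Proposition \ref{The variational principle: main corollary}): for any admissible $\bs{u}$ and any $\xi\in H^1_0(\Omega)$,
\begin{equation*}
Q(\bs{u}) - 1 \;\geq\; \frac{\left(\dashint_\Omega u_z\,\xi\,\mathrm{d}\bs{x}\right)^2}{\dashint_\Omega|\nabla\xi|^2\,\mathrm{d}\bs{x} \;+\; \dashint_\Omega|\nabla\Delta^{-1}(\bs{u}\cdot\nabla\xi)|^2\,\mathrm{d}\bs{x}}.
\end{equation*}
This is the inequality you actually need, and it can be derived directly from the identity $Q-1 = \dashint|\nabla\theta_{\rm lin}|^2$ (with $\theta_{\rm lin} = T-(1/2-z)$) by testing the equation $\bs{u}\cdot\nabla\theta_{\rm lin} - \Delta\theta_{\rm lin} = u_z$ against the \emph{trial} field $\xi$ rather than against the error, then applying Cauchy--Schwarz. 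Testing against $\xi$ rather than $\theta$ is the decisive step that your proposal misses; it produces a denominator containing $\dashint|\nabla\xi|^2 + \dashint|\nabla\Delta^{-1}(\bs{u}\cdot\nabla\xi)|^2$ without a problematic cross term, and the second summand is what your $\xi$-constant-along-streamlines design makes small. Finally, note that to close the estimate you will also need a quantitative bound on the nonlocal term $\dashint|\nabla\Delta^{-1}(\bs{u}\cdot\nabla\xi)|^2$ in terms of the boundary-layer thickness, which is the content of Proposition \ref{Poisson's equation: inverse Laplace torus to D} and the whole of Section \ref{Section: An estimate solution of Poisson}; your sketch has no analogue of this step.
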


\begin{remark}
Combining the result of Theorem \ref{Main theorem steady case} with upper bound (\ref{upper bound Qu}) and Remark \ref{rel Qs Qu}, we fully characterize the exact behavior of maximum heat transfer in three dimensions. In particular, we have $Q^s_{\max} \sim \mathscr{P}^{1/3}$ (as a result $Q^u_{\max} \sim \mathscr{P}^{1/3}$) in three dimensions. Whether $Q^s_{\max} \sim \mathscr{P}^{1/3}$ in two dimensions as well is an open problem (see Conjecture \ref{weak conj}).
\end{remark}

\begin{remark}
It clear that if $l_x \geq 1$ then $\mathscr{P}_0$ and $C$ are bounded from below by two positive constants independent of $l_x$. Therefore, assuming $l_x \geq 1$, i.e., for sufficiently wide domains, we can also restate the above theorem where $\mathscr{P}_0$ and $C$ are two positive constants independent of any parameter.
\end{remark}

\begin{remark}
 We consider here a periodic setting in the $x$ and $y$ directions. As the flows that we construct to prove the theorem have a compact support in space, the theorem remains true if $\Omega$ is a closed box of size $l_x$ and $l_y$ with insulating and no-slip side walls. 
\end{remark}

\subsection{Overview and philosophy of the proof}
\subsubsection{The variational principle}
The proof of Theorem \ref{Main theorem steady case} relies on a variational principle for the heat transfer derived by \cite{doering2019optimal} which was inspired by the work in homogenization theory such as of \cite{Marco91effective, Fannjiang94enhanced} about estimating the effective diffusivity in a random or periodic array of vortices. To state the result, we start by defining two admissible sets:
\begin{subequations}
\begin{eqnarray}
&& \mathcal{A}^s \coloneqq L^\infty(\Omega; \mathbb{R}^3) \cap H_0^1(\Omega; \mathbb{R}^3), \label{admissible vel steady}  \\
&& \mathcal{X}^s \coloneqq H^1_0(\Omega). \label{admissible xi steady}
\end{eqnarray} 
\end{subequations}
For steady velocity fields, the variational principle associated with the maximization of heat transfer can be stated as
\begin{proposition}[\cite{doering2019optimal}]
\label{The variational principle: main corollary}
For $Q^s_{\max}$ given by (\ref{Qmax steady advec-diff}), we have
\begin{eqnarray}
\label{Q max P steady}
Q^s_{\max}(\mathscr{P}) - 1 =  \sup_{\substack{\bs{u} \in \mathcal{A}^s  \\ \nabla \cdot \bs{u} = 0 }} \sup_{\substack{\xi \in \mathcal{X}^s \\ \xi \not\equiv 0}} \frac{\left(\dashint_{\Omega} u_z \xi \, {\rm d} \bs{x} \right)^2}{\dashint_{\Omega} |\nabla \Delta^{-1} (\bs{u} \cdot \nabla \xi)|^2 \, {\rm d} \bs{x} + \frac{1}{\mathscr{P}} \dashint_{\Omega} |\nabla \bs{u}|^2 \, {\rm d} \bs{x} \dashint_{\Omega} |\nabla \xi|^2 \, {\rm d} \bs{x}}.
\label{steady var prin}
\end{eqnarray}
\end{proposition}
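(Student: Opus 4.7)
The plan is to reduce the statement to a Cauchy--Schwarz estimate applied to a shifted temperature field. First I would write $T = (1/2 - z) + \theta$ so that $\theta \in H^1_0(\Omega)$ satisfies the modified equation $\bs{u}\cdot\nabla\theta - \Delta\theta = u_z$. Multiplying by $\theta$ and using $\nabla\cdot\bs{u}=0$ together with the zero boundary trace causes the advective term to drop out, giving the equivalent expressions $Q(\bs{u}) - 1 = \dashint_\Omega |\nabla\theta|^2\,{\rm d}\bs{x} = \dashint_\Omega u_z\,\theta\,{\rm d}\bs{x}$. The central identity then comes from testing the $\theta$-equation against an arbitrary $\xi\in H^1_0(\Omega)$ and introducing $\phi := \Delta^{-1}(\bs{u}\cdot\nabla\xi) \in H^1_0(\Omega)$, which is well-defined since $\bs{u}\in L^\infty$ forces $\bs{u}\cdot\nabla\xi \in L^2$. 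Two integrations by parts yield $\dashint_\Omega \nabla\theta\cdot\nabla(\phi + \xi)\,{\rm d}\bs{x} = \dashint_\Omega u_z\,\xi\,{\rm d}\bs{x}$, and a further integration by parts using $\nabla\cdot\bs{u}=0$ and $\xi|_{\partial\Omega}=0$ shows that $\dashint_\Omega \nabla\phi\cdot\nabla\xi\,{\rm d}\bs{x}$ vanishes, so $\dashint_\Omega|\nabla(\phi+\xi)|^2 = \dashint_\Omega|\nabla\phi|^2 + \dashint_\Omega|\nabla\xi|^2$.

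For the direction $Q^s_{\max}(\mathscr{P}) - 1 \ge \sup$, given any divergence-free $\bs{u}\in\mathcal{A}^s$ and $\xi\in\mathcal{X}^s$, I would rescale $\tilde{\bs{u}} := \alpha\bs{u}$ with $\alpha^2 = \mathscr{P}/\dashint_\Omega|\nabla\bs{u}|^2\,{\rm d}\bs{x}$ so that $\tilde{\bs{u}}$ saturates the enstrophy constraint and is thus admissible. Applying the identity above to the temperature perturbation $\tilde\theta$ associated to $\tilde{\bs{u}}$ (for which $\tilde\phi = \alpha\phi$) and then Cauchy--Schwarz gives $Q(\tilde{\bs{u}}) - 1 \ge (\dashint u_z \xi)^2 / (\dashint|\nabla\phi|^2 + (1/\alpha^2)\dashint|\nabla\xi|^2)$, and substituting $1/\alpha^2 = \dashint|\nabla\bs{u}|^2/\mathscr{P}$ recovers exactly the right-hand integrand of (\ref{Q max P steady}). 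Taking the supremum over $(\bs{u},\xi)$ delivers this direction.

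The harder direction ``$\le$'' requires exhibiting, for each admissible $\bs{u}$ normalized so that $\dashint_\Omega|\nabla\bs{u}|^2 = \mathscr{P}$, a test field $\xi^*$ saturating Cauchy--Schwarz, i.e., one with $\nabla\theta$ parallel to $\nabla(\phi^* + \xi^*)$. The natural choice is to require $\phi^* + \xi^* = \theta$, which is equivalent to the non-self-adjoint elliptic problem $(\Delta + \bs{u}\cdot\nabla)\xi^* = \Delta\theta$ with $\xi^*|_{\partial\Omega} = 0$. Existence and uniqueness of $\xi^*\in H^1_0(\Omega)$ will follow from Lax--Milgram applied to the bilinear form $a(\xi,\eta) = \dashint_\Omega \nabla\xi\cdot\nabla\eta\,{\rm d}\bs{x} - \dashint_\Omega \eta(\bs{u}\cdot\nabla\xi)\,{\rm d}\bs{x}$, which is coercive with $a(\xi,\xi) = \dashint_\Omega |\nabla\xi|^2\,{\rm d}\bs{x}$ because the skew-adjoint advective contribution vanishes on the diagonal. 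With this $\xi^*$, one reads off from $\phi^* + \xi^* = \theta$ that the numerator $(\dashint u_z\xi^*)^2 = (\dashint|\nabla\theta|^2)^2 = (Q(\bs{u})-1)^2$ and the denominator $\dashint|\nabla\phi^*|^2 + \dashint|\nabla\xi^*|^2 = \dashint|\nabla\theta|^2 = Q(\bs{u})-1$, so that the integrand equals $Q(\bs{u})-1$; taking the supremum over admissible $\bs{u}$ then closes the gap. The main technical obstacle I anticipate is the Lax--Milgram step: verifying boundedness of the advective bilinear form using $\bs{u}\in\mathcal{A}^s\subset L^\infty(\Omega)$ and interpreting $\Delta\theta$ as an element of $H^{-1}(\Omega)$, so that the non-self-adjoint problem for $\xi^*$ is well posed.
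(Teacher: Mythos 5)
Your proof is correct, but it follows a genuinely different path from the one in Appendix~\ref{Derivation var prin}. Both start from the substitution $T = (1/2 - z) + \theta$ and the identity $Q(\bs{u}) - 1 = \dashint|\nabla\theta|^2 = \dashint u_z\theta$, and both ultimately identify the same extremal $\xi$. Beyond that, however, the mechanisms diverge. The paper \emph{posits} the coupled system ({\color{blue}A.3a--b}) for a pair $(\eta_0,\xi_0)$, checks that $\theta = \eta_0 + \xi_0$ with the two orthogonality relations $\int\nabla\eta_0\cdot\nabla\xi_0 = 0$ and $\int u_z\eta_0 = 0$, converts the resulting equality into the quadratic functional (\ref{heat eta xi rewrite}), and then identifies $\xi_0$ as its unique maximizer by strict concavity and the Euler--Lagrange equation, before homogenizing via the scalings $\xi\to s\xi$ and $\bs{u}\to(\mathscr{P}/\dashint|\nabla\bs{u}|^2)^{1/2}\bs{u}$. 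You instead derive the weak identity $\dashint\nabla\theta\cdot\nabla(\phi+\xi) = \dashint u_z\xi$ and the orthogonality $\dashint\nabla\phi\cdot\nabla\xi=0$ directly for \emph{arbitrary} $\xi\in H^1_0(\Omega)$, obtain the lower bound by Cauchy--Schwarz, and then produce the saturating $\xi^*$ by Lax--Milgram applied to the non-self-adjoint problem $(\Delta + \bs{u}\cdot\nabla)\xi^* = \Delta\theta$; this $\xi^*$ is exactly the paper's $\xi_0$ and $\phi^*$ its $\eta_0$. Your route has the advantage of furnishing existence and uniqueness of the optimal $\xi$ explicitly (the paper leaves solvability of ({\color{blue}A.3}) implicit, whereas your coercivity argument $a(\xi,\xi)=\dashint|\nabla\xi|^2$ settles it), while the paper's route lays bare the structural decomposition $\theta = \eta_0 + \xi_0$ and the symmetry between the two fields. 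One small point to tidy up: for the ``$\le$'' direction you phrase the argument for $\bs{u}$ normalized so that $\dashint|\nabla\bs{u}|^2 = \mathscr{P}$, but $Q^s_{\max}$ is a supremum over $\dashint|\nabla\bs{u}|^2 \le \mathscr{P}$ and it is not a priori obvious that the constraint is active; the fix is immediate, since $\dashint|\nabla\bs{u}|^2/\mathscr{P}\le 1$ only shrinks the denominator of the quotient, so plugging in $(\bs{u},\xi^*)$ already gives a value $\ge Q(\bs{u})-1$ for every admissible $\bs{u}$, normalized or not.
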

In (\ref{steady var prin}), $\Delta^{-1}$ denotes the inverse Laplacian operator in $\Omega$ corresponding to the homogeneous Dirichlet boundary conditions. For completeness, we provide a derivation of this variation principle in the Appendix \ref{Derivation var prin}. The derivation is taken from (\cite{doering2019optimal}).

From the variational principles (\ref{steady var prin}), we see that any choice of admissible velocity field $\bs{u}$ and scalar field $\xi$ provides a lower bound on the heat transfer. Our goal, therefore, is to find a ``good" flow field $\bs{u}$ (depending on $\mathscr{P}$), and a corresponding $\xi$, for which the dependence on $\mathscr{P}$ of the lower bound obtained matches that of the theoretical upper bound (\ref{upper bound Qu}), namely, $\mathscr{P}^{1/3}$.

We closely analyze each term involved in the variational principle (\ref{steady var prin}). We label them
\addtocounter{equation}{1}
\begin{align}
I = \left( \dashint_{\Omega} u_z \xi \, {\rm d} \bs{x} \right)^2, \qquad II = \dashint_{\Omega} |\nabla \Delta^{-1} (\bs{u} \cdot \nabla \xi)|^2 \, {\rm d} \bs{x}, \qquad III = \frac{1}{\mathscr{P}} \dashint_{\Omega} |\nabla \bs{u}|^2 \, {\rm d} \bs{x} \dashint_{\Omega} |\nabla \xi|^2 \, {\rm d} \bs{x},
\tag{\theequation a-c}
\label{terms var prin}
\end{align}
and identify them as the transport term ($I$), the nonlocal term ($II$) and the dissipation term ($III$), respectively.

 \begin{figure}
\centering
\begin{tabular}{lc}
\begin{subfigure}{0.5\textwidth}
\centering
 \includegraphics[scale = 0.6]{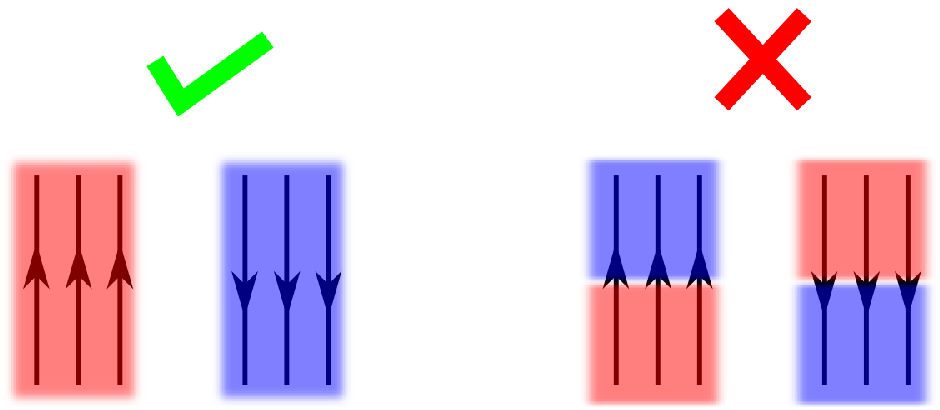}
\caption{}
\label{strategies I II a}
\end{subfigure} &
\begin{subfigure}{0.5\textwidth}
\centering
 \includegraphics[scale = 0.6]{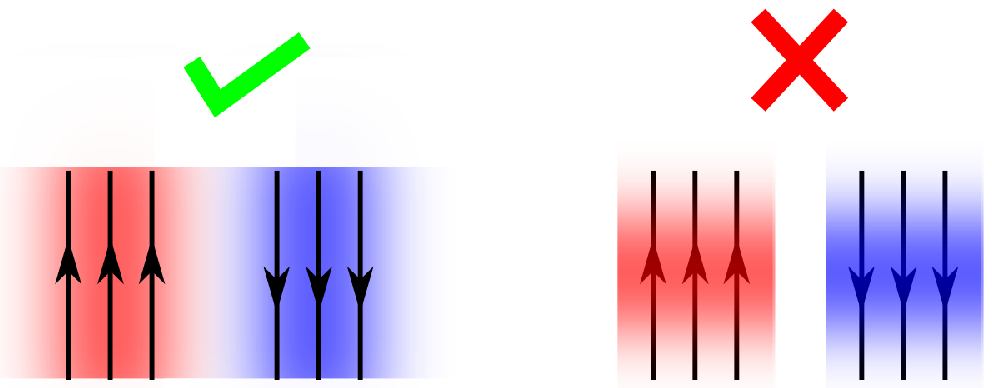}
\caption{}
\label{strategies I II b}
\end{subfigure}
\end{tabular}
\caption{Panel (a) illustrates good and bad strategies to maximize term $I$ defined in (\ref{terms var prin}). In the good scenario $\xi$ is positive (indicated by red color) where the flow is moving upward (positive $z$-direction) and is negative (blue color) where the flow is moving downward. Therefore, $u_z$ and $\xi$ are positively correlated. This is not the case in the bad scenario. Panel (b) illustrates  good and bad strategies to minimize term $II$. In the good scenario $\nabla \xi$ is perpendicular to $\bs{u}$, hence $\bs{u} \cdot \nabla \xi \equiv 0$, i.e., $\xi$ is constant along the streamlines and therefore the term $II$ zero. In the bad scenario $\nabla \xi$ is parallel to $\bs{u}$, so the term $II$ is nonzero.}
 \label{strategies I II}
\end{figure}

In order to obtain a good lower bound, we would ideally like to choose $\bs{u}$ and $\xi$ to maximize the right-hand side of (\ref{steady var prin}) as much as possible. This, in turn, means we should aim to maximize $I$ and minimize $II$ and $III$. 

To maximize $I$, we should choose a $\bs{u}$ such that its $z$-component is ``positively correlated'' with the $\xi$ field. Figure \ref{strategies I II a} shows examples of a good and bad scenario. To minimize $II$, we should make a choice such that $\bs{u}$ is perpendicular to $\nabla \xi$ in most of the domain, which can alternatively be stated as $\xi$ should be constant along the streamlines of the flow $\bs{u}$. Figure \ref{strategies I II b} shows examples of a good and bad scenario.

Our aim at this point is to provide heuristic but compelling arguments why trial velocity profiles such as (i) standard convection rolls and (ii) the two-dimensional steady branching flows considered by \cite{tobasco2017optimal, doering2019optimal} are not sufficient to prove Theorem \ref{Main theorem steady case}. By diligently inspecting the limitations of these trial flow fields, we are then naturally led to propose three-dimensional branching pipe flows as a remedy.

%Next, we consider two trial velocity fields: (i) simple convection rolls, (ii) the velocity field considered by \cite{tobasco2017optimal, doering2019optimal}. We try to understand the challenges faced by these flows in matching the upper bound on the heat transfer and then we naturally build-up to the flow constructed in this paper.

\subsubsection{Convection rolls}
The first choice of a trial velocity profile $\bs{u}$ that comes to mind is the one associated with planar convection rolls, as this is one of the simplest incompressible flow fields capable of transporting heat by advection. Figure \ref{conve rolls 2D steady branching a} shows the streamlines of typical convection rolls. In the bulk region, far from the horizontal walls, the flow either moves up or down. To maintain the incompressibility constraint, the flow must turn around in a boundary layer near the walls. We then select a $\xi$ field accordingly, in an attempt to maximize $I$ and minimize $II$ (see figure \ref{strategies I II}).

The advantage of this configuration is that it is possible to restrict the region where $\bs{u} \cdot \nabla \xi$ is non-zero (which eventually contributes toward $II$) to the boundary layer only. However, this choice turns out to be particularly bad with respect to term $III$. Indeed, assuming the flow velocity $\bs{u} \sim 1$ in the bulk region, then we must have a ``large'' fluid velocity $\bs{u} \sim \ell/\delta$ in the boundary layer because of the incompressibility condition, where $\ell$ is the aspect ratio of a single convection roll. Consequently, $\nabla \bs{u} \sim \ell/\delta^2$, which essentially becomes ``very large" for small boundary layer thickness $\delta$. Performing a formal scaling analysis of each individual terms in the variational principle (\ref{steady var prin}) leads to
\begin{eqnarray}
Q^{s}_{\max} \gtrsim \frac{1}{\delta + \frac{1}{\mathscr{P}} \left(\frac{\ell^2}{\delta^4} + \frac{1}{\ell^4}\right)}. \nonumber
\end{eqnarray}
The right-hand side is optimized by choosing $\delta \sim \mathscr{P}^{-3/11}$ and $\ell \sim \mathscr{P}^{-2/11}$, which leads to $$Q^{s}_{\max} \gtrsim \mathscr{P}^{3/11}.$$

This scaling recovers the result of \cite{souza2020wall}, who rigorously showed that $Q^{s}(\bs{u}) \sim \mathscr{P}^{3/11}$ for a particular choice of convective rolls, as well as the results of \cite{MR159530, doering1996variational} who found the same scaling in the context of Rayleigh--B\'enard convection. The exponent $3/11$ is clearly less than $1/3$ suggesting that the convection rolls may not be the most efficient way of transporting heat  at high $\mathscr{P}$. 
 \begin{figure}
\centering
\begin{tabular}{lc}
\begin{subfigure}{0.5\textwidth}
\centering
\includegraphics[scale = 0.55]{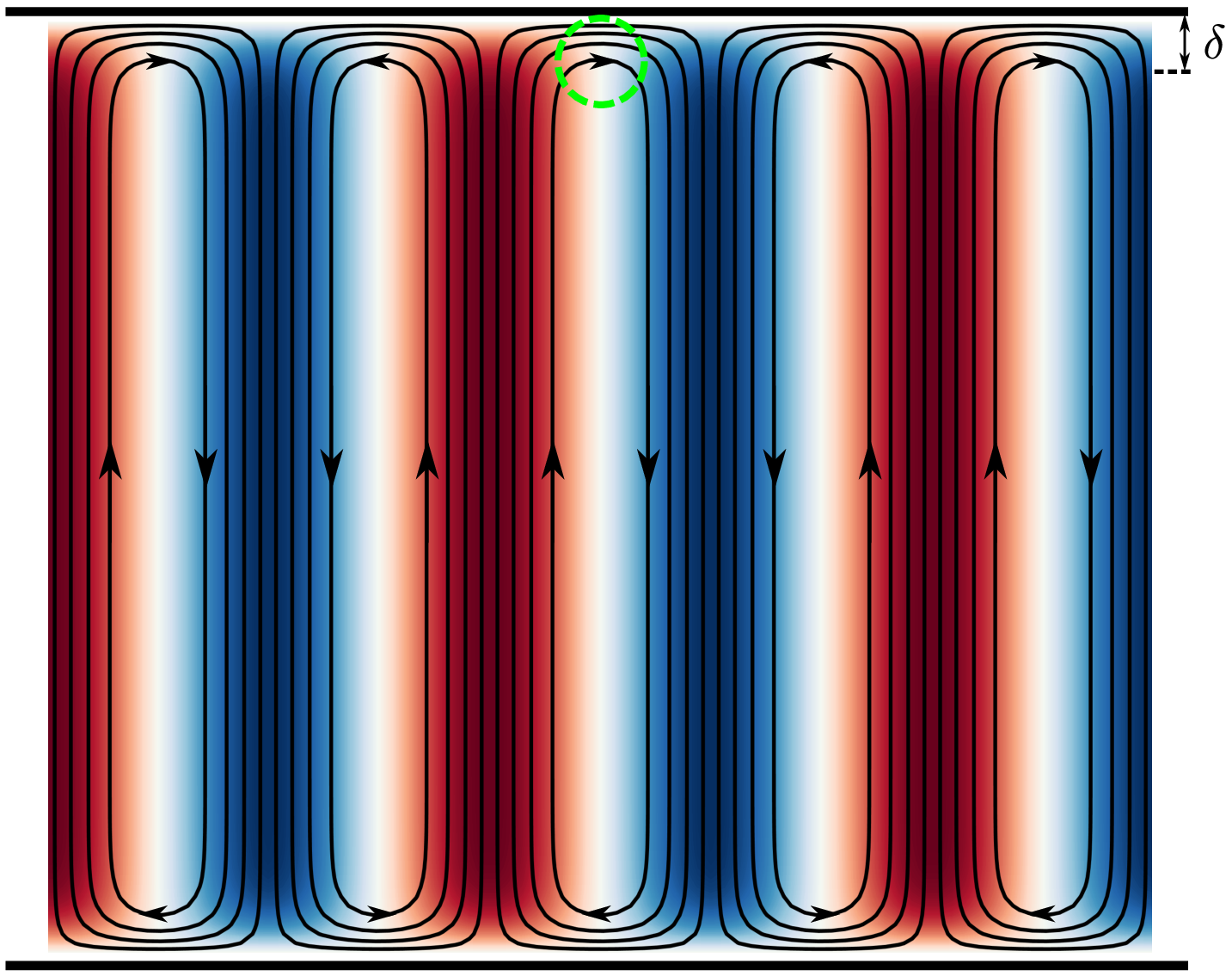}
\caption{}
\label{conve rolls 2D steady branching a}
\end{subfigure} &
\begin{subfigure}{0.5\textwidth}
\centering
 \includegraphics[scale = 0.55]{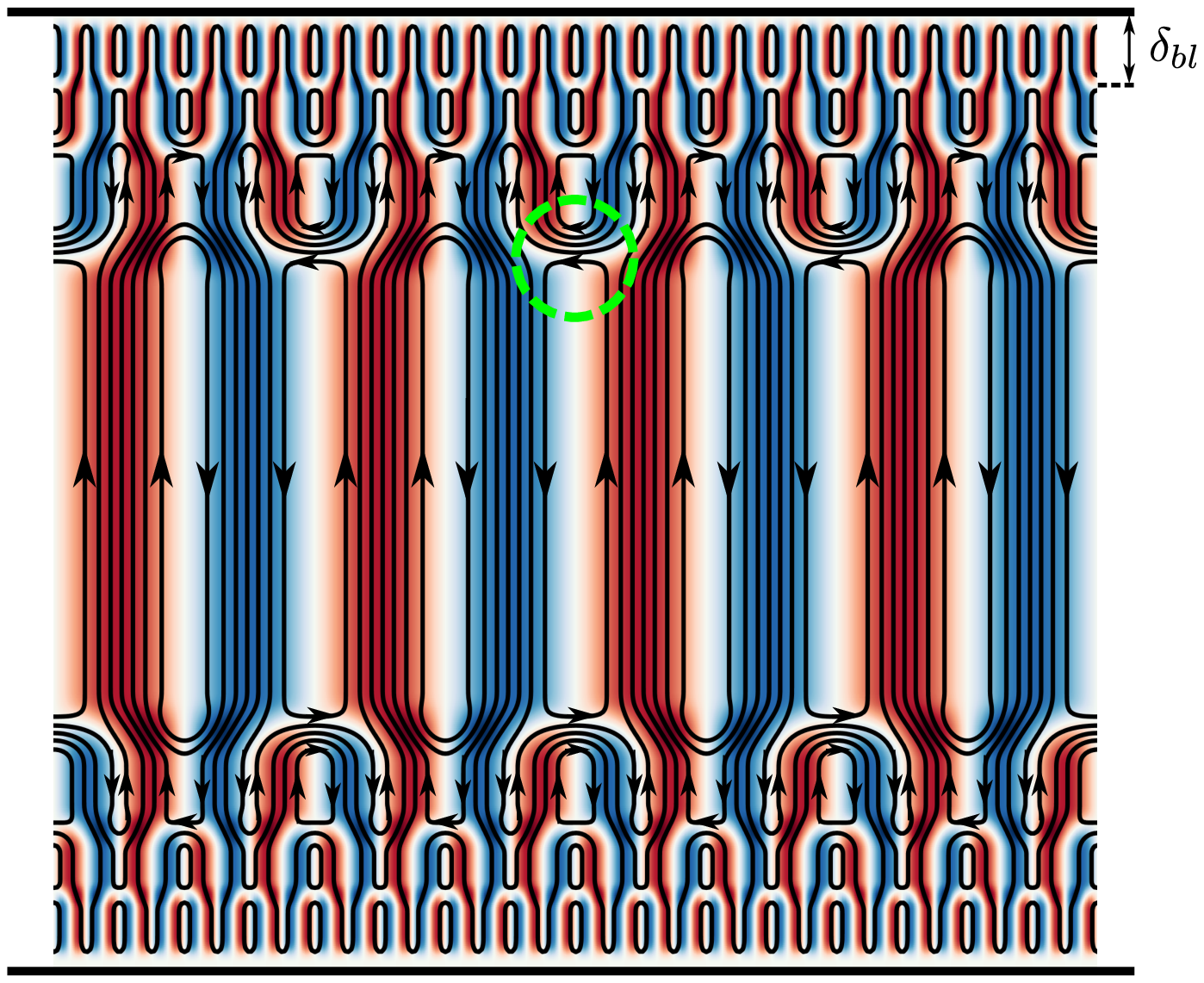}
\caption{}
\label{conve rolls 2D steady branching b}
\end{subfigure}
\end{tabular}
\caption{Panel (a) shows the streamlines of a set of typical convective rolls. Panel (b) shows the streamlines of a steady two dimensional branching flow. In both figures, the streamlines have been overlayed with a $\xi$ field according to the good scenario described in figure \ref{strategies I II a} (i.e. $\xi$ is positive whenever $u_z$ is positive, and $\xi$ is negative whenever $u_z$ is negative). The red color indicates a positive value of $\xi$, whereas the blue color indicates a negative value. The dashed circles in both the figures show a region where $\bs{u} \cdot \nabla \xi$ is nonzero.}
 \label{conve rolls 2D steady branching}
\end{figure}

\subsubsection{Two-dimensional steady branching flows}
One way to improve the heat transport is to consider a flow field with a branching structure, i.e., where the scale of flow structures becomes smaller (possibly in a self-similar manner) as one approaches the walls, an idea that goes back to Busse (\cite{busse1969howard}). The branching ends after a finite number of steps, which depends on $\mathscr{P}$. The idea behind the branching is to continue dividing the flow into ``multiple channels" as it moves towards the wall, which helps maintain the typical magnitude of the velocity field to be order unity throughout the domain. Assuming $\delta_{bl}$ is the vertical thickness of the last branching level (namely, the boundary layer), then for the branching flows we have $\nabla \bs{u} \sim \delta_{bl}^{-1}$ in the boundary layer, which is significantly smaller than $\nabla \bs{u} \sim \delta_{bl}^{-2}$ in the case of convection rolls. 

A replica of the two-dimensional steady branching flow structure constructed by \cite{tobasco2017optimal, doering2019optimal} is shown in figure \ref{conve rolls 2D steady branching b} and we have overlaid the streamlines with a $\xi$ field according to the good scenario shown in figure \ref{strategies I II a}.
Branching in two dimensions requires some part of the flow to fold back at every branching level. Although this solves the problem regarding the term $III$, it creates a different topological issue. From figure \ref{conve rolls 2D steady branching b} it becomes clear that $\nabla \xi$ is parallel to $\bs{u}$ not just in the boundary layer but also in the bulk at every branching level. A typical region is shown in dashed circle in figure \ref{conve rolls 2D steady branching b}. The result is that $\bs{u} \cdot \nabla \xi$ is nonzero (which ultimately contributes towards the term $II$) in a significant portion of the domain compared with the case of convection rolls where this term was nonzero only in the boundary layer. Furthermore, there does not appear to be a way around this topological obstruction by simply choosing a different $\xi$ field. This is because the streamlines of the flow $\bs{u}$ continually fold back throughout the branching structure, from the bulk to the boundary layer, (see figure \ref{conve rolls 2D steady branching b}) and leaving only very few streamlines to continue towards the boundary layer. So it appears that if we follow the good strategy in figure \ref{strategies I II a} (which is to choose $\xi$ positive where $u_z$ is positive and vice-versa) then it is impossible to maintain $\xi$ constant along streamlines (even in the bulk), hence we pay towards term $II$. If we avoid the good strategy in figure \ref{strategies I II a}, then it is not possible to make the term $I$ ``large.'' However, it turns out the situation is still much better than the convection rolls and a formal scaling analysis (see \cite{doering2019optimal}) shows
\begin{eqnarray}
Q^s_{\max} \gtrsim \frac{1}{\ell_{bl} + \int_{\frac{1}{2}}^{1-\delta_{bl}} (\ell^\prime)^2 \, {\rm d} z  + \frac{1}{\mathscr{P}} \left(\frac{1}{\ell^2_{bulk}} + \int_{\frac{1}{2}}^{1-\delta_{bl}} \frac{1}{\ell^2} \, {\rm d} z + \frac{1}{\ell_{bl}} \right)^2}, \nonumber
\end{eqnarray}
where $\ell_{bulk}$ and $\ell_{bl}$ denotes the horizontal aspect ratio of a typical roll in the bulk region and in the boundary layer region, respectively, while the function $\ell(z)$ denotes how the aspect ratio changes as a function of the $z$ coordinate. After optimizing the unknown parameters ($\delta_{bl}, l_{bulk}, l_{bl}$ and $l(z)$), one can only show $$Q^s_{\max} \gtrsim \frac{\mathscr{P}^{1/3}}{\log^{4/3} \mathscr{P}},$$
which is result of \cite{tobasco2017optimal, doering1996variational}

Whether there exists a two-dimensional steady flow that can overcome the topological obstruction elaborated above to show $\mathscr{P}^{1/3} \lesssim Q^s_{\max}$ is an open question. Based on the heuristic reasons given previously, we believe that there are no such flows and therefore conjecture the following.
\begin{conjecture}[Weak]
\label{weak conj}
Let $\Omega$ be $\Omega^{2D}$ given by (\ref{2D 3D domain}). Then the heat transfer defined in (\ref{Qmax steady advec-diff}) obeys $$Q^s_{\max} = o(\mathscr{P}^{1/3})$$ for large $\mathscr{P}$, where `$o$' denotes the little-o.
\end{conjecture}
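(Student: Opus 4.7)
The plan is to exploit the topological rigidity of two-dimensional incompressible flow via the stream function, turning the heuristic obstruction illustrated in figure \ref{conve rolls 2D steady branching b} into a quantitative estimate. Any $\bs{u}$ with $\nabla \cdot \bs{u} = 0$ on $\Omega^{2D}$ can be written as $\bs{u} = (-\partial_z \psi, \partial_x \psi)$ for some stream function $\psi$, with the no-slip condition translating into $\psi$ being constant and having vanishing normal derivative on each component of $\partial \Omega^{2D}$. The enstrophy constraint becomes $\dashint |\nabla^2 \psi|^2 \, {\rm d} \bs{x} \lesssim \mathscr{P}$, and the streamlines of $\bs{u}$ coincide with the level sets of $\psi$ --- the structural fact that makes the 2D problem rigid in a way the 3D problem is not.

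The central observation that would drive the proof is the following rigid identity. For any $\xi \in \mathcal{X}^s$ that depends on $\bs{x}$ only through $\psi$, say $\xi = F(\psi)$ with $F \in C^1$, we have $\bs{u} \cdot \nabla \xi = F'(\psi)\, \bs{u} \cdot \nabla \psi \equiv 0$, so the nonlocal term $II$ in (\ref{terms var prin}) vanishes; but simultaneously the transport term $I$ vanishes too, since $u_z = \partial_x \psi$ gives
\begin{eqnarray*}
\dashint_{\Omega^{2D}} u_z \, F(\psi) \, {\rm d} \bs{x} \; = \; \dashint_{\Omega^{2D}} \partial_x G(\psi) \, {\rm d} \bs{x} \; = \; 0
\end{eqnarray*}
by horizontal periodicity, where $G' = F$. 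Thus, in two dimensions one cannot simultaneously make the nonlocal term vanish and the transport term nonzero --- any useful $\xi$ must deviate from being a function of $\psi$, and any such deviation is immediately seen by term $II$. This is the quantitative version of the topological obstruction described in the introduction.

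To convert this rigidity into a genuine upper bound, I would fix an admissible $\bs{u}$ and analyze the supremum over $\xi$ in the variational principle (\ref{steady var prin}). Decompose $\xi = F(\psi) + \eta$, where $F(\psi)$ is the $L^2$-orthogonal projection of $\xi$ onto functions of $\psi$ (well-defined via the coarea formula over regular level sets of $\psi$). Then $\dashint u_z \xi \, {\rm d} \bs{x} = \dashint u_z \, \eta \, {\rm d} \bs{x}$ and $\bs{u} \cdot \nabla \xi = \bs{u} \cdot \nabla \eta$. The goal is to establish a Poincar\'e-type inequality $\|\bs{u} \cdot \nabla \eta\|_{H^{-1}(\Omega^{2D})} \gtrsim c(\psi)\, \|\eta\|_{L^2(\Omega^{2D})}$ on the orthogonal complement of functions of $\psi$, with $c(\psi)$ controlled by the enstrophy through the typical travel time around streamlines. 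Combining this with a sharp control on $\dashint u_z \eta$ and optimizing in $\bs{u}$ should yield $Q^s_{\max} - 1 = o(\mathscr{P}^{1/3})$.

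The hard part is establishing this Poincar\'e-type inequality with the correct $\mathscr{P}$-dependence, uniformly over enstrophy-constrained stream functions. One must show that the rate at which $\eta$ can oscillate transverse to the streamlines of $\psi$ is controlled by the enstrophy of $\bs{u}$ --- this is where all the branching topology is implicitly encoded. A subtlety is that $\psi$ may carry many saddle points accumulating near the wall, so the argument likely requires a stratification based on the Reeb graph of $\psi$, together with a separate boundary-layer analysis where $\bs{u}$ itself is small. Since the conjecture only asks for a little-$o$ correction, a first attempt would be to extract any sublogarithmic improvement over the Doering--Tobasco construction and refine from there; a sharp $1/\log^{4/3}\mathscr{P}$ matching upper bound would require a much finer level-set analysis and is the main reason this is stated only as the \emph{weak} conjecture.
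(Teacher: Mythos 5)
This statement is labeled \emph{Conjecture} in the paper precisely because the paper does not prove it; it is posed as an open problem, so there is no ``paper's own proof'' to compare against. Your proposal therefore cannot be vetted by comparison --- it must stand on its own as a proof, and it does not. You say as much yourself: the crux you call ``the hard part'' is exactly the entire content of the conjecture, and your sketch does not supply it.

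To be fair to what is good here: the identity you single out --- that if $\xi = F(\psi)$ then $\bs{u}\cdot\nabla\xi \equiv 0$ while also $\dashint_{\Omega^{2D}} u_z F(\psi)\,{\rm d}\bs{x} = \dashint_{\Omega^{2D}} \partial_x G(\psi)\,{\rm d}\bs{x} = 0$ by periodicity --- is correct and is a clean, quantitative way of packaging the ``topological obstruction'' that the paper discusses only heuristically. That observation could plausibly be the right starting point for a proof, and it is a genuine improvement on a purely pictorial argument. The decomposition $\xi = F(\psi) + \eta$ with $\eta$ orthogonal to the $\sigma$-algebra generated by $\psi$ is also the natural next move.

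The gap is everything after that. The ``Poincar\'e-type inequality'' $\|\bs{u}\cdot\nabla\eta\|_{H^{-1}(\Omega^{2D})} \gtrsim c(\psi)\,\|\eta\|_{L^2(\Omega^{2D})}$ is not merely unproved; you do not state what $c(\psi)$ should be, nor how it depends on the enstrophy, nor why it should degenerate slowly enough to force $o(\mathscr{P}^{1/3})$ after optimizing over $\bs{u}$. There are concrete reasons to worry that an inequality of exactly this form is false: near stagnation points and inside boundary layers, $\bs{u}$ can be very small on sets of nontrivial measure while $\eta$ is $O(1)$ there, so the left-hand side can be driven to zero without a matching smallness on the right; any true statement would need localization or weights tied to the Morse structure of $\psi$. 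There are further unaddressed issues: the $L^2$-orthogonal projection onto functions of $\psi$ need not land in $H^1_0(\Omega^{2D})$ when $\psi$ has a complicated Reeb graph (so the decomposition of $\xi$ may leave the admissible class $\mathcal{X}^s$); and the numerator estimate $\dashint u_z\eta$ must be sharper than Cauchy--Schwarz in $L^2$ to have any hope of beating $\mathscr{P}^{1/3}$, since the dissipation term $III$ alone already allows growth up to that rate. In short, you have re-derived the obstruction as an identity, which is useful, but the transition from ``obstruction'' to ``bound'' --- the quantitative stability estimate, uniform over enstrophy-constrained $\bs{u}$ --- is entirely missing, and that is the open problem.
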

The weak conjecture states that $Q^s_{\max}$ is asymptotically smaller than $\mathscr{P}^{1/3}$ but does not identify the correct asymptotic scaling of $Q^s_{\max}$ at large $\mathscr{P}$. It is reasonable to assume that the lower bound estimate of \cite{tobasco2017optimal, doering2019optimal} could be sharp. We therefore conjecture:
\begin{conjecture}[Strong]
\label{strong conj}
Let $\Omega$ be $\Omega^{2D}$ given by (\ref{2D 3D domain}). Then the heat transfer defined in (\ref{Qmax steady advec-diff}) obeys $$Q^s_{\max} \sim \frac{\mathscr{P}^{1/3}}{\log^{4/3} \mathscr{P}},$$ for large $\mathscr{P}$.
\end{conjecture}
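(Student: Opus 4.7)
The statement combines a lower and an upper bound of matching order, so a proof must handle both directions. The lower bound
\[
\frac{\mathscr{P}^{1/3}}{\log^{4/3}\mathscr{P}} \lesssim Q^s_{\max}(\mathscr{P})
\]
is essentially the result of \cite{tobasco2017optimal, doering2019optimal}. I would reproduce it by plugging their self-similar 2D branching roll field into the variational principle (\ref{steady var prin}), taking $\xi$ to agree in sign with $u_z$ as in figure \ref{strategies I II a}, and then carrying out the optimization sketched in the excerpt over the boundary-layer thickness $\delta_{bl}$, the bulk aspect ratio $\ell_{bulk}$, and the branching profile $\ell(z)$. This is routine given the variational setup and yields the claimed lower bound.

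The genuinely new content of Conjecture \ref{strong conj} is the matching upper bound
\[
Q^s_{\max}(\mathscr{P}) \lesssim \frac{\mathscr{P}^{1/3}}{\log^{4/3}\mathscr{P}} \qquad \text{in two dimensions}.
\]
The bound (\ref{upper bound Qu}) is dimension-independent and cannot see the 2D topological obstruction described around figure \ref{conve rolls 2D steady branching b}, so a strictly two-dimensional argument is required. My plan is to encode the obstruction through the stream function. In 2D, $\bs{u}=\nabla^\perp\psi$ with $\psi\in H^1_0(\Omega^{2D})$, streamlines are level sets of $\psi$, and every connected component of $\{|\psi|\ge c\}$ is topologically a disk, tube, or annulus. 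Heat can be carried advectively between the two plates only along streamlines that connect them, and in 2D those streamlines cannot cross, which drastically limits how much cross-section is available at each altitude.

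Concretely I would attempt the following steps. First, define the \emph{transporting fraction} $\phi(z) := \mathrm{meas}\{x\in\mathbb{T}_{l_x} : \text{the streamline through }(x,z)\text{ meets both walls}\}$ and establish a pointwise bound $\phi(z)\le C\,\ell(z)$, where $\ell(z)$ is the characteristic horizontal scale of the streamline pattern at height $z$. Second, replace the standard one-scale background profile by a dyadic multi-scale profile $\tau(z)$ adapted to a hypothetical branching structure, with boundary-layer thickness $\delta_k=2^{-k}\delta_{bl}$ at dyadic level $k$, and derive a Hardy-type inequality bounding $\langle u_z T\rangle$ by a sum $\sum_k |\nabla\bs{u}|_k^2\,\ell_k^2$ where the $k$-th summand is controlled on the $k$-th dyadic annulus near the wall. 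Third, and most importantly, use the 2D stream-function structure to show that the nonlocal term $II$ in (\ref{steady var prin}) cannot simultaneously vanish at every level: because level sets of $\psi$ fold back, any $\xi$ that is constant along streamlines in the bulk and still correlates positively with $u_z$ at level $k$ must decorrelate at some other level, giving a strictly positive contribution to $II$ at each scale. Summing these unavoidable contributions across the $O(\log\mathscr{P})$ dyadic scales present in the branching cascade should produce the $\log^{4/3}\mathscr{P}$ defect.

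The main obstacle is the last step: turning the heuristic ``in 2D the streamlines must fold, so $\bs{u}\cdot\nabla\xi\not\equiv 0$ on a non-negligible bulk set'' into a quantitative lower bound on $II$ valid for \emph{every} admissible incompressible $\bs{u}$, not just branching trial fields. A promising route is to dualize (\ref{steady var prin}): for fixed $\bs{u}$, extremize in $\xi$ to obtain a generalized eigenvalue problem for the operator $\bs{u}\cdot\nabla(-\Delta)^{-1}\bs{u}\cdot\nabla + \mathscr{P}^{-1}\,\|\nabla\bs{u}\|_{L^2}^2(-\Delta)$, and then bound its principal eigenvalue from below using a 2D Cheeger- or isoperimetric-type inequality for level sets of $\psi$, combined with a layer-cake decomposition in $c=|\psi|$. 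This would convert the topological obstruction into the desired logarithmic loss. In the absence of such a sharp 2D spectral estimate the conjecture remains open, and the proposal above should be read as a roadmap locating the obstruction rather than a complete argument.
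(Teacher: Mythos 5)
This statement is a \emph{conjecture} in the paper, not a theorem: the author explicitly declines to prove it and in fact remarks ``We strongly believe that the weak conjecture is true but not so much that the strong conjecture is also true.'' There is therefore no proof in the paper to compare against, and your proposal should be read as an attempt at an open problem rather than a reproduction of an argument.

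That said, your outline is a reasonable roadmap and you are candid that it is not a proof. The lower bound direction is indeed the content of \cite{tobasco2017optimal,doering2019optimal}, obtained exactly as you describe by inserting 2D self-similar branching rolls into the variational principle~(\ref{steady var prin}), and you could cite it rather than rederive it. The upper bound is where the genuine gap lies, and you correctly locate it: one must show that \emph{every} admissible 2D flow incurs a logarithmic defect, not merely that branching trial fields do. Your idea of working through the stream function $\psi$ and using the topology of its level sets (folding, nesting of sublevel sets) is the natural framing of the obstruction, but the decisive step --- a quantitative, flow-independent lower bound on the nonlocal term $II$, or equivalently an eigenvalue estimate for $\bs{u}\cdot\nabla(-\Delta)^{-1}\bs{u}\cdot\nabla + \mathscr{P}^{-1}\|\nabla\bs{u}\|_{L^2}^2(-\Delta)$ valid for arbitrary incompressible $\bs{u}$ --- is not established, and no Cheeger-type estimate of the required form is currently known. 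Until that spectral estimate exists, even the \emph{weak} conjecture (Conjecture~\ref{weak conj}), which only asserts $Q^s_{\max}=o(\mathscr{P}^{1/3})$, remains open; proving the strong conjecture additionally requires identifying the exact power $\log^{4/3}\mathscr{P}$, which would demand matching the structure of the near-optimal 2D flows at all $O(\log\mathscr{P})$ dyadic scales. So your proposal identifies the right obstacle but does not overcome it, and neither does the paper claim to.
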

We strongly believe that the weak conjecture is true but not so much that the strong conjecture is also true.

 \begin{figure}
\centering
\begin{subfigure}{1.0\textwidth}
\centering
 \includegraphics[scale = 0.25]{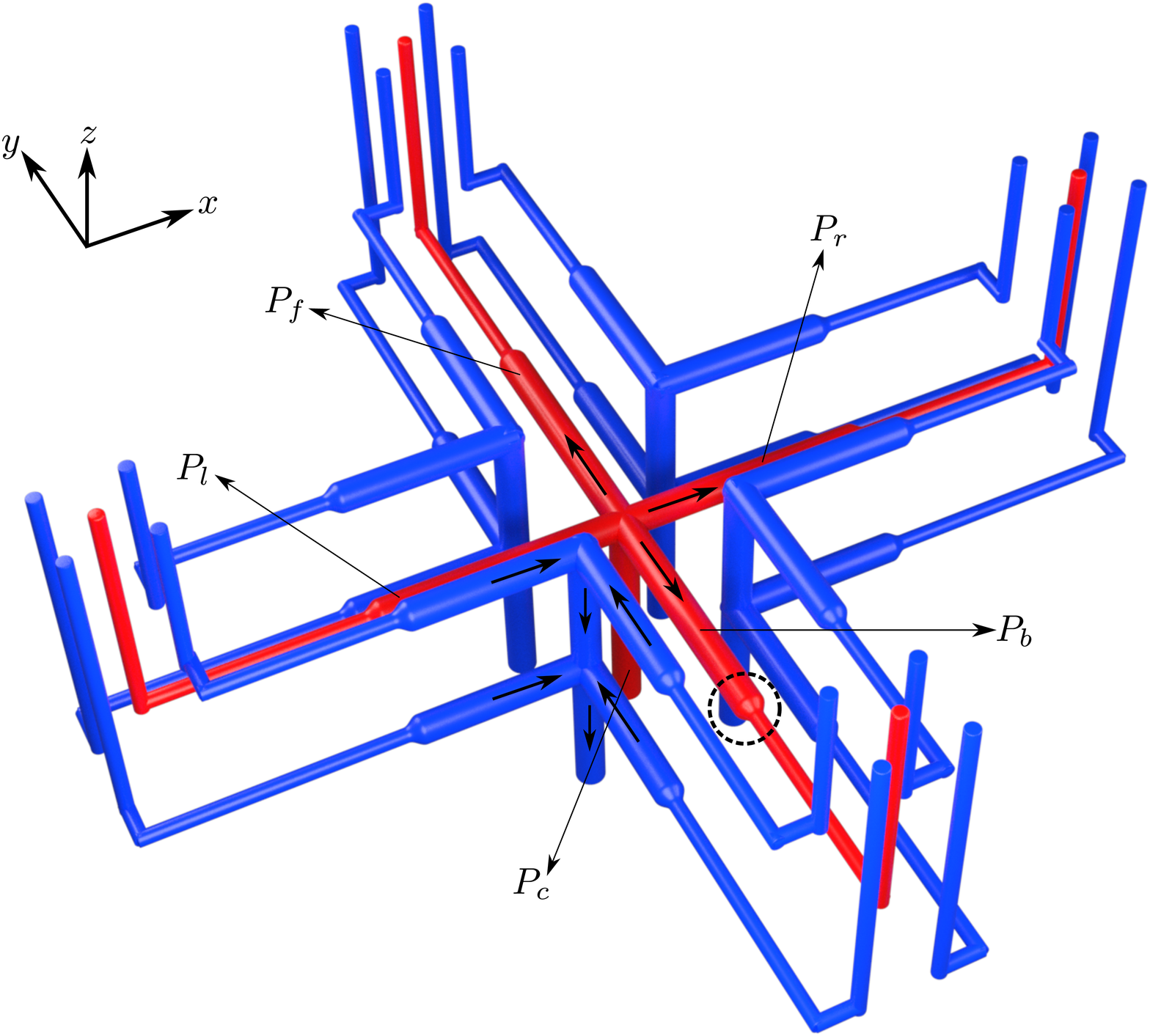}
 \caption{}
 \label{3D pipe branching a}
\end{subfigure}
%\begin{subfigure}{1.0\textwidth}
%\centering
% \includegraphics[scale = 0.1]{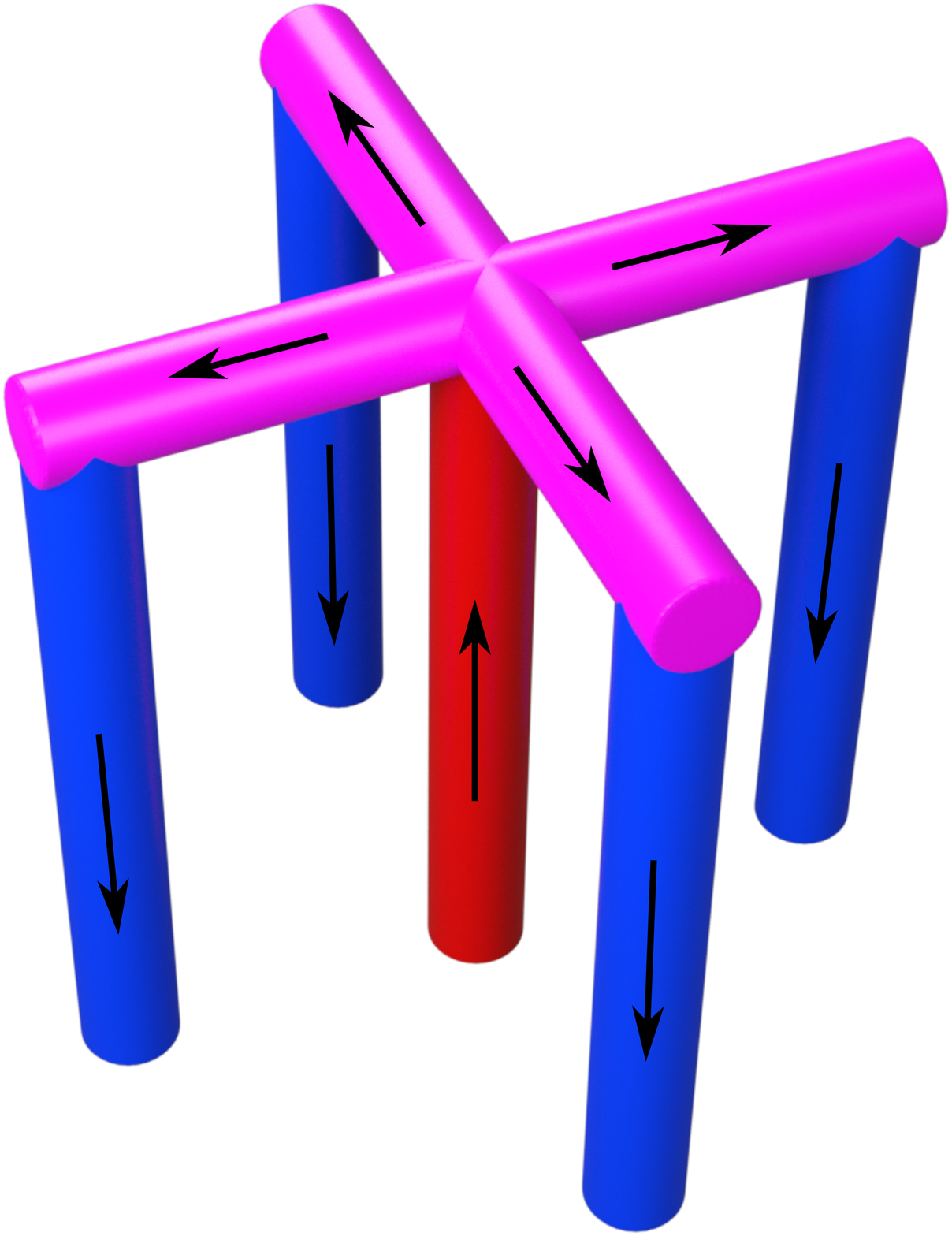}
% \caption{}
%\end{subfigure}
\begin{tabular}{lc}
\begin{subfigure}{0.5\textwidth}
\centering
 \includegraphics[scale = 0.1]{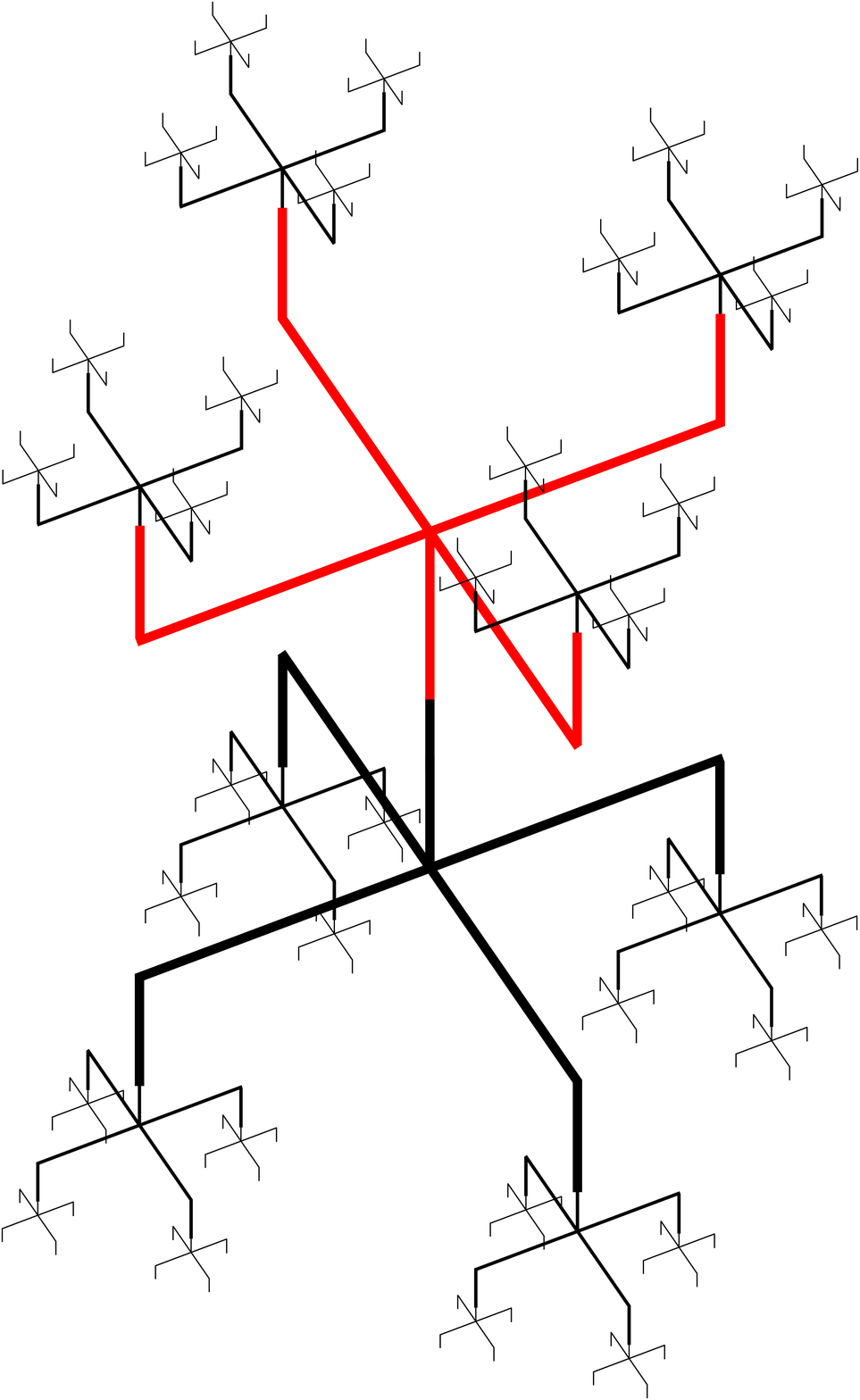}
\caption{}
\label{3D pipe branching b}
\end{subfigure} &
\begin{subfigure}{0.5\textwidth}
\centering
 \includegraphics[scale = 0.1]{Figures/Bl_processed.eps}
\caption{}
\label{3D pipe branching c}
\end{subfigure}
\end{tabular}
\caption{Illustration of the branching pipe flow. Panel (a): the parent construct $\overline{\bs{u}}$. It consists of red and blue pipes which are the part of pipelines $\bs{P}_{up}$ and $\bs{P}_{down}$, respectively. In panel (a), arrows are used in some pipes to show the direction of the flow. The reducer region of a pipe is also shown using a dashed circle. Panel (b): the branching skeleton. To build the main copy $\ol{\bs{u}}_N$ away from the boundary layer, we place the appropriately dilated version of the parent construct $\overline{\bs{u}}$ along the skeleton up to $N$ levels. Panel (c): the parent construct $\overline{\bs{u}}_b$, which we use in the boundary layer. In the construct $\overline{\bs{u}}_b$, the flow from red pipes turn back to blue pipes (shown in the pink color). A 2D cartoon of the resultant pipe flow is shown in figure \ref{2D cartoon}.}
 \label{3D pipe branching}
\end{figure}

%The pipeline structure of the parent copies $\overline{\bs{u}}$ (panel (a)) and  $\overline{\bs{u}}_b$ (panel (c)). We obtain the main copy $\ol{\bs{u}}_N$, by placing the appropriately dilated version of the parent copies along the ``skeleton'' shown in panel (b). In the $N$th level, we place $\overline{\bs{u}}_b$ to close the pipeline. A 2D cartoon of the resultant pipeline structure is shown in figure \ref{2D cartoon}. In panel (a), arrows are used in some pipes to show the direction of the flow. The reducer region of a pipe is also shown using a dashed circle. 

%Panel (b):   .. (you have to explain what the bold lines are )  . Panel (c) ... (you have to explain why the top is in pink ) 

\subsubsection{Three-dimensional steady branching pipe flows}
The novelty of this work comes from the realization that the topological obstruction discussed above, can be overcome by taking advantage of the third dimension. Indeed, in three dimensions, it is possible to construct flow channels  with a branching structure that continues all the way to the wall without the need for the flow to fold back as in the two-dimensional case. Therefore, in three dimensions, it is possible to construct a flow field $\bs{u}$ and a scalar field $\xi$ that have a branching structure while maintaining $\bs{u} \cdot \nabla \xi = 0$ everywhere except in the boundary layer (which overcomes the difficulty faced in two-dimensional steady branching flows). The construction in this paper is self-similar and the resultant flow field $\bs{u}$ looks like branching pipe flow. The parent construct used in the self-similar construction is shown in figure \ref{3D pipe branching a}. It consists of two different type of pipes, one in which the flow moves up (shown in red, we choose $\xi$ positive in this region) and one in which the flow moves down (shown in blue, we choose $\xi$ negative in this region). By placing appropriately scaled copies of this parent construct along the tree structure shown in figure \ref{3D pipe branching b}, we obtain the desired flow field. The self-similar construction does not continue forever but truncates after a finite number of levels depending on the value of $\mathscr{P}$. After a fixed number of levels, the flow finally folds back in the boundary layer, according to the construct shown in figure \ref{3D pipe branching c}.
This is the region where the hot and cold pipelines finally merge and $\bs{u} \cdot \nabla \xi$ is nonzero. A two-dimensional cartoon of this three-dimensional branching pipe structure is shown in figure \ref{2D cartoon}. This cartoon also emphasizes the topological obstruction in two dimensions which informally can be expressed as ``it is not possible to build two branching channels, one hot (in which the flow moves up) and other one cold (in which the flow moves down), in two-dimensions without having them intersect.''

Using this branching pipe flow a formal scaling analysis of the heat transfer yields
\begin{eqnarray}
Q^s_{\max}  \gtrsim \frac{1}{2^{-N} + \frac{4^N}{\mathscr{P}}}, \nonumber
\end{eqnarray}
where $N$ denotes the number of branching levels. After choosing $N = \lceil \log_2 \mathscr{P}^{1/3} \rceil$, we find that $Q^s_{\max} \gtrsim \mathscr{P}^{1/3}$. Theorem \ref{Main theorem steady case} is the rigorous result of this statement, which will be proved in the subsequent sections. The construction carried out in this paper can be summarized in three steps.

\begin{itemize}
\item \underline{Step I: Creating the parent constructs (the building blocks)} \\ The velocity fields from this step form the basis for the self-similar construction in the second step. In this step, we construct (i) $\ol{\bs{u}}$ (figure \ref{3D pipe branching a}), which is used to build branching flow away from the boundary layer. (ii) $\ol{\bs{u}}_b$ (figure \ref{3D pipe branching c}), which is used in the boundary layer to truncate the branching structure. 

\item \underline{Step II: Construction of the main copy (a single tree)} \\ In this step, we assemble the appropriately dilated copies of the parent constructs from the previous step to build the flow field $\ol{\bs{u}}_N$ (a 2D cartoon is shown in figure \ref{2D cartoon}). Here, $N$ denotes the number of branching levels which depends on $\mathscr{P}$. We also refer to this main copy as a single tree.

\item \underline{Step III: Construction of the final flow field (a forest)} \\ The flow field constructed in the last step is enough to capture the correct dependence of $Q^s_{\max}$ on $\mathscr{P}$. However, to capture the correct dependence on the domain aspect ratio $l_x$ and $l_y$, we build the final flow field $\bs{u}$ by placing several copies of the tree side-by-side to fill the whole domain, which then looks like a forest.
\end{itemize}

\begin{figure}
\centering
 \includegraphics[scale = 0.06]{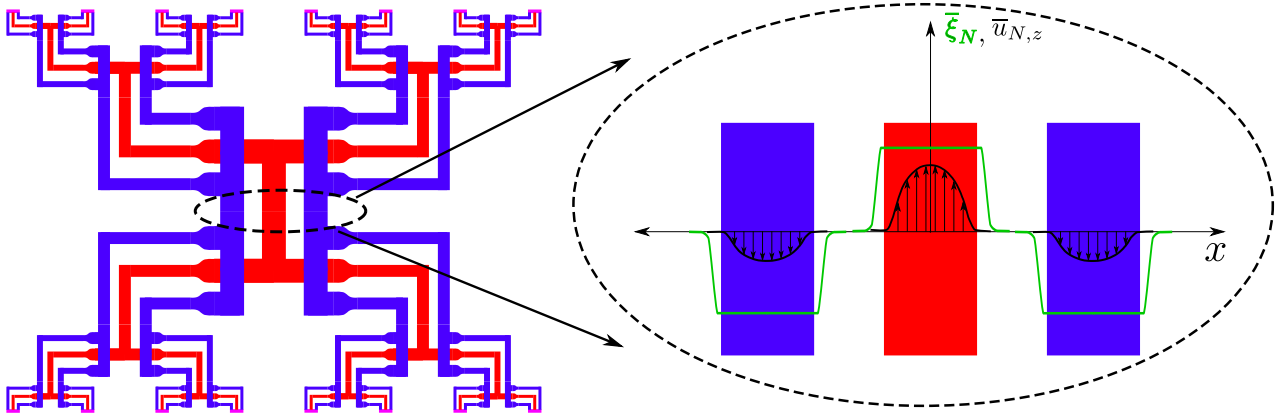}
\caption{shows a 2D cartoon of the main copy $\ol{\bs{u}}_N$. The pipeline $\bs{P}_{up}$ is shown in red color and the pipeline $\bs{P}_{down}$ is shown in blue color. In the blow-up figure of a section of the pipeline, the graph of $\ol{\xi}_N$ is also shown. Notice is that $\ol{\xi}_N$ is constant in the support of $\ol{\bs{u}}_N$.}
 \label{2D cartoon}
\end{figure}

\subsection{Organization of the paper}
The rest of paper is organized as follows. In section \ref{Notation}, we introduce a few notations and preliminaries that will be frequently used throughout the paper. In section \ref{Strategy and proof}, we perform Step III of the construction and prove the main theorem. We provide a detailed sketch of the parent constructs in section \ref{construction of parent copy}. We then carry out Step I and Step II. We provide a proof of Proposition \ref{Poisson's equation: inverse Laplace torus to D} (essential for the analysis of the nonlocal term defined in (\ref{terms var prin})) in section \ref{Section: An estimate solution of Poisson}. We close by discussing implications of our results in section \ref{Discussion}. A few of the more cumbersome but trivial calculations required to finish the proofs are carried out in appendices. 

\begin{center}
\subsection*{Acknowledgement}
\end{center}
A.K. thanks P. Garaud for a careful read of the paper and providing comments. A.K. also thanks I. Tobasco for providing comments, an invitation to visit University of Illinois Chicago and for several useful discussions.

\section{Notation and preliminaries} \label{Notation}
The three domains we will be frequently using in this paper are: $\mathbb{R}^3$, $\Omega$ and $D$, where
\addtocounter{equation}{1}
\begin{align}
\Omega \coloneqq \mathbb{T}_{l_x} \times \mathbb{T}_{l_y} \times (-1/2, 1/2), \qquad \qquad D \coloneqq \mathbb{R}^2 \times (-1/2, 1/2).
\tag{\theequation a-b}
\label{Omega D domain}
\end{align}
Here, for some $l > 0$, $\mathbb{T}_{l} \coloneqq (\mathbb{R}/l\mathbb{Z})$ and $\mathbb{T}_{l}$ is identified with $[-l/2, l/2)$ in the usual way. In the rest of this section, $V$ will denote either of these three domains: $\mathbb{R}^3$, $\Omega$ and $D$, whereas $\wt{V}$ will denote either $\mathbb{R}^3$ or $D$. Let $\bs{x}, \bs{x}^\prime \in \wt{V}$, for which we denote
\addtocounter{equation}{1}
\begin{align}
\bs{x}_{\parallel} \coloneqq (x, y, 0), \qquad \text{ and } \qquad |\bs{x} - \bs{x}^\prime|_{\parallel} \coloneqq  |\bs{x}_{\parallel} - \bs{x}^\prime_{\parallel}|,
\tag{\theequation a-b}
\label{parallel coordinate collapse notation}
\end{align}
where $|\, \cdot \,|$ denotes the Euclidean distance. Let $S \subseteq V$, we will use $\bs{1}_S$ to denote the indicator function corresponding to the set $S$.

We define the support of a scalar or a vector-valued function $f$ on $V$ as
\begin{eqnarray}
\supp f \coloneqq \ol{\{\bs{x} \in V \, | \, f(\bs{x}) \neq 0\}},
\end{eqnarray}
and the support only in the $z$ variable as
\begin{eqnarray}
\supp_z f \coloneqq \ol{\{z \in \mathbb{R} \, | \, (x, y, z) \in V, f(x, y, z) \neq 0\}}.
\end{eqnarray}

For a given $\bs{p} \in \mathbb{R}^3$, we define a translation map $T^{\bs{p}}: \mathbb{R}^3 \to \mathbb{R}^3$ as $T^{\bs{p}}(\bs{x}) = \bs{x} + \bs{p}$. The inverse map is therefore denoted as $T^{-\bs{p}}$.
Then, if $f$ is a scalar function or a vector-valued function on $\mathbb{R}^3$, we define the corresponding translated function $T^{\bs{p}} f$ as
\begin{eqnarray}
(T^{\bs{p}} f)(\bs{x}) = f(T^{-\bs{p}}(\bs{x})), \quad \text{where} \quad \bs{x} \in \mathbb{R}^3.
\end{eqnarray}

Similarly, for a given $\theta \in [0, 2 \pi]$, we define a rotation map $\rho_\theta : \wt{V} \to \wt{V}$, which performs a counterclockwise rotation in the $xy$-plane by an angle $\theta$. We denote the inverse map by $\rho_{-\theta}$. Then, if $\zeta$ is a scalar function on $\wt{V}$, we define the corresponding rotated scalar function $\rho_\theta \zeta$ on $\wt{V}$ as
\begin{eqnarray}
(\rho_\theta \zeta)(\bs{x}) = \zeta(\rho_{-\theta}(\bs{x})), \quad \text{where} \quad \bs{x} \in \wt{V}.
\end{eqnarray}
Furthermore, if $\bs{v}$ is a vector-valued function defined on $\wt{V}$, we define the corresponding rotated vector-valued function $\rho_\theta \bs{v}$ on $\wt{V}$ as
\begin{eqnarray}
(\rho_\theta \bs{v})(\bs{x}) = \rho_\theta\left(\bs{v}(\rho_{-\theta}(\bs{x}))\right), \quad \text{where} \quad \bs{x} \in \wt{V}.
\end{eqnarray}

Let's denote the $\sigma-$algebra of Borel sets by $\mathcal{B}(\mathbb{R}^3)$. Given a Radon measure $\mu: \mathcal{B}(\mathbb{R}^3) \to \mathbb{R}$ and a vector field $\bs{u} \in L^1_{loc}(\mathbb{R}^3; \mathbb{R}^3, \mu)$, the set function  $\nu: \mathcal{B}(\mathbb{R}^3) \to \mathbb{R}^3$
\begin{eqnarray}
\nu \coloneqq (\nu_x, \nu_y, \nu_z) \coloneqq (u_x \mu, u_y \mu, u_z \mu)
\label{Notation: vect-val-meas}
\end{eqnarray}
is called a vector-valued Radon measure. Alternate shorthand notation is $\nu = \bs{u} \mu$. The Riesz’s theorem ensures that the space of vector-valued Radon measure $\mathcal{M}$ is dual to the space of compactly supported continuous vector fields $C_c(\mathbb{R}^3; \mathbb{R}^3)$ \cite{MR1645086, Maggi12gmt}.

Now given a function $f \in C_c(\mathbb{R}^3)$ and $\nu \in \mathcal{M}$ as defined in (\ref{Notation: vect-val-meas}), the integration of $f$ with respect to the measure $\nu$ is a vector in $\mathbb{R}^3$ and is given by
\begin{eqnarray}
\int_{\mathbb{R}^3} f \, {\rm d} \nu = \left(\int_{\mathbb{R}^3} f u_x \, {\rm d} \mu, \int_{\mathbb{R}^3} f u_y \, {\rm d} \mu, \int_{\mathbb{R}^3} f u_z \, {\rm d} \mu\right),
\end{eqnarray}
and the convolution is given by
\begin{eqnarray}
(f * \nu)(\bs{x}) = \int_{\mathbb{R}^3} f(\bs{x} - \bs{x}^\prime) \, {\rm d} \nu(\bs{x}^\prime).
\end{eqnarray}

%Let $\bs{e} \in \mathbb{R}^3$ be a vector and $\mathcal{H}^1$ be the Hausdorff measure of dimension one. Let $\ell$ be a line segment or ray, as defined in (\ref{Notation: line seg}-\ref{Notation: ray}), the vector-valued measures of particular interest to this paper are of the following type  
%\begin{eqnarray}
%\nu = \bs{e} \mathcal{H}^1 \llcorner \ell.
%\end{eqnarray}

\section{Step III of the construction: Proof of Theorem \ref{Main theorem steady case}} \label{Strategy and proof}

In this section we begin by performing Step III of the construction. We assume the existence of main copies $\ol{\bs{u}}_N$ and $\ol{\xi}_N$ with properties stated in the proposition below. Then we place several of these copies together in $\Omega$, to build the flow field $\bs{u}$ and scalar field $\xi$, which we then use in the variational principle (\ref{steady var prin}) to prove Theorem \ref{Main theorem steady case}.
\begin{proposition}
\label{Structure of the near optimal flow: flow fields between stripes}
For every positive integer $N$, there exist $\overline{\bs{u}}_N \in C^\infty_{c}(D; \mathbb{R}^3)$ and $\overline{\xi}_{N} \in C^\infty_c(D)$ such that 
\\
(i) $\nabla \cdot \overline{\bs{u}}_N \equiv 0$, \\
(ii) $\supp \overline{\bs{u}}_N \cup \supp \overline{\xi}_{N} \Subset (-1/2, 1/2) \times (-1/2, 1/2) \times (-1/2, 1/2)$, \\
(iii) $\supp_z (\overline{\bs{u}}_N \cdot \nabla \overline{\xi}_{N}) \Subset (1/2 - c_1 2^{-N}, 1/2 - c_2 2^{-N}) \cup (-1/2 + c_2 2^{-N}, -1/2 + c_1 2^{-N})$, \\
(iv) $\norm{\overline{\bs{u}}_N \cdot \nabla \overline{\xi}_{N}}_{L^\infty(D)} \lesssim 2^N$, \\
(v) $\int_{D} |\nabla \overline{\bs{u}}_N|^2 \, {\rm d} \bs{x} + \int_{D} |\nabla \overline{\xi}_{N}|^2 \, {\rm d} \bs{x} \lesssim 2^N$, \\
(vi) $\int_{D} \overline{u}_{N, z} \, \overline{\xi}_{N} \, {\rm d} \bs{x} \geq c_3 > 0$, \\
Here, $0 < c_2 < c_1 < 1$ and $c_3$ are constants independent of $N$ and $\overline{u}_{N, z}$ is the $z$-component of $\overline{\bs{u}}_N$.
\end{proposition}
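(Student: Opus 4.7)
The plan is to execute Steps~I and~II of the construction philosophy outlined in the introduction (Step~III having already been done in Section~\ref{Strategy and proof}), producing the self-similar branching pipe network depicted in figure~\ref{3D pipe branching}. At level $k$ of the tree I aim to have $\mathcal{O}(4^k)$ pipes of characteristic size $2^{-k}$ in all three directions, each carrying a smooth divergence-free plug flow of unit magnitude. The field $\overline{\bs{u}}_N$ is then the union of these pipe flows together with two boundary-layer slabs of thickness $\sim 2^{-N}$ at $z = \pm 1/2$ in which up-pipes loop back into down-pipes, and $\overline{\xi}_N$ takes the value $+1$ on every up-pipe, $-1$ on every down-pipe, mollified in a collar of each pipe wall so that $\overline{\xi}_N \in C_c^\infty(D)$.

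Step~I produces the two parent pieces. The bulk parent $\overline{\bs{u}}$ of figure~\ref{3D pipe branching a} lives in a reference cube and contains two disjoint pipes $\bs{P}_{up}$ and $\bs{P}_{down}$, each featuring a reducer region that fans the incoming flow out into four disjoint daughter sub-pipes of half the horizontal scale. Divergence-freeness inside the reducer is secured by prescribing the flux profiles at the single entrance and the four exits, checking that the net fluxes balance, and realizing $\overline{\bs{u}}$ as the curl $\nabla \times \bs{A}$ of a smooth compactly supported vector potential that interpolates the prescribed boundary data. The boundary parent $\overline{\bs{u}}_b$ of figure~\ref{3D pipe branching c} is built in the same way but executes a U-turn smoothly connecting an up-pipe to an adjacent down-pipe inside a slab of height $\sim c_1$; this is the only place where $\overline{\bs{u}}_N$ crosses a jump of $\overline{\xi}_N$. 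Step~II then assembles $\overline{\bs{u}}_N$ as a sum of translated, rotated, and isotropically dilated copies of $\overline{\bs{u}}$ along the tree skeleton of figure~\ref{3D pipe branching b} with scale factor $2^{-k}$ at level $k$, capping the innermost level with rescaled copies of $\overline{\bs{u}}_b$. Offsets and orientations are chosen so that the four exit faces of each level-$k$ pipe coincide with the entrance faces of its level-$(k{+}1)$ children, which together with matching of the boundary data ensures that the assembled vector field is $C^\infty$, divergence-free, and supported inside $(-1/2,1/2)^3$. The same placement rule defines $\overline{\xi}_N$.

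Verification of (i)--(vi) is then bookkeeping against the scaling above. Items (i) and (ii) are immediate. For (iii), inside every bulk pipe $\nabla \overline{\xi}_N$ is normal to the wall while $\overline{\bs{u}}_N$ is tangential, so $\overline{\bs{u}}_N \cdot \nabla \overline{\xi}_N \equiv 0$ outside the two boundary slabs. For (iv), in the boundary layer $|\overline{\bs{u}}_N| \sim 1$ and $|\nabla \overline{\xi}_N| \sim 2^N$. For (v), a single level-$k$ pipe has volume $\sim 2^{-3k}$ and supports gradients of size $\sim 2^k$, contributing $\sim 2^{-k}$ to the enstrophy; summing over the $4^k$ pipes at level $k$ yields $2^k$, and the geometric series $\sum_{k \leq N} 2^k \lesssim 2^N$. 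Item~(vi) follows because $\overline{u}_{N,z}\,\overline{\xi}_N$ has constant sign on every pipe and the total pipe volume is bounded below by a constant independent of $N$.

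The main obstacle, in my view, is the rigorous construction of the reducer: a four-way divergence-free splitter that is smooth, compactly supported, and matches prescribed plug-flow boundary data on one entrance and four exit faces, in a way that glues continuously to the reducer of its children at the next scale. Once the gluing is arranged so as to preserve the scaling $|\overline{\bs{u}}| \sim 1$, $|\nabla \overline{\bs{u}}| \sim 2^k$ uniformly across levels, everything else in the proposition---support properties, boundary-layer cap, and the geometric-series bookkeeping---becomes a routine consequence of self-similarity.
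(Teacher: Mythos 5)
Your high-level strategy, scaling estimates, and verification of items (i)--(vi) line up with what the paper does, and the geometric picture you describe (a tree of $4^k$ pipes at scale $2^{-k}$, up-pipes carrying $\xi = +1$ and down-pipes $\xi = -1$, a boundary slab of thickness $\sim 2^{-N}$ where the U-turn lives) is the paper's picture. But the technical engine you propose for producing a divergence-free, smooth, self-similar field is genuinely different from the paper's, and it is in exactly that engine that you defer the hard part. You plan to realize the parent flow as $\nabla \times \bs{A}$ for a compactly supported vector potential $\bs{A}$ that interpolates plug-flow data on one inlet face and four outlet faces of a ``splitter.'' The paper instead avoids ever solving for such a splitter: it defines the skeleton as a graph of line segments, attaches a vector-valued Radon measure $\bs{e}\,\mathcal{H}^1\llcorner\ell$ to each edge, and mollifies. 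Divergence-freeness then reduces to a Kirchhoff flux-balance condition at each vertex (Lemma~\ref{Construction of the flow: junction lemma}), which is checked by a one-line telescoping computation. The only region requiring a bespoke construction is the single-pipe \emph{reducer} --- a radius change along an axisymmetric pipe --- which is handled by an explicit axisymmetric streamfunction (Lemma~\ref{Construction of the flow: reducer lemma}). In other words, your ``four-way divergence-free splitter'' conflates two pieces the paper deliberately separates (the 5-way Kirchhoff junction and the single-pipe constriction), and that separation is precisely what makes the paper's construction routine rather than delicate. Your route is not wrong in principle (a Bogovskii-type or vector-potential interpolation could certainly be carried out), but it faces a harder geometric subproblem that the paper's decomposition dissolves.

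Two further gaps worth flagging. First, your gluing argument --- ``matching of the boundary data ensures that the assembled vector field is $C^\infty$'' --- is not sufficient: continuity of the values across an interface does not give smoothness unless all normal derivatives match too. The paper resolves this by arranging the parent and its rescaled children to \emph{coincide identically on an open overlap region} (e.g.\ $\overline{\bs{u}}$ and $\sum_{\theta}T^{\bs{\tau}_\theta}\overline{\bs{u}}(2\bs{x})$ agree for $7/32<z<9/32$), which makes $C^\infty$ across the interface automatic; you would need to build the same overlap into your construction. Second, your justification of (iii) (``$\nabla\overline{\xi}_N$ normal to the wall while $\overline{\bs{u}}_N$ tangential'') works in a straight section of pipe but is not obviously true at junctions or in the reducer, where the flow direction changes. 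The cleaner mechanism, and the one the paper uses, is to make $\overline{\xi}_N$ \emph{constant} on a tube of radius $\lambda$ strictly containing the support (radius $\gamma<\lambda$) of $\overline{\bs{u}}_N$, so that $\nabla\overline{\xi}_N\equiv 0$ on $\supp\overline{\bs{u}}_N$ outright except in the boundary layer; your ``constant on each pipe, mollified in a collar'' description is compatible with this but your verification should rely on the constancy, not on orthogonality.
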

\begin{proof}[Proof of Theorem \ref{Main theorem steady case}]
We construct $\bs{u}$ (and $\xi$) by appropriately placing the several horizontally scaled copies of $\overline{\bs{u}}_N$ (and $\overline{\xi}_{N}$) from Proposition \ref{Structure of the near optimal flow: flow fields between stripes} side-by-side (see below for details). Specifically, let $n_x$ and $n_y$ be two positive integers, then we place $n_x n_y$ copies of $\overline{\bs{u}}_N$ (and $\overline{\xi}_{N}$) in a  two-dimensional rectangular horizontal array. Then from the conditions on $\overline{\bs{u}}_N$ and $\overline{\xi}_{N}$ given in Proposition \ref{Structure of the near optimal flow: flow fields between stripes}, we obtain estimates on various terms in the expression (\ref{steady var prin}) and show that the desired lower bound on $Q^s_{\max}$, stated in Theorem \ref{Main theorem steady case}, can be obtained. 

More specifically, given $n_x, n_y \in \mathbb{N}$, we define two lengths $d_x$ and $d_y$ as follows:
$$d_x = \frac{l_x}{n_x} \quad \text{and} \quad d_y = \frac{l_y}{n_y}.$$
Next, we define $\bs{u} : D \to \mathbb{R}^3$ and $\xi : D \to \mathbb{R}$ as 
\begin{subequations}
\begin{eqnarray}
\bs{u}\left(x d_x - \frac{l_x}{2} + \frac{2 i - 1}{2} d_x, y d_y - \frac{l_y}{2} + \frac{2 j - 1}{2} d_y, z\right) \coloneqq \overline{\bs{u}}_N(x, y, z) \nonumber \\
\xi\left(x d_x - \frac{l_x}{2} + \frac{2 i - 1}{2} d_x, y d_y - \frac{l_y}{2} + \frac{2 j - 1}{2} d_y, z\right) \coloneqq \overline{\xi}_N(x, y, z) \nonumber
\end{eqnarray}
\end{subequations}
for all $i, j \in \mathbb{Z}$ and $(x, y, z) \in (-1/2, 1/2) \times (-1/2, 1/2) \times (-1/2, 1/2)$, otherwise, $\bs{u} \coloneqq \bs{0}$ and $\xi \coloneqq 0$. It is clear that $\bs{u}$ and $\xi$ are $l_x-l_y-$periodic functions. It is the identification of these $l_x-l_y-$periodic functions with functions on $\Omega = \mathbb{T}_{l_x} \times \mathbb{T}_{l_y} \times (-1/2, 1/2)$, which we continue to denote as $\bs{u}$ and $\xi$, that we use throughout.

By construction, $\bs{u} \in C_c^\infty(\Omega; \mathbb{R}^3)$ and $\xi \in C^\infty_c(\Omega)$ and therefore belongs to the admissible sets $\mathcal{A}^s$ and $\mathcal{X}^s$ as defined in (\ref{admissible vel steady}) and (\ref{admissible xi steady}), respectively. Now one can estimates important terms in the variational formula (\ref{steady var prin}). Let's start with the following:
\begin{eqnarray}
\dashint_{\Omega} |\nabla \bs{u}|^2 \, {\rm d} \bs{x} = \frac{1}{l_x l_y} \int_{\Omega} |\nabla \bs{u}|^2 \, {\rm d} \bs{x} = \frac{n_x n_y}{l_x l_y} \int_{-\frac{l_x}{2}}^{-\frac{l_x}{2} + d_x} \int_{-\frac{l_y}{2}}^{-\frac{l_y}{2} + d_y} \int_{-\frac{1}{2}}^{\frac{1}{2}} |\nabla \bs{u}|^2 \, {\rm d} \bs{x} \nonumber \\
\lesssim \frac{n_x n_y d_x d_y}{l_x l_y \min\{d_x^2, d_y^2, 1\}} \int_{D} |\nabla \overline{\bs{u}}_N|^2 \, {\rm d} \bs{x} \lesssim \frac{n_x n_y d_x d_y}{l_x l_y \min\{d_x^2, d_y^2, 1\}} 2^N.
\label{Strategy and proof of the main theorem: estimate L 2 u}
\end{eqnarray}
Similarly, we have
\begin{eqnarray}
\dashint_{\Omega} |\nabla \xi|^2 \, {\rm d} \bs{x} 
\lesssim \frac{n_x n_y d_x d_y}{l_x l_y \min\{d_x^2, d_y^2, 1\}} \int_{D} |\nabla \overline{\xi}_N|^2 \, {\rm d} \bs{x} \lesssim \frac{n_x n_y d_x d_y}{l_x l_y \min\{d_x^2, d_y^2, 1\}} 2^N.
\label{Strategy and proof of the main theorem: estimate L 2 xi}
\end{eqnarray}
In a same way, one can also show
\begin{eqnarray}
\dashint_{\Omega} u_z \xi \, {\rm d} \bs{x} = \frac{n_x n_y d_x d_y}{l_x l_y} \int_{D} \overline{u}_{N, z} \, \overline{\xi}_{N} \, {\rm d} \bs{x}   \geq \frac{n_x n_y d_x d_y c_3}{l_x l_y},
\label{Strategy and proof of the main theorem: estimate w xi}
\end{eqnarray}
Finally, we have
\begin{eqnarray}
\norm{\bs{u} \cdot \nabla \xi}_{L^\infty(\Omega)} \lesssim \frac{2^N}{\min\{d_x, d_y, 1\}},
\label{Strategy and proof of the main theorem: estimate nonlocal}
\end{eqnarray}
with
\begin{eqnarray}
\supp_z (\bs{u} \cdot \nabla \xi) \Subset (1/2 - c_1 2^{-N}, 1/2 - c_2 2^{-N}) \cup (-1/2 + c_2 2^{-N}, -1/2 + c_1 2^{-N}).
\end{eqnarray}
Provided $N \geq 3$, we obtain
\begin{eqnarray}
\dashint_{\Omega} |\nabla \Delta^{-1} \diverge (\bs{u} \xi)|^2 \, {\rm d} \bs{x} \lesssim \frac{1}{2^N \min\{d_x^2, d_y^2, 1\}},
\label{Strategy and proof of the main theorem: estimate nonlocal req. poisson}
\end{eqnarray}
using the following proposition.
\begin{proposition}
\label{Poisson's equation: inverse Laplace torus to D}
Let $f \in L^\infty(\Omega)$ such that $\supp_z f \subseteq (1/2 - c_1 \varepsilon, 1/2 - c_2 \varepsilon) \cup (-1/2 + c_2 \varepsilon, -1/2 + c_1 \varepsilon)$, where $0 < c_2 < c_1 < 1$ and $\varepsilon < 1/4$ are three constants, then we have
\begin{eqnarray}
\dashint_{\Omega} |\nabla \Delta^{-1} f|^2 \, {\rm d} \bs{x}  \lesssim \varepsilon^3 \norm{f}_{L^\infty(\Omega)}^2.
\label{The Poisson's equation: f supp inverse Laplace torus to D}
\end{eqnarray}
\end{proposition}
Proof of the Proposition \ref{Poisson's equation: inverse Laplace torus to D} is provided in section \ref{Section: An estimate solution of Poisson}.

At this point, we prescribe $n_x$ and $n_y$. As stated in the introduction, we have chosen $l_x \leq l_y$ without the loss of generality. We divide the proof of the theorem into two parts: (i) when $l_x \geq 1$, (ii) when $l_x < 1$.

In the first case ($l_x \geq 1$), we choose
\begin{eqnarray}
n_x = \left\lceil l_x \right\rceil \quad \text{and} \quad n_y = \left\lceil l_y \right\rceil,
\end{eqnarray} 
where $\left\lceil \; \cdot \; \right\rceil$ is the ceiling function. Then from the definitions of $d_x$ and $d_y$, we have
\begin{eqnarray}
\frac{1}{2} \leq d_x, d_y \leq 1.
\end{eqnarray}
Noting this and using the estimates (\ref{Strategy and proof of the main theorem: estimate L 2 u}), (\ref{Strategy and proof of the main theorem: estimate L 2 xi}), (\ref{Strategy and proof of the main theorem: estimate w xi}) and (\ref{Strategy and proof of the main theorem: estimate nonlocal}) in (\ref{steady var prin}) gives
\begin{eqnarray}
\frac{1}{c_5 \frac{1}{2^N} + c_6 \frac{4^N}{\mathscr{P}}} \lesssim Q_{\max}(\mathscr{P}),
\end{eqnarray}
where $c_5$ and $c_6$ are two constants independent of any parameter. Choosing the value of $N$ as
\begin{eqnarray}
N = \left\lceil \frac{1}{3} \log_2 \frac{c_5 \mathscr{P}}{2 c_6} \right\rceil,
\end{eqnarray}
we can show
\begin{eqnarray}
\mathscr{P}^{1/3} \lesssim Q_{\max}(\mathscr{P})
\end{eqnarray}
provided
\begin{eqnarray}
\mathscr{P} \gtrsim \frac{2 c_6}{c_5}.
\end{eqnarray}

In the second case, when $l_x < 1$, we choose
\begin{eqnarray}
n_x =  1  \quad \text{and} \quad n_y = \left\lceil \frac{l_y}{l_x} \right\rceil,
\end{eqnarray}
then we have
\begin{eqnarray}
d_x = l_x \quad \text{and} \quad   \frac{l_x}{2} \leq d_y \leq l_x.
\end{eqnarray}
The estimates (\ref{Strategy and proof of the main theorem: estimate L 2 u}), (\ref{Strategy and proof of the main theorem: estimate L 2 xi}), (\ref{Strategy and proof of the main theorem: estimate w xi}) and (\ref{Strategy and proof of the main theorem: estimate nonlocal}) then imply
\begin{eqnarray}
\frac{1}{c_7 \frac{1}{2^N l_x^2} + c_8 \frac{4^N}{\mathscr{P} l_x^4}} \lesssim  Q_{\max}(\mathscr{P}),
\end{eqnarray}
for some positive constants $c_7$ and $c_8$ independent of any parameter. Now choosing the following value of $N$
\begin{eqnarray}
N = \left\lceil \frac{1}{3} \log_2 \frac{c_7 \mathscr{P} l_x^2}{2 c_8} \right\rceil,
\end{eqnarray}
we obtain
\begin{eqnarray}
\mathscr{P}^{1/3} l_x^{8/3} \lesssim Q_{\max}(\mathscr{P}),
\end{eqnarray}
provided
\begin{eqnarray}
\mathscr{P} \gtrsim \frac{2 c_8}{c_7 l_x^2},
\end{eqnarray}
which then completes the proof of the Theorem \ref{Main theorem steady case}.
\end{proof}

\section{Construction of three-dimensional branching pipe flow: Step I and Step II}
\label{construction of parent copy}
The goal of this section is to perform Step I, which is to build the parent constructs $\overline{\bs{u}}$, $\overline{\bs{u}}_b$, $\overline{\xi}$ and $\overline{\xi}_b$, followed by Step II, which is to create the main copies $\overline{\bs{u}}_N$ and $\overline{\xi}_N$. We start by giving a sketch of the parent copies and how to assemble their dilated versions to create the main copies, which is then followed by the actual construction in Step I and Step II.

\subsection{A detailed sketch of the construction}
\label{a detailed sketch}
As the support of the velocity field $\ol{\bs{u}}_N$ looks like a pipe network (see figure \ref{2D cartoon}) and the flow field itself is similar to flow in pipes, we use words such as pipe, pipe network or pipeline for ease of exposition below.

The main copy $\ol{\bs{u}}_N$ consists of two ``pipelines'': one in which the flow goes upward (the positive $z$-direction)  in a branching fashion, $\bs{P}_{up}$ (shown in red in figure \ref{2D cartoon}) and one in which the flow goes downward (the negative $z$-direction), again in a branching fashion, $\bs{P}_{down}$ (shown in blue). The part of the pipelines $\bs{P}_{up}$ and $\bs{P}_{down}$ that resides in the parent construct $\ol{\bs{u}}$ is also shown in red and blue, in figure \ref{3D pipe branching}.

The volume flow rate through both of these pipelines is the same. In what follows, we describe the pipeline design only for $z \geq 0$ and simply use mirror symmetry to construct the pipeline for $z \leq 0$. In the parent construct,  $\bs{P}_{up}$ starts from a center pipe, denoted by $P_c$ in figure \ref{3D pipe branching a}. The center pipe goes up vertically, until a first junction at $z = 1/ 8$, where it splits
 into four pipes going right (positive $x$-direction) $P_r$, left (negative $x$-direction) $P_l$, front (positive $y$-direction) $P_f$ and back (negative $y$-direction) $P_b$. In plumbing terms, the junction of these pipes would be known as a 5-way cross. The horizontal extent of these pipes is $1/4$. Near the junction, each of the four horizontal pipes has a radius equal to that of $P_c$. Therefore, because of incompressibility condition, the speed of the flow goes down by a factor of four as the flow enters from $P_c$ to $P_r$, $P_l$, $P_f$ and $P_b$. However, away from the junction (midway), a constriction is added to reduce the radii of these four pipes by a factor of half after which the speed of the flow regains its original value (again because of incompressibility). In plumbing terms, the region where the radius of the pipe decreases is known as a reducer. Finally, these horizontal pipes bend upward up to a level $z = 1/4$. With this construction, the pipeline $\bs{P}_{up}$ near $z = 1/4$ consists of four pipes whose radius is half that  of the pipe $P_c$ near $z = 0$ but all of them with same magnitude of velocity. We can then continue the pipeline from $z = 1/4$ to $z = 1/4 + 1/8$ by adding four half-sized copies of the original one.  In a similar way, the pipeline can be further continued up to any number of levels $N$.

The pipeline in which the flow goes down consists of four pipes surrounding $P_c$, each with radii equal to that of $P_c$. The speed of the flow in one of these four pipes is $1/4$  the speed of the flow in $P_c$, ensuring that the total volume flow going upward and downward are the same. The  flow in these four pipes come from the horizontally placed pipes that are similarly surrounding the pipes $P_r$, $P_l$, $P_f$ and $P_b$ as shown in figure \ref{3D pipe branching a}. The radii of these horizontal pipes, similar to the case of the previous pipeline, changes by a factor of two to ensure that the flow velocity in the vertical pipes that they connect remains the same. Finally, before bending in the upward direction, the horizontal pipes, in this pipeline, close their distance to the horizontal pipes from pipeline $\bs{P}_{up}$ to make sure that we can glue a self-similar parent copy of half-the-size to continue the pipeline.

The self-similar continuation of both pipelines truncates after a fixed number of levels $N$ (depending on $\mathscr{P}$). In the last level (closest to the wall), the two pipelines merge, i.e., the flow from the pipeline $\bs{P}_{up}$ goes to the pipeline $\bs{P}_{down}$. This done by gluing an appropriately scaled parent construct $\overline{\bs{u}}_b$ as shown in figure \ref{3D pipe branching c}.

Once we have the main copy $\ol{\bs{u}}_N$ ready, we can select $\ol{\xi}_N$. We choose $\ol{\xi}_N$ (everywhere except in the boundary layers where the pipelines truncate) to be such that its value is a positive constant $\xi_0$ in the region where the pipeline $\bs{P}_{up}$ lies and is $-\xi_0$ in the region where the pipeline $\bs{P}_{down}$ lies and decays to zero rapidly away from these pipelines (see figure \ref{2D cartoon}). There are two advantages with this choice:
\begin{itemize}
\item[(i)] The quantity $\ol{\bs{u}}_N \cdot \nabla \ol{\xi}_N$ is identically zero except in the last level of construction where the branching structure truncates. Therefore, it is possible to restrict the support of $\ol{\bs{u}}_N \cdot \nabla \ol{\xi}_N$ to a thin horizontal layer close to the wall, which helps in obtaining a good estimate on the nonlocal term in (\ref{terms var prin}).
\item[(ii)] The transport term simplifies as follows. 
\begin{eqnarray}
\int \ol{u}_{N, z} \; \ol{\xi}_N \, {\rm d}{\bs{x}} \approx \xi_0 \int_{\bs{P}_{up}} \ol{u}_{N, z} \, {\rm d}\bs{x} - \xi_0 \int_{\bs{P}_{down}} \ol{u}_{N, z} \, {\rm d}\bs{x} \approx 2 \xi_0 V_0, \nonumber
\end{eqnarray}
where $V_0 > 0$ is the total flow (constant volume flux through any horizontal section) going upward in pipeline $\bs{P}_{up}$ or downward in pipeline $\bs{P}_{down}$. There will be minor corrections in the region where the pipelines truncate, which is why we use the approximate symbol.
\end{itemize}

In summary, we built two pipelines with a self-similar ``tree-like'' branching structure. The first one, $\bs{P}_{up}$, is ``hot'' (as $\ol{\xi}_N$ is positive in that region) in which the flow goes up and the second one, $\bs{P}_{down}$, is ``cold'' (as $\ol{\xi}_N$ is negative) and surrounds (without touching) the hot pipeline $\bs{P}_{up}$. This type of ``disentanglement'' of the hot pipeline from the cold one is possible in three dimensions but not in two dimensions and is the main reason behind the proof of Theorem \ref{Main theorem steady case}.

\subsection{Step I: Construction of the parent copies}
The purpose of this subsection is to build the parent constructs: $\ol{\bs{u}}$, $\ol{\bs{u}}_b$ and the trial $\xi$-field: $\ol{\xi}$, $\ol{\xi}_b$. Let us define a few parameters that will be frequently used in this section:
\begin{eqnarray}
\gamma = \frac{1}{500}, \qquad \lambda = \frac{1}{100}, \qquad \delta = \frac{1}{20}.
\label{Construction: choices that we made in our lives}
\end{eqnarray}
These parameters can roughly be understood as follows. The parameter $\gamma$ can be thought of as the radius of pipes in which the flow field is supported, whereas $\lambda$ is the radius of pipes in which the $\xi-$field is supported and  $\delta$ denotes the distance between pipeline $\bs{P}_{up}$ and $\bs{P}_{down}$ in the parent copy $\overline{\bs{u}}$.

\subsubsection{The flow field $\ol{\bs{u}}$ and $\ol{\bs{u}}_b$} \label{The parent flow field}
To construct the flow field, the basic idea is to define an appropriate vector-valued Radon measure supported on a set. This set is a collection of line segments and rays, which, in a loose sense, form the skeleton of the pipelines whose sketch is described in subsection \ref{a detailed sketch}. Most of the desired flow field will then be created by regularizing the Radon measure using a convolution with a mollifier, except in the reducer region of the pipelines. The flow field in the reducer region will be designed separately with the help of an axisymmetric streamfuction. 

We start by defining a few important points in $\mathbb{R}^3$, which will be helpful in creating the ``skeleton'' of the pipelines. We define
\begin{eqnarray}
\bs{p}_1 \coloneqq (0, 0, 0), \quad \bs{p}_2 \coloneqq (0, 0, 1/8), \quad \bs{p}_3 \coloneqq (1/4, 0, 1/8), \quad \bs{p}_4 \coloneqq (1/4, 0, 1/4), \nonumber
\end{eqnarray}
and
\begin{eqnarray}
\bs{q}^{i, j}_1 \coloneqq \left(\delta, \, j\delta, \, 0\right), \quad \bs{q}^{i, j}_2 \coloneqq \left(\delta, \, j\delta, \, 1/8-i\delta\right),  \quad \bs{q}^{i, j}_3 \coloneqq \left(\frac{1}{4} + \frac{i\delta}{2}, \, j\delta, \, 1/8-i\delta\right), \nonumber \\  \quad \bs{q}^{i, j}_4 \coloneqq \left(\frac{1}{4} + \frac{i\delta}{2}, \, \frac{j\delta}{2}, \, 1/8-i\delta\right), \quad \bs{q}^{i, j}_5 \coloneqq  \left(\frac{1}{4} + \frac{i\delta}{2}, \, \frac{j\delta}{2}, \, \frac{1}{4}\right), \nonumber
\end{eqnarray}
where $ i, j \in \mathbb{Z}.$
Next, we define a family of points, obtained by horizontal rotation of the points defined above. Let $\theta \in [0, 2\pi]$, we define
\begin{eqnarray}
\bs{p}_{k, \theta} \coloneqq \rho_{\theta}(\bs{p}_k) \qquad \text{for} \quad k \in \{1, 2, 3, 4\}; \qquad \bs{q}^{i, j}_{k, \theta} \coloneqq \rho_{\theta}(\bs{q}^{i, j}_k) \qquad \text{for} \quad k \in \{1, 2, 3, 4, 5\}.
\label{Construction: vip points}
\end{eqnarray}
We recall that the transformation $\rho_\theta$, defined in section \ref{Notation}, is a counterclockwise horizontal rotation by an angle $\theta$.

We define two sets:
\begin{eqnarray}
J \coloneqq \{-1, 1\} \qquad \text{and} \qquad \Theta \coloneqq \left\{0, \frac{\pi}{2}, \pi, \frac{3 \pi}{2} \right\}. \nonumber
\end{eqnarray}

Before defining the appropriate vector-valued Radon measures, we set a few notations. Given two points  $\bs{a}_1, \bs{a}_2 \in \mathbb{R}^3$, where $\bs{a}_1 \neq \bs{a}_2$, we denote the line segment whose end points are $\bs{a}_1$ and $\bs{a}_2$ as
\begin{eqnarray}
\overline{\bs{a}_1 \bs{a}_2} \coloneqq \{(1-t) \bs{a}_1 + t \bs{a}_2 | \; t \in [0, 1]\},
\label{Notation: line seg}
\end{eqnarray}
whereas to denote the ray that starts at $\bs{a}_1$ and goes all the way up to infinity, passing through the point $\bs{a}_2$ as 
\begin{eqnarray}
\overrightarrow{\bs{a}_1 \bs{a}_2} \coloneqq \{(1-t) \bs{a}_1 + t \bs{a}_2 | \; t \in [0, \infty) \}.
\label{Notation: ray}
\end{eqnarray}
For a given $S \subseteq \mathbb{R}^3$ and $\varepsilon > 0$, we denote  the $\varepsilon-$neighborhood of the set $S$ by
\begin{eqnarray}
S^{\varepsilon} \coloneqq \{\bs{x} \in \mathbb{R}^3 \, | \, \dist(\bs{x}, S) \leq \varepsilon \}.
\label{epsilon neighborhood}
\end{eqnarray}
Finally, $\mathcal{H}^1$ denotes the Hausdorff measure of dimension one.

\begin{table}[t]
\rule{\textwidth}{0.4pt} \\[-10pt]
\rule{\textwidth}{0.4pt} 
\vspace{-0.5cm}
\begin{subequations}
\begin{align}
&\ell_0 \coloneqq \overrightarrow{\bs{p}_2 \bs{p}_1}         &  &\bs{e}_{0} \coloneqq \bs{e}_z             &  &\nu_{0} \coloneqq \bs{e}_{0} \mathcal{H}^1 \llcorner \ell_{0}\\
&\ell_{1, \theta} \coloneqq \overline{\bs{p}_{2, \theta} \, \bs{p}_{3, \theta}}        &  &\bs{e}_{1, \theta} \coloneqq \frac{1}{4}\frac{\bs{p}_{3, \theta} - \bs{p}_{2, \theta}}{|\bs{p}_{3, \theta} - \bs{p}_{2, \theta}|}   &  &\nu_{1} \coloneqq \sum_{\theta \in \Theta} \bs{e}_{1, \theta} \mathcal{H}^1 \llcorner \ell_{1, \theta}\\
&\ell_{2, \theta} \coloneqq \overrightarrow{\bs{p}_{3, \theta} \, \bs{p}_{4, \theta}} &  &\bs{e}_{2, \theta} \coloneqq \frac{\bs{e}_z}{4}           &  &\nu_{2} \coloneqq  \sum_{\theta \in \Theta} \bs{e}_{2, \theta} \mathcal{H}^1 \llcorner \ell_{2, \theta}\\
&\ell_{3, \theta} \coloneqq \overrightarrow{\bs{q}_{2, \theta}^{1, 1} \, \bs{q}_{1, \theta}^{0, 1} \,} & &\bs{e}_{3, \theta} \coloneqq -\frac{\bs{e}_z}{4} & &\nu_{3} \coloneqq \sum_{\theta \in \Theta} \bs{e}_{3, \theta}\mathcal{H}^1 \llcorner \ell_{3, \theta}\\
&\ell_{4, \theta} \coloneqq \overline{\bs{q}_{2, \theta}^{-1, 1} \, \bs{q}_{2, \theta}^{1, 1} \,} & &\bs{e}_{4, \theta} \coloneqq -\frac{\bs{e}_z}{8} & &\nu_{4} \coloneqq \sum_{\theta \in \Theta} \bs{e}_{4, \theta}\mathcal{H}^1 \llcorner \ell_{4, \theta}\\
&\ell_{5, \theta}^{\, i, j} \coloneqq \overline{\bs{q}_{2, \theta}^{i, j} \, \bs{q}_{3, \theta}^{i, j}} & &\bs{e}_{5, \theta}^{\, i, j} \coloneqq \frac{1}{16}\frac{\bs{q}_{2, \theta}^{i, j} - \bs{q}_{3, \theta}^{i, j}}{|\bs{q}_{2, \theta}^{i, j} - \bs{q}_{3, \theta}^{i, j}|} & &\nu_{5} \coloneqq \sum_{\theta \in \Theta} \sum_{i, j \in J} \bs{e}_{5, \theta}^{\, i, j}\mathcal{H}^1 \llcorner \, \ell_{5, \theta}^{\, i, j}\\
&\ell_{6, \theta}^{\, i, j} \coloneqq \overline{\bs{q}_{3, \theta}^{i, j} \, \bs{q}_{4, \theta}^{i, j}} & &\bs{e}_{6, \theta}^{\, i, j} \coloneqq \frac{1}{16}\frac{\bs{q}_{3, \theta}^{i, j} - \bs{q}_{4, \theta}^{i, j}}{|\bs{q}_{3, \theta}^{i, j} - \bs{q}_{4, \theta}^{i, j}|} & &\nu_{6} \coloneqq \sum_{\theta \in \Theta} \sum_{i, j \in J} \bs{e}_{6, \theta}^{\, i, j}\mathcal{H}^1 \llcorner \, \ell_{6, \theta}^{\, i, j}\\
&\ell_{7, \theta}^{\, i, j} \coloneqq \overrightarrow{\bs{q}_{4, \theta}^{i, j} \, \bs{q}_{5, \theta}^{i, j} \,} & &\bs{e}_{7, \theta}^{\, i, j} \coloneqq -\frac{\bs{e}_z}{16} & &\nu_{7} \coloneqq \sum_{\theta \in \Theta} \sum_{i, j \in J} \bs{e}_{7, \theta}^{\, i, j}\mathcal{H}^1 \llcorner \, \ell_{7, \theta}^{\, i, j}\\
&\ell_{8, \theta} \coloneqq \overrightarrow{\bs{q}_{2, \theta}^{0, 1} \, \bs{q}_{1, \theta}^{0, 1} \;} & &\bs{e}_{8, \theta} \coloneqq -\frac{\bs{e}_z}{4} & &\nu_{8} \coloneqq  \sum_{\theta \in \Theta} \bs{e}_{8, \theta} \mathcal{H}^1 \llcorner \ell_{8, \theta}\\
&\ell_{9, \theta} \coloneqq \ol{\bs{p}_{2} \, \bs{q}_{2, \theta}^{0, 1} \,} & &\bs{e}_{9, \theta} \coloneqq \frac{1}{4} \frac{\bs{q}_{2, \theta}^{0, 1} - \bs{p}_{2}}{|\bs{q}_{2, \theta}^{0, 1} - \bs{p}_{2}|} & &\nu_{9} \coloneqq \sum_{\theta \in \Theta} \bs{e}_{9, \theta}\mathcal{H}^1 \llcorner \, \ell_{9, \theta}
\end{align}
\label{Construction: vip definitions}
\end{subequations} \\[-15pt]
\rule{\textwidth}{0.4pt} \\[-10pt]
\rule{\textwidth}{0.4pt} 
\caption{A few useful definitions: line segments or rays (column one), vectors in $\mathbb{R}^3$ (column two) and vector-valued measures (column three).}
\label{Table: ell nu}
\end{table}

Using Table \ref{Table: ell nu}, we  now define a few vector-valued measures as
\begin{subequations}
\begin{eqnarray}
&& \nu^u \coloneqq \nu_{0} + \nu_{1} + \nu_{2}, \\
&& \nu^d \coloneqq \nu_{3} + \nu_{4} + \nu_{5} + \nu_{6} + \nu_{7}, \\
%&& \nu \coloneqq \nu^u + \nu^d  = \nu_{0} + \nu_{1} + \nu_{2} + \nu_{3} + \nu_{4} + \nu_{5} + \nu_{6} + \nu_{7}, \\
&& \nu_b \coloneqq \nu_{0} +  \nu_{8} + \nu_{9}.
\end{eqnarray}
\label{Construction: a few imp. measures}
\end{subequations}
The measure $\nu^u$ will be used in constructing the upward moving part of the flow field $\ol{\bs{u}}$ and $\nu^d$ will be used for constructing the downward moving part of the flow field, whereas, $\nu_b$ will be useful in constructing the flow field $\ol{\bs{u}}_b$. We also define a few useful sets as
\begin{subequations}
\begin{eqnarray}
&& S^{u} \coloneqq \ell_0 \cup \bigcup_{\theta \in \Theta} \ell_{1, \theta} \cup \ell_{2, \theta}, \\
&& S^{d} \coloneqq \bigcup_{\theta \in \Theta} \left(\ell_{3, \theta} \cup \ell_{4, \theta} \cup \bigcup_{i, j \in J} \ell_{5, \theta}^{i, j} \cup \ell_{6, \theta}^{i, j} \cup \ell_{7, \theta}^{i, j} \right), \\
&& S \coloneqq S^{u} \cup S^{d}, \\
&& S_{b} \coloneqq \ell_0 \cup \bigcup_{\theta \in \Theta} \ell_{8, \theta} \cup \ell_{9, \theta}.
\end{eqnarray}
\label{Construction: a few imp. setts}
\end{subequations}

To regularize the measures, we define a family of mollifiers. Let $\varphi \in C^\infty_c(\mathbb{R}^3)$ be any radial bump function whose support lies in $|\bs{x}| \leq 1$, such as
\begin{eqnarray}
\varphi(\bs{x}) \coloneqq \ol{\varphi}(|\bs{x}|),
\end{eqnarray}
where $\ol{\varphi}:\mathbb{R} \to \mathbb{R}$ is defined as
\begin{eqnarray}
\ol{\varphi}(r) \coloneqq 
\begin{cases}
c \exp \left(\frac{1}{r^2 - 1}\right) \quad \text{if} \quad |r| < 1, \\
0 \quad \text{if} \quad |r| \geq 1,
\end{cases}
\label{Construction: over bar phi}
\end{eqnarray} 
and $c$ is chosen such that $\int_{\mathbb{R}^3} \varphi(\bs{x}) \, \text{d}  \bs{x} = 1$.
For any $\varepsilon > 0$, we then define a standard mollifier as
\begin{eqnarray}
\varphi_\varepsilon(\bs{x}) \coloneqq \frac{1}{\varepsilon^3}\varphi\left(\frac{\bs{x}}{\varepsilon}\right).
\label{Construction: phi epsilon}
\end{eqnarray}
We use this definition of mollifier and the measures (\ref{Construction: a few imp. measures}{\color{blue}a-d}) to define the velocity fields
\addtocounter{equation}{1}
\begin{align}
\begin{rcases}
\overline{\bs{u}}^u_1 \coloneqq \varphi_{\gamma} * \nu^u, & \qquad \overline{\bs{u}}^u_2 \coloneqq \varphi_{\frac{\gamma}{2}} * \nu^u, \nonumber \\
\overline{\bs{u}}^d_1 \coloneqq \varphi_{\gamma} * \nu^d, & \qquad \overline{\bs{u}}^d_2 \coloneqq \varphi_{\frac{\gamma}{2}} * \nu^d, \nonumber \\
\ol{\bs{u}}_b \coloneqq \varphi_{\gamma} * \nu_b. &
\end{rcases}
\tag{\theequation a-e}
\label{Construction: a few imp. vel. fields}
\end{align}
From the definition of $\varphi_\varepsilon$ in (\ref{Construction: phi epsilon}) and the definition of the velocity fields (\ref{Construction: a few imp. vel. fields}), we see that
\addtocounter{equation}{1}
\begin{align}
\begin{rcases}
    \supp \ol{\bs{u}}_1^{\, u} \cup \supp \ol{\bs{u}}_2^{\, u} \subseteq S^{\, u, \gamma},\qquad &  \supp \ol{\bs{u}}_1^{\, d} \cup \supp \ol{\bs{u}}_2^{\, d} \subseteq S^{\, d, \gamma},  \nonumber \\
    \supp \ol{\bs{u}}_{b} \subseteq S_{b}^{ \gamma}.  
\end{rcases} \nonumber
\tag{\theequation a-c}
\end{align}
Here, we added $\gamma$ in the superscripts to mean $\gamma$-neighborhood of the sets (see definition (\ref{epsilon neighborhood})). Also, from the definition (\ref{Construction: a few imp. vel. fields}), we see that all the velocity fields belong to $L^\infty(\mathbb{R}^3; \mathbb{R}^3)$.

Our next task is to show that the velocity fields as defined in (\ref{Construction: a few imp. vel. fields}) belong to $C^\infty(\mathbb{R}^3; \mathbb{R}^3)$ and are divergence free. We start with the following definition.
\begin{definition}[Kirchhoff's junction]
Let $\widehat{\bs{p}} \in \mathbb{R}^3$ be a point and $\wh{\bs{e}}_j \in \mathbb{R}^3$, for $j = 1$ to $n \in \mathbb{N}$, be different non-zero vectors. Also, let $o_j \in \{-1, 1\}$, for $j \in \{1, \dots n\}$, be $n$ numbers. We say $\widehat{\bs{p}}$ together with the set of pairs $\wh{\bs{e}}_j$ and $o_j$ forms a  Kirchhoff's junction if
\begin{eqnarray}
\sum_{j = 1}^{n} o_j |\wh{\bs{e}}_j| = 0.
\end{eqnarray}
\end{definition}
For every Kirchhoff's junction defined above, we can associate a vector-valued Radon measure $\wh{\nu}$. First define $n$ rays emanating from $\widehat{\bs{p}}$ as $\wh{\ell}_j \coloneqq \{\bs{y}_j(t)\, | \, t \in [0, \infty)\},$
where $\bs{y}_j: \mathbb{R} \to \mathbb{R}^3$ are curves which in the parametric form are given by 
$ \bs{y}_j(t) \coloneqq \wh{\bs{p}} + t \wh{\bs{e}}_j/|\wh{\bs{e}}_j|,$ for $t \in [0, \infty).$
Consider the vector-valued Radon measures supported on these rays as
$ \wh{\nu}_j \coloneqq o_j \widehat{\bs{e}}_j \mathcal{H}^1 \llcorner \widehat{\ell}_j.$
Using these measures, we define a measure corresponding to the Kirchhoff's junction as 
\begin{eqnarray}
\wh{\nu} \coloneqq \sum_{j=1}^{n}\wh{\nu}_j.
\label{Construction of the flow: nu}
\end{eqnarray}
Next, we state an important lemma.
\begin{lemma}
\label{Construction of the flow: junction lemma}
Let $\psi \in C^\infty_{c}(\mathbb{R}^3)$ be a radially symmetric mollifier such that the support of $\psi$ lies in $|\bs{x}| \leq \varepsilon$, for some $\varepsilon > 0$. Assume that $\widehat{\bs{p}} \in \mathbb{R}^3$ and a set of $n$ pairs, $\wh{\bs{e}}_j \in \mathbb{R}^3$ and $o_j \in \{-1, 1\}$, for $j = 1$ to $n \in \mathbb{N}$, forms a Kirchhoff's junction. Let $\wh{\nu}$ be the associated vector-valued Radon measure to this junction. Then the velocity field given by
$\widehat{\bs{u}}\coloneqq \psi * \widehat{\nu}$
belongs to $C^\infty(\mathbb{R}^3; \mathbb{R}^3)$ and is divergence-free.
\end{lemma}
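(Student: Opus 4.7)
The plan is to prove smoothness by a standard mollifier argument and to prove divergence-freeness by showing that $\nabla \cdot \widehat{\nu} = 0$ in the sense of distributions, where the Kirchhoff condition is exactly what makes the distributional boundary terms cancel.

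For the smoothness, I would observe that for each $\bs{x} \in \mathbb{R}^3$ and each $j$, the integrand $t \mapsto \psi(\bs{x} - \bs{y}_j(t))$ has support contained in $\{t \geq 0 : |\bs{x} - \bs{y}_j(t)| \leq \varepsilon\}$, which is a (possibly empty) compact interval because $\bs{y}_j$ is a straight ray parametrized by arc length. Therefore
\begin{equation*}
\widehat{\bs{u}}(\bs{x}) = \sum_{j=1}^{n} o_j \widehat{\bs{e}}_j \int_0^\infty \psi(\bs{x} - \bs{y}_j(t))\, {\rm d}t
\end{equation*}
is a finite sum of integrals over compact sets with a $C^\infty_c$ integrand, so differentiation under the integral is justified to all orders, giving $\widehat{\bs{u}} \in C^\infty(\mathbb{R}^3;\mathbb{R}^3)$.

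For the divergence, the cleanest route is to first show that the vector-valued measure $\widehat{\nu}$ is divergence-free as a distribution on $\mathbb{R}^3$, and then use the fact that convolution with the smooth mollifier $\psi$ commutes with distributional differentiation, so $\nabla \cdot \widehat{\bs{u}} = \psi * (\nabla \cdot \widehat{\nu}) = 0$. To see that $\nabla \cdot \widehat{\nu} = 0$, I take an arbitrary test function $\phi \in C^\infty_c(\mathbb{R}^3)$ and compute
\begin{equation*}
\langle \nabla \cdot \widehat{\nu}, \phi \rangle = -\int_{\mathbb{R}^3} \nabla \phi \cdot {\rm d} \widehat{\nu} = -\sum_{j=1}^{n} o_j \int_{\widehat{\ell}_j} \nabla \phi \cdot \widehat{\bs{e}}_j \, {\rm d}\mathcal{H}^1.
\end{equation*}
Parametrizing each ray by $\bs{y}_j(t) = \widehat{\bs{p}} + t\, \widehat{\bs{e}}_j/|\widehat{\bs{e}}_j|$ and using the chain rule $\frac{d}{dt} \phi(\bs{y}_j(t)) = \nabla \phi(\bs{y}_j(t)) \cdot \widehat{\bs{e}}_j/|\widehat{\bs{e}}_j|$, each line integral becomes a one-dimensional fundamental-theorem-of-calculus computation which evaluates to $-|\widehat{\bs{e}}_j|\phi(\widehat{\bs{p}})$ because $\phi$ vanishes at infinity along the ray. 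Collecting the signs yields
\begin{equation*}
\langle \nabla \cdot \widehat{\nu}, \phi \rangle = \phi(\widehat{\bs{p}}) \sum_{j=1}^{n} o_j |\widehat{\bs{e}}_j| = 0,
\end{equation*}
by the Kirchhoff condition. This holds for all $\phi$, so $\nabla \cdot \widehat{\nu} = 0$ distributionally, and the conclusion follows.

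The main conceptual content is really a one-line algebraic identity: the Kirchhoff sum is precisely the residue of $\nabla \cdot \widehat{\nu}$ at the junction $\widehat{\bs{p}}$, viewed as a delta distribution. Away from $\widehat{\bs{p}}$ each piece $\widehat{\nu}_j$ is a constant vector times one-dimensional Hausdorff measure on a ray parallel to that vector, and such a measure is automatically divergence-free (flow in equals flow out along any cross section). The only possible obstruction is therefore at the single point $\widehat{\bs{p}}$, and I expect the main care needed is to handle this cleanly in the distributional pairing — in particular, justifying that the ray integrals converge absolutely so the interchange of sum and distributional derivative is valid, which is automatic because $\phi$ has compact support and only finitely many rays meet $\supp \phi$ in compact sub-segments.
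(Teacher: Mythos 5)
Your proof is correct and rests on the same key move as the paper's: a one-dimensional fundamental-theorem-of-calculus computation along each ray that isolates the boundary contribution at $\widehat{\bs{p}}$, which then cancels by the Kirchhoff condition. The only difference is one of framing. The paper differentiates the convolved field directly, writing $(\nabla\cdot\widehat{\bs{u}})(\bs{x}_0)=\sum_j o_j\int_0^\infty \widehat{\bs{e}}_j\cdot\nabla\psi(\bs{x}_0-\bs{y}_j(t))\,{\rm d}t$, converts $\widehat{\bs{e}}_j\cdot\nabla\psi(\bs{x}_0-\bs{y}_j(t))$ into $-|\widehat{\bs{e}}_j|\,\partial_t\psi(\bs{x}_0-\bs{y}_j(t))$, and integrates to get $\psi(\bs{x}_0-\widehat{\bs{p}})\sum_j o_j|\widehat{\bs{e}}_j|=0$ pointwise. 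You instead first prove $\nabla\cdot\widehat{\nu}=0$ distributionally by pairing against a test function $\phi$ and running the same FTC argument on $\phi$ rather than on $\psi$, and then appeal to the fact that mollification commutes with distributional differentiation. These are dual versions of the identical algebra; yours arguably isolates the conceptual content (the measure $\widehat{\nu}$ itself is divergence-free) more cleanly and would extend verbatim to non-radial or non-compactly-supported mollifiers, while the paper's is entirely self-contained and avoids invoking the convolution/derivative commutation rule. Either is a complete proof.
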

\begin{proof}[Proof of Lemma \ref{Construction of the flow: junction lemma}]
By differentiating under the integral sign in the expression of $\widehat{\bs{u}}$, we immediately see that $\widehat{\bs{u}} \in C^\infty(\mathbb{R}^3; \mathbb{R}^3)$. Next, for any $\bs{x}_0 \in \mathbb{R}^3$, the following calculation holds
\begin{eqnarray}
&& (\nabla \cdot \wh{\bs{u}})(\bs{x}_0) = \sum_{j = 1}^n o_j \int_{\mathbb{R}^3} \widehat{\bs{e}}_j \cdot \nabla \psi(\bs{x}_0 - \bs{y})  \, {\rm d} \mathcal{H}^1 \llcorner \widehat{\ell}_j(\bs{y})   = \sum_{j = 1}^n o_j \int_{0}^\infty \widehat{\bs{e}}_j \cdot \nabla \psi(\bs{x}_0 - \bs{y}_j(t_j))  \, {\rm d} t_j \nonumber \\ && = - \sum_{j = 1}^n o_j |\wh{\bs{e}}_j| \int_{0}^\infty \frac{\partial \psi(\bs{x}_0 - \bs{y}_j(t_j))}{\partial t_j}  \, {\rm d} t_j  = - \sum_{j = 1}^n o_j |\wh{\bs{e}}_j| \left. \psi(\bs{x}_0 - \bs{y}_j(t_j)) \right|_0^{\infty} = \psi(\wh{\bs{p}})\sum_{j=1}^{n} o_j |\wh{\bs{e}}_j|. \nonumber
\end{eqnarray}
Finally, using the assumption of the Kirchhoff's junction, implies $\nabla \cdot \wh{\bs{u}} \equiv 0$.
\end{proof}

\begin{corollary}
\label{Construction of the flow: junction corollary}
Let $\psi \in C^\infty_{c}(\mathbb{R}^3)$ be a radially symmetric mollifier such that the support of $\psi$ lies in $|\bs{x}| \leq \varepsilon$, for some $\varepsilon > 0$. Let $\widehat{\bs{p}}_i \in \mathbb{R}^3$, where $i \in \{1, \dots m\}$, be $m$ points which are part of $m$ different Kirchhoff's junctions and let $\wh{\nu}_i$ be the vector-valued Radon measures associated to each of the Kirchhoff's junction. Then for the vector-valued Radon measure defined as $\reallywidecheck{\nu} \coloneqq \sum_{i=1}^m \wh{\nu}_i$, the velocity field given by $\reallywidecheck{\bs{u}} \coloneqq \psi * \reallywidecheck{\nu}$
belongs to $C^\infty(\mathbb{R}^3; \mathbb{R}^3)$ and is divergence-free.
\end{corollary}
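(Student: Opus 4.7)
The plan is to reduce this statement directly to Lemma \ref{Construction of the flow: junction lemma} by exploiting the linearity of the convolution operation and of the divergence operator. First I would observe that since convolution of a smooth compactly supported function with a Radon measure commutes with finite sums of measures, we have
\begin{eqnarray}
\reallywidecheck{\bs{u}} = \psi * \reallywidecheck{\nu} = \psi * \left( \sum_{i = 1}^m \wh{\nu}_i \right) = \sum_{i = 1}^m \psi * \wh{\nu}_i = \sum_{i = 1}^m \wh{\bs{u}}_i, \nonumber
\end{eqnarray}
where $\wh{\bs{u}}_i \coloneqq \psi * \wh{\nu}_i$ is the velocity field associated to the $i$-th Kirchhoff junction.

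Next I would apply Lemma \ref{Construction of the flow: junction lemma} individually to each of the $m$ Kirchhoff junctions. The hypotheses of the lemma are met for every index $i$, since $\wh{\bs{p}}_i$ and its associated set of pairs $(\wh{\bs{e}}_j, o_j)$ form a Kirchhoff's junction by assumption, and the mollifier $\psi$ is the same one used in the statement of the corollary. The lemma then yields $\wh{\bs{u}}_i \in C^\infty(\mathbb{R}^3; \mathbb{R}^3)$ with $\nabla \cdot \wh{\bs{u}}_i \equiv 0$ for every $i \in \{1, \dots, m\}$.

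Finally, a finite sum of $C^\infty$ vector fields is $C^\infty$, and since the divergence operator is linear, we have $\nabla \cdot \reallywidecheck{\bs{u}} = \sum_{i=1}^m \nabla \cdot \wh{\bs{u}}_i = 0$. This would conclude the proof. There is no genuine obstacle here; the corollary is purely a bookkeeping statement that packages the additivity of the convolution and divergence operators on top of the single-junction lemma, and the only mild subtlety to mention is the justification for swapping the convolution with the finite sum, which is immediate because each $\wh{\nu}_i$ is a vector-valued Radon measure and $\psi \in C_c^\infty(\mathbb{R}^3)$, so each individual convolution $\psi * \wh{\nu}_i$ is a well-defined smooth function.
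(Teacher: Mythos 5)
Your proof is correct and matches the intended argument: the paper states this corollary without proof precisely because it follows immediately from Lemma \ref{Construction of the flow: junction lemma} by the linearity of convolution and of the divergence operator, exactly as you write. There is nothing further to add.
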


\begin{lemma} \label{Construction of the flow: measures on rays}
Let $\wh{\bs{p}}_1$ and $\wh{\bs{p}}_2$ be two different points in $\mathbb{R}^3$. Let $\wh{\ell}_{12} = \ol{\wh{\bs{p}}_1 \wh{\bs{p}}_2}$ and $\wh{\bs{e}}_{12} = c (\wh{\bs{p}}_2 - \wh{\bs{p}}_1)$ for some $c > 0$. Then a vector-valued measure defined as $\wh{\nu}_{12} \coloneqq \wh{\bs{e}}_{12} \mathcal{H}^1 \llcorner \wh{\ell}_{12}$ can also be written as
$ \wh{\nu}_{12} = \wh{\nu}_{in} + \wh{\nu}_{out},$
where $\wh{\nu}_{in} = (-\wh{\bs{e}}_{12}) \mathcal{H}^1 \llcorner \wh{\ell}_{in}$, $\wh{\nu}_{out} = \wh{\bs{e}}_{12} \mathcal{H}^1 \llcorner \wh{\ell}_{out}$,
$\wh{\ell}_{in} = \{\wh{\bs{p}}_2 + t \bs{e}_{12} \, | \, t \in [0, \infty) \},$ and $\wh{\ell}_{out} = \{\wh{\bs{p}}_1 + t \bs{e}_{12} \, | \, t \in [0, \infty) \}.$
\end{lemma}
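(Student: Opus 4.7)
The plan is purely geometric followed by a trivial measure-theoretic step: I would show that the outgoing ray $\wh{\ell}_{out}$ decomposes, up to a single point, as the segment $\wh{\ell}_{12}$ joined with the ingoing ray $\wh{\ell}_{in}$, and then exploit additivity of Hausdorff measure on this decomposition. This picture is the reason the statement should be expected at all: $\wh{\ell}_{out}$ starts at $\wh{\bs{p}}_1$, is forced to pass through $\wh{\bs{p}}_2$ by the sign assumption on $c$, and thereafter coincides with $\wh{\ell}_{in}$.

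First I would verify the set-level identity. Because $\wh{\bs{e}}_{12} = c(\wh{\bs{p}}_2 - \wh{\bs{p}}_1)$ with $c > 0$, the parametrization $t \mapsto \wh{\bs{p}}_1 + t\, \wh{\bs{e}}_{12}$ of $\wh{\ell}_{out}$ sends $t = 0$ to $\wh{\bs{p}}_1$ and $t = 1/c$ to $\wh{\bs{p}}_2$, and for $t \geq 1/c$ it sweeps out precisely $\wh{\ell}_{in}$. Hence
\begin{equation*}
\wh{\ell}_{out} \;=\; \wh{\ell}_{12} \cup \wh{\ell}_{in}, \qquad \wh{\ell}_{12} \cap \wh{\ell}_{in} \;=\; \{\wh{\bs{p}}_2\}.
\end{equation*}

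Next I would pass to the associated Radon measures. Since $\mathcal{H}^1(\{\wh{\bs{p}}_2\}) = 0$, countable additivity of $\mathcal{H}^1$ on Borel sets yields the identity $\mathcal{H}^1 \llcorner \wh{\ell}_{out} = \mathcal{H}^1 \llcorner \wh{\ell}_{12} + \mathcal{H}^1 \llcorner \wh{\ell}_{in}$ of scalar Radon measures. Multiplying through by the constant vector $\wh{\bs{e}}_{12}$ and rearranging gives
\begin{equation*}
\wh{\bs{e}}_{12}\,\mathcal{H}^1 \llcorner \wh{\ell}_{12} \;=\; \wh{\bs{e}}_{12}\,\mathcal{H}^1 \llcorner \wh{\ell}_{out} \;+\; \bigl(-\wh{\bs{e}}_{12}\bigr)\,\mathcal{H}^1 \llcorner \wh{\ell}_{in},
\end{equation*}
in which the left-hand side is $\wh{\nu}_{12}$ and the right-hand side is $\wh{\nu}_{out} + \wh{\nu}_{in}$ by the very definitions in the statement. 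There is no genuine obstacle; the only care needed is bookkeeping of orientations, and the sign convention $\wh{\nu}_{in} = -\wh{\bs{e}}_{12}\,\mathcal{H}^1 \llcorner \wh{\ell}_{in}$ is exactly what converts ``removing the excess tail of $\wh{\ell}_{out}$'' into the clean additive decomposition claimed.
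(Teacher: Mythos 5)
Your proof is correct and takes essentially the same route as the paper: the paper decomposes $\wh{\nu}_{out}$ as $\wh{\nu}_{out}\llcorner\wh{\ell}_{12}+\wh{\nu}_{out}\llcorner\wh{\ell}_{in}$ and observes the cancellation of the tail, which is precisely your set-level decomposition $\wh{\ell}_{out}=\wh{\ell}_{12}\cup\wh{\ell}_{in}$ (overlap a single $\mathcal{H}^1$-null point) passed through the restriction operator and then multiplied by the constant vector $\wh{\bs{e}}_{12}$. The bookkeeping of the sign on $\wh{\nu}_{in}$ matches the paper's ``zero measure'' cancellation step.
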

\begin{proof}[Proof of \ref{Construction of the flow: measures on rays}]
We can write $
\wh{\nu}_{out} = \wh{\nu}_{out} \llcorner \wh{\ell}_{12} + \wh{\nu}_{out} \llcorner \wh{\ell}_{in}.$
Now $\wh{\nu}_{out} \llcorner \wh{\ell}_{12}$ coincides with $\wh{\nu}_{12}$ in $\mathbb{R}^3$, whereas $\wh{\nu}_{in} + \wh{\nu}_{out} \llcorner \wh{\ell}_{in}$ is a zero measure, which then finishes the proof.
\end{proof}

A tedious verification shows that using Lemma \ref{Construction of the flow: measures on rays}, the vector-valued measures (\ref{Construction: a few imp. measures}{\color{blue}a-c}) can be written as a sum of vector-valued measures associated with different Kirchhoff's junctions. Therefore, the velocity fields as defined in (\ref{Construction: a few imp. vel. fields}) belong to $C^\infty(\mathbb{R}^3; \mathbb{R}^3)$ and are divergence free. Here, we write down the Kirchhoff's junctions such that the sum of associated vector-valued measures is $\nu^u$:
\begin{center}
\begin{tabular}{ c c c }
 Junction No. & The point $\wh{\bs{p}}$ & The set of pairs of $\wh{\bs{e}}_j$ and $o_j$ \\ 
 1 & $\bs{p}_2$ & $\left\{\left(\bs{e}_z, 1\right), \left(\frac{\bs{e}_x}{4}, -1\right), \left(\frac{\bs{e}_y}{4}, -1\right), \left(-\frac{\bs{e}_x}{4}, -1\right), \left(-\frac{\bs{e}_y}{4}, -1\right)\right\}$ \\ [5pt] 
 2 & $\bs{p}_{3, 0}$ & $\left\{\left(-\frac{\bs{e}_x}{4}, 1\right), \left(\frac{\bs{e}_z}{4}, -1\right)\right\}$\\ [5pt]   
 3 & $\bs{p}_{3, \frac{\pi}{2}}$ & $\left\{\left(-\frac{\bs{e}_y}{4}, 1\right), \left(\frac{\bs{e}_z}{4}, -1\right)\right\}$\\ [5pt]  
 4 & $\bs{p}_{3, \pi}$ & $\left\{\left(\frac{\bs{e}_x}{4}, 1\right), \left(\frac{\bs{e}_z}{4}, -1\right)\right\}$\\ [5pt]   
 5 & $\bs{p}_{3, \frac{3 \pi}{2}}$ & $\left\{\left(\frac{\bs{e}_y}{4}, 1\right), \left(\frac{\bs{e}_z}{4}, -1\right)\right\}$
\end{tabular}
\end{center}
It can be shown that a similar decomposition exists for the other three measures defined in (\ref{Construction: a few imp. measures}).

\paragraph{Patching up $\ol{\bs{u}}_1$ and $\ol{\bs{u}}_2$: Construction in the reducer region} \noindent
\\
\\
To design the velocity field $\ol{\bs{u}}$, we need to patch the velocity fields $\ol{\bs{u}}_1$ and $\ol{\bs{u}}_2$  by defining an appropriate velocity field in the reducer region. Therefore, at this point, we shift our focus to designing velocity field in the reducer region. 

We start by considering a simple example of one reducer, where we design such a velocity field. Let's define a function $m: \mathbb{R} \to \mathbb{R}$ as
\begin{eqnarray}
m(r) \coloneqq \frac{1}{\gamma^3} \int_{-\infty}^{\infty} \ol{\varphi} \left(\frac{\sqrt{x^{\prime 2} + r^2}}{\gamma}\right) \, {\rm d} x^\prime,
\label{Construction: the function h}
\end{eqnarray}
where $\ol{\varphi}$ is defined in (\ref{Construction: over bar phi}). In what follows, we will use $$\varrho \text{ as a placeholder for } \sqrt{y^2+z^2}$$ in rest of the section. With these definitions in hand, we define two velocity fields $\ol{\bs{u}}_s, \ol{\bs{u}}_e:\mathbb{R}^3 \to \mathbb{R}^3$ as
\begin{eqnarray}
\ol{\bs{u}}_s(\bs{x}) \coloneqq (\ol{u}_{x, s}, \ol{u}_{y, s}, \ol{u}_{z, s}) \coloneqq (m(\varrho), 0, 0), \quad \ol{\bs{u}}_e(\bs{x}) \coloneqq (\ol{u}_{x, e}, \ol{u}_{y, e}, \ol{u}_{z, e}) \coloneqq (4 m(2 \varrho), 0, 0) \quad \text{for} \quad \bs{x} \in \mathbb{R}^3. \nonumber \\
\label{Construction of the flow: red start end vel}
\end{eqnarray}
As $\ol{\varphi}$ has a compact support, therefore, $\ol{\bs{u}}_s, \ol{\bs{u}}_e \in L^\infty(\mathbb{R}^3, \mathbb{R}^3)$. The arguments given in Appendix \ref{Appendix: A few basic lemmas} show that both of these velocity fields also belong to $C^\infty(\mathbb{R}^3, \mathbb{R}^3)$. Furthermore, it is clear that both the velocity fields are divergence free. Finally, one can verify that the volume flux through any  plane parallel to the $yz$-plane is same for both the velocity fields.

The task at hand is to come up with a divergence free velocity field $\ol{\bs{u}}_c$ such that it coincides with $\bs{u}_s$ in the region $x \leq 0$ and it coincides with $\bs{u}_e$ in the region $ \gamma \leq x$, and it belongs to $L^\infty(\mathbb{R}^3; \mathbb{R}^3) \cap C^\infty(\mathbb{R}^3, \mathbb{R}^3)$. To ensure the required velocity field is divergence free, we work with streamfunctions.  The strategy is to define the velocity field in the reducer region ($0 < x < \gamma$) based on a streamfunction which smoothly matches with streamfunction corresponding to the velocity field $\bs{u}_s$ for $x \leq 0$ and with streamfunction corresponding to the velocity field $\bs{u}_e$ for $x \geq \gamma$. To pursue this idea, we define two functions $\Psi_s, \Psi_e : \mathbb{R} \to \mathbb{R}$ as
\begin{eqnarray}
\Psi_s(r) \coloneqq \int_{0}^{|r|} r^\prime m(r^\prime) \, {\rm d} r^\prime, \qquad \Psi_e(r) \coloneqq 4 \int_{0}^{|r|} r^\prime m(2 r^\prime) \, {\rm d} r^\prime. \nonumber
\label{Construction: Psis Psie}
\end{eqnarray}
Next, we define $\Psi_{c}:\mathbb{R}^2 \to \mathbb{R}$ as
\begin{eqnarray}
\Psi_{c}(x, r) \coloneqq (1 - \eta_\gamma(x)) \Psi_s(r) + \eta_\gamma(x) \Psi_e(r), \nonumber
\end{eqnarray}
where $\eta_\varepsilon = \eta(x/\varepsilon)$ and $\eta$ is a smooth cut-off function such that $\eta \equiv 0$ for $x \leq 0$ and $\eta \equiv 1$ for $x \geq 1$.

The function $\Psi_{c}(x, r)$ may be understood as the axisymmetric streamfunction of the desired velocity field. With the help of $\Psi_{c}(x, r)$, we are ready to define the components of the velocity field that we wish to construct as
\begin{eqnarray}
&& \overline{u}_{x, c}(\bs{x}) \coloneqq (1 - \eta_\gamma(x)) m(\varrho) + 4 m(2\varrho) \eta_\gamma(x) \nonumber \\
&& \overline{u}_{y, c}(\bs{x}) \coloneqq
\begin{cases}
 y \frac{d \eta_\gamma}{d x} \frac{1}{\varrho^2} (\Psi_s(\varrho) - \Psi_e(\varrho)) \quad \text{if} \quad \varrho \neq 0,  \\
0 \quad \text{if} \quad y,z = 0.
\end{cases} \nonumber \\
&& \overline{u}_{z, c}(\bs{x}) \coloneqq
\begin{cases}
 z \frac{d \eta_\gamma}{d x} \frac{1}{\varrho^2} (\Psi_s(\varrho) - \Psi_e(\varrho)) \quad \text{if} \quad \varrho \neq 0, \\
0 \quad \text{if} \quad y,z = 0.
\end{cases} \nonumber
\label{Construction: red VEL flied}
\end{eqnarray}
where $\bs{x} \in \mathbb{R}^3$. The velocity field is then given by
\begin{eqnarray}
\ol{\bs{u}}_c \coloneqq (\overline{u}_{x, c}, \overline{u}_{y, c}, \overline{u}_{z, c}).
\label{Construction of the flow: Red comp vel field}
\end{eqnarray}
With this definition, we state the following lemma.
\begin{lemma} \label{Construction of the flow: reducer lemma}
Let the velocity field $\ol{\bs{u}}_c$ be as defined in (\ref{Construction of the flow: Red comp vel field}). Then it coincides with $\ol{\bs{u}}_s$ when $x \leq 0$ and with $\ol{\bs{u}}_e$ when $\gamma \leq x$. Furthermore, $\ol{\bs{u}}_c \in L^\infty(\mathbb{R}^3; \mathbb{R}^3) \cap C^\infty(\mathbb{R}^3; \mathbb{R}^3)$ and is divergence free with
\begin{eqnarray}
\supp \ol{\bs{u}}_c \subseteq \{(x, y, z) \, | \, y^2 + z^2 \leq \gamma^2\}.
\label{Construction of the flow: reducer lemma: supp}
\end{eqnarray} 
\end{lemma}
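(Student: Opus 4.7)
The plan is to verify each of the four conclusions of the lemma in turn, exploiting the fact that $\ol{\bs{u}}_c$ has an axisymmetric structure about the $x$-axis with Stokes streamfunction essentially $\Psi_c(x,\varrho)$. Throughout, recall that $\ol{\varphi}$ is supported in $[-1,1]$, so $m(r)$ defined in (\ref{Construction: the function h}) vanishes for $|r|\ge \gamma$, and $\eta_\gamma(x)$ equals $0$ for $x\le 0$ and $1$ for $x\ge \gamma$ with $\eta_\gamma'$ vanishing outside $(0,\gamma)$.

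The matching statements at $x\le 0$ and $x\ge \gamma$ are immediate from these properties of $\eta_\gamma$: in each regime both $\eta_\gamma$ (either $0$ or $1$) and $\eta_\gamma'=0$, so $\ol{u}_{y,c}=\ol{u}_{z,c}=0$ while $\ol{u}_{x,c}$ reduces to $m(\varrho)$ or $4m(2\varrho)$ respectively. For the support claim I first establish the key identity $\Psi_e(r)=\Psi_s(2r)$ by the substitution $s=2r'$ in the definition of $\Psi_e$. Consequently, for $\varrho\ge \gamma$ one has $\Psi_s(\varrho)=\Psi_s(\gamma)=\Psi_e(\gamma/2)=\Psi_e(\varrho)$, so the transverse components vanish; meanwhile $m(\varrho)$ and $m(2\varrho)$ both vanish for $\varrho\ge\gamma$, so $\ol{u}_{x,c}=0$ as well, proving $\supp \ol{\bs{u}}_c\subseteq\{\varrho\le\gamma\}$. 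Boundedness in $L^\infty$ then follows because $m$ is bounded and $(\Psi_s-\Psi_e)(\varrho)/\varrho^2$ extends continuously through the axis (see below) while having compact support in $\varrho$.

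The most delicate point, and the one I expect to require the most care, is smoothness at the axis $\varrho=0$, where the formulas for $\ol{u}_{y,c}$ and $\ol{u}_{z,c}$ look singular. The strategy is to show that $\Psi_s$ and $\Psi_e$ extend to smooth \emph{even} functions on $\mathbb{R}$ that vanish to second order at the origin. This in turn rests on showing that $m$ is smooth and even, which follows by differentiating under the integral sign in (\ref{Construction: the function h}) (the integrand is smooth in $r$ and has compact support in $x'$). Writing $\Psi_s(r)=\tfrac{1}{2}\int_0^{r^2}m(\sqrt{s})\,\mathrm{d}s$ and using that $m(\sqrt{s})$ is a smooth function of $s$ (since $m$ is smooth and even), one concludes that $\Psi_s$, and similarly $\Psi_e$, is a smooth function of $r^2=y^2+z^2$ vanishing at the origin. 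Hence $(\Psi_s-\Psi_e)(\varrho)/\varrho^2$ is a smooth function of $(y,z)$, and multiplying by $y$ or $z$ gives smoothness of $\ol{u}_{y,c}$ and $\ol{u}_{z,c}$ on all of $\mathbb{R}^3$. The $x$-dependence enters only through the smooth cutoff $\eta_\gamma$, so $\ol{\bs{u}}_c\in C^\infty(\mathbb{R}^3;\mathbb{R}^3)$.

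Finally, for the divergence-free property I compute directly. Away from the axis it is cleanest to pass to cylindrical coordinates $(x,r,\theta)$ with $r=\varrho$; combining $\ol{u}_{y,c}$ and $\ol{u}_{z,c}$ yields $u_r=\tfrac{1}{r}\eta_\gamma'(x)\bigl(\Psi_s(r)-\Psi_e(r)\bigr)$ and $u_\theta=0$, so
\begin{equation}
\nabla\cdot\ol{\bs{u}}_c=\frac{\partial u_x}{\partial x}+\frac{1}{r}\frac{\partial}{\partial r}(r u_r)=\eta_\gamma'(x)\bigl(4m(2r)-m(r)\bigr)+\frac{1}{r}\eta_\gamma'(x)\bigl(rm(r)-4rm(2r)\bigr)=0,
\end{equation}
using $\Psi_s'(r)=rm(r)$ and $\Psi_e'(r)=4rm(2r)$ for $r>0$. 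The identity extends to the axis by continuity of $\nabla\cdot\ol{\bs{u}}_c$, which is guaranteed by the smoothness established in the previous paragraph. This completes all four claims.
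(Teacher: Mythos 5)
Your proof is correct and follows the same overall strategy as the paper (matching via the cutoff, support via $\Psi_s=\Psi_e$ for $\varrho\ge\gamma$, axis smoothness via the streamfunction structure, divergence-free via the streamfunction), but the way you handle the delicate smoothness-at-the-axis step is genuinely different. The paper builds up a small self-contained machinery in Appendix C: it introduces ``property (N)'' (all odd derivatives at $0$ vanish), proves inductively that dividing the derivative of such a function by $r$ preserves the property, and then concludes that $y^{\alpha_y}z^{\alpha_z}f(\varrho)$ is smooth whenever $f$ has property (N). You instead invoke, in effect, Whitney's theorem that a smooth even function of $r$ is a smooth function of $r^2$; you apply it first to $m$ to see that $m(\sqrt{s})$ is smooth, integrate to see that $\Psi_s$ and $\Psi_e$ are smooth functions of $\varrho^2$ vanishing at the origin, and then divide by $\varrho^2$ to get smoothness of the transverse components. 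The two routes prove essentially the same underlying fact; the paper's property-(N) lemmas \emph{are} a hands-on proof of the even-function-of-$r^2$ principle. Your version is cleaner and leans on a standard classical result, at the cost of quoting a theorem rather than proving it; the paper's version is longer but self-contained. Two further, minor, differences that are worth noting: (i) you give the useful identity $\Psi_e(r)=\Psi_s(2r)$ to streamline the support argument, while the paper simply observes $\Psi_s(\varrho)=\Psi_e(\varrho)$ for $\varrho>\gamma$; and (ii) you verify $\nabla\cdot\ol{\bs{u}}_c=0$ by an explicit cylindrical-coordinate computation, whereas the paper only appeals to the general principle that a field defined from an axisymmetric streamfunction is automatically divergence-free. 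Both choices are correct; yours is more explicit. No gaps.
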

\begin{proof}
With the definition of function $m$ (\ref{Construction: the function h}) and noting that $\Psi_s(\varrho) = \Psi_e(\varrho)$ when $\varrho > \gamma$, we obtain (\ref{Construction of the flow: reducer lemma: supp}).
It is clear by construction that $\ol{\bs{u}}_c$ coincides with $\ol{\bs{u}}_s$ when $x \leq 0$ and with $\ol{\bs{u}}_e$ when $\gamma \leq x$ and therefore it is also infinite differentiable in these regions. To see the infinite differentiability in the region $0 < x < \gamma$, we use Lemma \ref{Construction of the flow: reducer lemma} given in Appendix \ref{Appendix: A few basic lemmas}. Finally, as the velocity field $\ol{\bs{u}}_c$ is defined based on a streamfunction, it is necessarily divergence-free.
\end{proof}

With this lemma in hand, we are ready to patch $\ol{\bs{u}}_1$ and $\ol{\bs{u}}_2$  using $\ol{\bs{u}}_c$ to construct the velocity field $\ol{\bs{u}}$. For this purpose, we define a few points in $\mathbb{R}^3$ as
\begin{eqnarray}
\bs{s}_0 = (1/8, 0, 0), \qquad \bs{s}_1^{i, j} = (1/8, j \delta, -i \delta) \quad \text{for} \quad i, j \in J.
\label{Construction: def of ss}
\end{eqnarray}
We also define two velocity field as
\begin{eqnarray}
\ol{\bs{u}}_{red}^{\, u} \coloneqq \frac{1}{4} T^{\bs{s}_0} \ol{\bs{u}}_c, \qquad  \qquad \ol{\bs{u}}_{red}^{\, d} \coloneqq - \frac{1}{16}\sum_{i,j \in J} T^{\bs{s}^{i, j}_1} \ol{\bs{u}}_c. \nonumber
\end{eqnarray}
As a result of Lemma \ref{Construction of the flow: reducer lemma}, the velocity fields
\begin{eqnarray}
\ol{\bs{u}}^{\, u} \coloneqq \ol{\bs{u}}_1^{\, u} \bs{1}_{\{|x|, |y| \leq 1/8\}}  + \ol{\bs{u}}_2^{\, u} \bs{1}_{\{|x| \geq 1/8+\gamma\} \cup \{|y| \geq 1/8+\gamma\}} + \sum_{\theta \in \Theta} \rho_{\theta} \; \ol{\bs{u}}_{red}^{\, u} \bs{1}_{\{1/8 < x < 1/8 + \gamma\}}, \nonumber \\
\ol{\bs{u}}^{\, d} \coloneqq \ol{\bs{u}}_1^{\, d} \bs{1}_{\{|x|, |y| \leq 1/8\}}  + \ol{\bs{u}}_2^{\, d} \bs{1}_{\{|x| \geq 1/8+\gamma\} \cup \{|y| \geq 1/8+\gamma\}} + \sum_{\theta \in \Theta} \rho_{\theta} \; \ol{\bs{u}}_{red}^{\, d} \bs{1}_{\{1/8 < x < 1/8 + \gamma\}}, \nonumber
\label{Construction: vel ub ud}
\end{eqnarray}
are uniformly bounded, infinitely differentiable, divergence free with $
\supp \ol{\bs{u}}^{\, u} \subseteq S^{\, u, \gamma}$ and $\supp \ol{\bs{u}}^{\, d} \subseteq S^{\, d, \gamma}$.
Finally, we arrive at the definition of the parent construct
\begin{eqnarray}
\ol{\bs{u}} \coloneqq \ol{\bs{u}}^u + \ol{\bs{u}}^b.
\label{Construction: vel v}
\end{eqnarray}
Summarizing the properties of the parent constructs $\ol{\bs{u}}$ and $\ol{\bs{u}}_b$ (from (\ref{Construction: a few imp. vel. fields})), we have $\ol{\bs{u}}, \ol{\bs{u}}_b \in C^\infty(\mathbb{R}^3; \mathbb{R}^3) \cap L^\infty(\mathbb{R}^3; \mathbb{R}^3)$, both obeying $\nabla \cdot \ol{\bs{u}} \equiv 0$ and $\nabla \cdot \ol{\bs{u}}_b \equiv 0$ with
\begin{eqnarray}
\supp \ol{\bs{u}} \subseteq S^{\, \gamma}, \qquad \supp \ol{\bs{u}}_b \subseteq S^{\, \gamma}_b.
\label{Construction: supp vel v}
\end{eqnarray}

Next, define a few points in $\mathbb{R}^3$ as
\begin{eqnarray}
\bs{\tau}_{\theta} \coloneqq \left(\frac{\cos \theta}{4}, \frac{\sin \theta}{4}, -\frac{1}{4}\right) \quad \text{for} \quad \theta \in \Theta. \nonumber
\end{eqnarray}
We now gather an important property of the parent constructs $\ol{\bs{u}}$ and $\ol{\bs{u}}_b$, which is that the velocity fields defined as 
\begin{subequations}
\begin{eqnarray}
&& \wt{\bs{u}} \coloneqq \ol{\bs{u}}(\bs{x}) \bs{1}_{\{z < 1/4\}} + \left(\sum_{\theta \in \Theta} T^{\bs{\tau}_\theta} \ol{\bs{u}}(2 \bs{x}) \right) \bs{1}_{\{z \geq 1/4\}}, \\
&& \wt{\bs{u}}_b  \coloneqq \ol{\bs{u}}(\bs{x}) \bs{1}_{\{z < 1/4\}} + \left(\sum_{\theta \in \Theta} T^{\bs{\tau}_\theta} \ol{\bs{u}}_b(2 \bs{x}) \right) \bs{1}_{\{z \geq 1/4\}}, \\
&& \wt{\bs{u}}_r \coloneqq \overline{\bs{u}}(\bs{x}) \, \bs{1}_{z \geq 0} + \left(-\overline{u}_x(x, y, -z), -\overline{u}_y(x, y, -z), \overline{u}_z(x, y, -z)\right) \, \bs{1}_{z < 0},
\end{eqnarray}
\label{imp. outcome velocity}
\end{subequations}
all belong to $C^\infty(\mathbb{R}^3; \mathbb{R}^3)$. For example, let's look at $\wt{\bs{u}}$. The infinite differentiability away from $z = 1/4$ is clear by definition. However, the velocity fields $\ol{\bs{u}}$ and $$\sum_{\theta \in \Theta} T^{\bs{\tau}_\theta} \ol{\bs{u}}(2 \bs{x})$$ are identical when $7/32 < z < 9/32$, which can be shown by writing down their explicit expressions in this region. Therefore, $\wt{\bs{u}}$ is infinitely differentiable at $z = 1/4$ as well. Similar arguments apply for $\wt{\bs{u}}_b$ and $\wt{\bs{u}}_r$.

\subsubsection{The scalar fields $\ol{\xi}$ and $\ol{\xi}_b$} \label{The parent temperature field}
The construction of $\ol{\xi}$ and $\ol{\xi}_b$ is relatively simple but somewhat different from that of $\ol{\bs{u}}$ and $\ol{\bs{u}}_b$. Recall from the detailed sketch given in section \ref{a detailed sketch}, we want to create a scalar field $\xi$ that is constant in the support of the velocity field $\bs{u}$ (with the exception of the boundary layer). To that end, we first define a few sets consisting of line segments and rays, which in a sense form the skeleton. This step is the same as in the previous subsection. We then consider a $\lambda$-neighborhood of this skeleton for sufficiently small positive $\lambda$.  Next, we mollify the indicator function of this $\lambda$-neighborhood set. If the mollification parameter, which we choose to be $\gamma$, is small compared with $\lambda$, we will have designed a smooth function supported in tubes of radius $\lambda + \gamma$ and constant in tubes of radius $\lambda - \gamma$ enveloping the skeleton. This strategy works everywhere except in the reducer region, where we design the scalar field using a cut-off function similar to the case of the velocity field. We now begin our construction.

In addition to (\ref{Construction: vip points}), we define a few extra points in $\mathbb{R}^3$ as
\begin{eqnarray}
\bs{p}_5 \coloneqq (0, 0, h), \qquad \bs{q}_6 \coloneqq (\delta, \delta, h), \nonumber
\end{eqnarray}
and a few rays 
\begin{eqnarray}
\ell_{10} \coloneqq \overrightarrow{\bs{p}_{5} \, \bs{p}_{1} \,},  \qquad \ell_{11, \theta} \coloneqq \overrightarrow{\bs{q}_{6, \theta} \, \bs{q}_{1, \theta}^{0,1} \;}, \nonumber
\end{eqnarray}
where $\bs{q}_{6, \theta} = \rho_{\theta}(\bs{q}_6)$. Here, we choose
$$h \coloneqq \frac{1}{16}.$$
To complement (\ref{Construction: a few imp. setts}), we also define
\begin{eqnarray}
\wt{S}^u_b \coloneqq \ell_{10}, \qquad \wt{S}^d_b \coloneqq \bigcup_{\theta \in \Theta} \ell_{11, \theta}. \nonumber
\end{eqnarray}
Remember, we chose $\lambda = 1/100$ and $\gamma = 1/500$ (see (\ref{Construction: choices that we made in our lives})). Now  using the definition of $\varepsilon$-neighborhood of a set (\ref{epsilon neighborhood}), we define the following scalar fields:
\addtocounter{equation}{1}
\begin{align}
\begin{rcases}
\ol{\xi}_1^u \coloneqq \varphi_{\gamma} * \bs{1}_{S^{u, \lambda}} \; ,  \quad
& \ol{\xi}_1^d \coloneqq -\varphi_{\gamma} * \bs{1}_{S^{d, \lambda}} \; ,  \quad \nonumber \\
\ol{\xi}_2^u \coloneqq \varphi_{\gamma/2} * \bs{1}_{S^{u, \lambda/2}} \; , \quad  & \ol{\xi}_2^d \coloneqq -\varphi_{\gamma/2} * \bs{1}_{S^{d, \lambda/2}} \; ,   \nonumber \\
\ol{\xi}_b^u \coloneqq \varphi_{\gamma} * \bs{1}_{\wt{S}^{u, \lambda}_b} \; ,  \quad & \ol{\xi}_b^d \coloneqq -\varphi_{\gamma} * \bs{1}_{\wt{S}^{d, \lambda}_b} \; ,
\end{rcases}
\tag{\theequation a-f}
\label{Construction: a few temp fields}
\end{align}
which belong to $C^\infty(\mathbb{R}^3)$ as a result of the following lemma.
\begin{lemma}
\label{Construction: smth moll indi opn set}
Let $f : \mathbb{R}^3 \to \mathbb{R}$ be a locally integrable function and let $\psi \in C^\infty_{c}(\mathbb{R}^3)$ be a radially symmetric mollifier such that the support of $\psi$ lies in $|\bs{x}| \leq \epsilon$. Then the function given by $g = \psi * f$
belongs to $C^\infty(\mathbb{R}^3)$.
\end{lemma}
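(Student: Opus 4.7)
The proof is a classical textbook fact about mollification; the plan is simply to implement differentiation under the integral sign and justify it by compactness of $\supp \psi$ together with local integrability of $f$.

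First I would write the convolution pointwise as
\begin{equation}
g(\bs{x}) = \int_{\mathbb{R}^3} \psi(\bs{x} - \bs{y}) f(\bs{y}) \, {\rm d}\bs{y} = \int_{B(\bs{x}, \epsilon)} \psi(\bs{x} - \bs{y}) f(\bs{y}) \, {\rm d}\bs{y},
\end{equation}
where the second equality uses $\supp \psi \subseteq \{|\bs{x}| \leq \epsilon\}$. Since $f \in L^1_{\mathrm{loc}}(\mathbb{R}^3)$ and $\psi \in L^\infty(\mathbb{R}^3)$, the integral is absolutely convergent for every $\bs{x}$, so $g$ is well-defined and finite everywhere.

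Next I would show $g \in C^\infty(\mathbb{R}^3)$ by induction on the order of derivatives. Fix an arbitrary multi-index $\alpha$ and an arbitrary compact set $K \subset \mathbb{R}^3$. For $\bs{x} \in K$, the integrand $\psi(\bs{x} - \bs{y}) f(\bs{y})$ vanishes unless $\bs{y} \in K^\epsilon$, a fixed compact set independent of $\bs{x} \in K$. Because $\psi \in C^\infty_c(\mathbb{R}^3)$, every partial derivative $\partial^\alpha \psi$ is bounded with compact support, and consequently
\begin{equation}
\bigl| \partial^\alpha_{\bs{x}} \psi(\bs{x} - \bs{y}) f(\bs{y}) \bigr| \leq \|\partial^\alpha \psi\|_{L^\infty} \, \bs{1}_{K^\epsilon}(\bs{y}) \, |f(\bs{y})|,
\end{equation}
and the right-hand side is integrable on $\mathbb{R}^3$ since $f \in L^1_{\mathrm{loc}}$. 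This dominating bound, uniform in $\bs{x} \in K$, legitimizes the standard dominated-convergence argument for differentiation under the integral sign, yielding
\begin{equation}
\partial^\alpha g (\bs{x}) = \int_{\mathbb{R}^3} (\partial^\alpha_{\bs{x}} \psi)(\bs{x} - \bs{y}) f(\bs{y}) \, {\rm d}\bs{y} = \bigl( (\partial^\alpha \psi) * f \bigr)(\bs{x}).
\end{equation}

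Finally, continuity of $\partial^\alpha g$ on $K$ follows from the same dominating function by another application of dominated convergence: if $\bs{x}_n \to \bs{x}$ in $K$, then $(\partial^\alpha_{\bs{x}} \psi)(\bs{x}_n - \bs{y}) \to (\partial^\alpha_{\bs{x}} \psi)(\bs{x} - \bs{y})$ pointwise and is dominated by the integrable function above, so $\partial^\alpha g(\bs{x}_n) \to \partial^\alpha g(\bs{x})$. Since $\alpha$ and $K$ were arbitrary, $g \in C^\infty(\mathbb{R}^3)$. No real obstacle arises here; the only point one must be careful about is pinning down a dominating function that is uniform in $\bs{x}$ over compact sets, which is handled by replacing $\bs{y}$-support by $K^\epsilon$ using the compact support of $\psi$.
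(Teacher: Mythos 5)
Your proof is correct and takes essentially the same approach as the paper, which merely states that the result follows "by differentiating under the integral sign." You have simply supplied the standard justification (uniform domination on compact sets via $\supp\psi$, then dominated convergence), which the paper leaves implicit.
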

\begin{proof}
By differentiating under the integral sign in the expression of $g$, one can finish the proof.
\end{proof}
From the definitions (\ref{Construction: a few temp fields}), we notice
\addtocounter{equation}{1}
\begin{align}
\begin{rcases}
\supp \ol{\xi}_1^{\, u} \cup \supp \ol{\xi}_2^{\, u} \subseteq S^{u, \, \lambda + \gamma}, \qquad & \supp \ol{\xi}_1^{\, d} \cup \supp \ol{\xi}_2^{\, d} \subseteq S^{d, \, \lambda + \gamma},  \nonumber \\ 
\supp \ol{\xi}^u_{b} \subseteq \wt{S}^{u, \lambda + \gamma}_{b}, \qquad & \supp \ol{\xi}^d_{b} \subseteq \wt{S}^{d, \lambda + \gamma}_{b}.
\end{rcases} \nonumber
\tag{\theequation a-d}
\label{Construction: support of temp field}
\end{align}
Moreover,
\addtocounter{equation}{1}
\begin{align}
\begin{rcases}
\ol{\xi}_1^{\, u}(\bs{x}) = \ol{\xi}_2^{\, u}(\bs{x}) = 1 \quad \text{when} \quad \bs{x} \in S^{u, \, \frac{\lambda - \gamma}{2}}, \qquad & \ol{\xi}_1^{\, d}(\bs{x}) = \ol{\xi}_2^{\, d}(\bs{x}) = -1 \quad \text{when} \quad \bs{x} \in S^{d, \, \frac{\lambda - \gamma}{2}}  \nonumber \\
\ol{\xi}_b^u(\bs{x}) = 1 \quad \text{when} \quad \bs{x} \in \wt{S}^{u, \, \lambda - \gamma}_b, \qquad &\ol{\xi}_b^d(\bs{x}) = -1 \quad \text{when} \quad \bs{x} \in \wt{S}^{d, \, \lambda - \gamma}_b. \qquad \qquad
\end{rcases}
\tag{\theequation a-d}
\label{Construction: region const temp field}
\end{align}
Next, let's define two scalar fields for the construction in the reducer region
\begin{subequations}
\begin{eqnarray}
\ol{\xi}_{red}^{\, u}(\bs{x}) \coloneqq \ol{\xi}_1^{\, u}(\bs{x}) \, (1 - \eta_{\gamma}(x-1/8)) + \ol{\xi}_2^{\, u}(\bs{x}) \, \eta_{\gamma}(x-1/8) \quad \text{for} \quad \bs{x} \in \mathbb{R}^3, \nonumber \\
\ol{\xi}_{red}^{\, d}(\bs{x}) \coloneqq \ol{\xi}_1^{\, d}(\bs{x}) \, (1 - \eta_{\gamma}(x-1/8)) + \ol{\xi}_2^{\, d}(\bs{x}) \, \eta_{\gamma}(x-1/8) \quad \text{for} \quad \bs{x} \in \mathbb{R}^3. \nonumber
\end{eqnarray}
\end{subequations}
We can use them to define
\begin{subequations}
\begin{eqnarray}
\ol{\xi}^{\, u} \coloneqq \ol{\xi}_1^{\, u} \bs{1}_{\{|x|, |y| \leq 1/8\}} + \ol{\xi}_2^{\, u} \bs{1}_{\{|x|, |y| \geq 1/8+\gamma\}} + \sum_{\theta \in \Theta} \rho_\theta \; \ol{\xi}_{red}^{\, u} \bs{1}_{\{1/8 < x < 1/8+\gamma\}}, \nonumber \\
\ol{\xi}^{\, d} \coloneqq \ol{\xi}_1^{\, d} \bs{1}_{\{|x|, |y| \leq 1/8\}} + \ol{\xi}_2^{\, d} \bs{1}_{\{|x|, |y| \geq 1/8+\gamma\}} + \sum_{\theta \in \Theta} \rho_\theta \; \ol{\xi}_{red}^{\, d} \bs{1}_{\{1/8 < x < 1/8+\gamma\}}. \nonumber
\end{eqnarray}
\end{subequations}
It can be easily verified that $\ol{\xi}^{\, u}$ is infinitely differentiable, that its support lies in $S^{u, \, \lambda + \gamma}$ and that its value is $1$ when $\bs{x} \in  S^{u, \, \frac{\lambda - \gamma}{2}}$. Similarly, $\ol{\xi}^{\, d}$ is infinitely differentiable has a support that lies in $S^{d, \, \lambda + \gamma}$ and its value is $-1$ when $\bs{x} \in  S^{d, \, \frac{\lambda - \gamma}{2}}$. We finally define the parent copies for the scalar field as
\addtocounter{equation}{1}
\begin{align}
\ol{\xi} \coloneqq \ol{\xi}^{\, u} + \ol{\xi}^{\, d}, \qquad \ol{\xi}_b \coloneqq \ol{\xi}^{\, u}_b + \ol{\xi}^{\, d}_b.
\tag{\theequation a-b}
\label{Hard work xi}
\end{align}
In summary, $\ol{\xi}, \ol{\xi}_b \in L^\infty(\mathbb{R}^3) \cap C^\infty(\mathbb{R}^3)$, $\supp \ol{\xi} \subseteq S^{u, \, \lambda + \gamma} \cup S^{d, \, \lambda + \gamma}$ and $\supp \ol{\xi}_b \subseteq \wt{S}_b^{u, \, \lambda + \gamma} \cup \wt{S}_b^{d, \, \lambda + \gamma}$

Similar to the case of velocity field, an important outcome of our construction is that scalar fields 
\begin{subequations}
\begin{eqnarray}
&& \widetilde{\xi}(\bs{x}) \coloneqq \overline{\xi}(\bs{x}) \, \bs{1}_{z<1/4} + \sum_{\theta \in \Theta} T^{\bs{\tau}_\theta}(\overline{\xi}(2 \bs{x})) \, \bs{1}_{z \geq 1/4},  \\
&& \widetilde{\xi}_b(\bs{x}) \coloneqq \overline{\xi}(\bs{x}) \, \bs{1}_{z<1/4} + \sum_{\theta \in \Theta} T^{\bs{\tau}_\theta}(\overline{\xi}_b(2 \bs{x})) \, \bs{1}_{z \geq 1/4}, \\
&& \widetilde{\xi}_r(\bs{x}) \coloneqq \overline{\xi}(\bs{x}) \, \bs{1}_{z \geq 0} + \overline{\xi}(x, y, -z) \, \bs{1}_{z < 0}, 
\end{eqnarray}
\label{imp. outcome scalar}
\end{subequations}
all belong to $C^\infty(\mathbb{R}^3)$. 

Let's now gather some of the important properties of the parent constructs of the velocity field and the scalar field in the following proposition.
\begin{proposition}
\label{Structure of the near optimal flow: properties of parent network}
In the definitions (\ref{Construction: vel v}, \ref{Construction: a few imp. vel. fields}) and (\ref{Hard work xi}) the two velocity fields $\overline{\bs{u}}, \overline{\bs{u}}_b \in C^\infty(\mathbb{R}^3, \mathbb{R}^3) \cap L^\infty(\mathbb{R}^3, \mathbb{R}^3)$ and the scalar fields $\overline{\xi}, \overline{\xi}_b \in C^\infty(\mathbb{R}^3) \cap L^\infty(\mathbb{R}^3)$ are such that the following statements are true.
\\
(i) $\nabla \cdot \overline{\bs{u}} \equiv 0$ and $\nabla \cdot \overline{\bs{u}}_b \equiv 0$, \\
(ii) $\supp \overline{\bs{u}} \cup \supp \overline{\xi} \subseteq (-1/3, 1/3) \times (-1/3, 1/3) \times \mathbb{R}$, \\
(iii) $\supp \overline{\bs{u}}_b \cup \supp \overline{\xi}_{b} \subseteq (-1/3, 1/3) \times (-1/3, 1/3) \times (-\infty, 1/4)$, \\
(iv) $\overline{\bs{u}} \cdot \nabla \overline{\xi} \equiv 0$, while $\supp_z (\overline{\bs{u}}_b \cdot \nabla \overline{\xi}_{b}) \Subset (1/32, 5/64),$ \\
(v) $\int_{\mathbb{R}^2} \int_{z=0}^{1/4} \ol{u}_z \ol{\xi} \, {\rm d} \bs{x} \geq c_3 > 0$ and $\int_{\mathbb{R}^2} \int_{z=0}^{1/8} \ol{u}_{b, z} \ol{\xi}_b \, {\rm d} \bs{x} \geq 0.$\\
Here, $c_3$ is a positive constant independent of any parameter. Furthermore, the velocity fields $\widetilde{\bs{u}}, \widetilde{\bs{u}}_b, \widetilde{\bs{u}}_r$ as defined in (\ref{imp. outcome velocity}) and the scalar fields $\widetilde{\xi}, \widetilde{\xi}_b, \widetilde{\xi}_r$ defined in (\ref{imp. outcome scalar}), respectively belong to $ C^\infty(\mathbb{R}^3, \mathbb{R}^3)$ and $ C^\infty(\mathbb{R}^3)$.
\end{proposition}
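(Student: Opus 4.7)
The plan is to verify items (i)--(v) and the smoothness of the tilde--fields in turn, in each case checking that the numerical choices $\gamma = 1/500$, $\lambda = 1/100$, $\delta = 1/20$ from (\ref{Construction: choices that we made in our lives}) are tight enough; most of the work is simply recording what was already engineered into the building blocks.

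For (i), away from the reducer annuli $\{1/8 < |x| < 1/8 + \gamma\}$ and their rotated copies, $\overline{\bs{u}}$ reduces to either $\varphi_\gamma * (\nu^u + \nu^d)$ or $\varphi_{\gamma/2} * (\nu^u + \nu^d)$, each of which is divergence--free by Corollary \ref{Construction of the flow: junction corollary} once one decomposes $\nu^d$ and $\nu_b$ into Kirchhoff junctions via Lemma \ref{Construction of the flow: measures on rays}, in direct analogy with the table already written for $\nu^u$; the flux balances at every branching point are enforced by the choice of vector weights in Table \ref{Table: ell nu}. Inside each reducer annulus, divergence--freeness together with smooth matching to the outside fields is precisely Lemma \ref{Construction of the flow: reducer lemma}, so the patched field $\overline{\bs{u}}$ is globally $C^\infty$ and solenoidal, and likewise for $\overline{\bs{u}}_b$. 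Items (ii) and (iii) are support--tracking: every line segment or ray appearing in $S, S_b, \wt{S}_b^u, \wt{S}_b^d$ has horizontal coordinates of modulus at most $1/4 + \delta/2 = 1/4 + 1/40$, and the $\gamma$-- or $(\lambda + \gamma)$--neighborhoods enlarge this by at most $\lambda + \gamma = 3/250$, comfortably less than $1/3 - 1/4 - 1/40$; the vertical bound in (iii) is read off from $\supp_z\, \ell_0, \ell_{8,\theta}, \ell_{9,\theta}, \ell_{10}, \ell_{11,\theta} \subseteq (-\infty, 1/4)$.

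The core content is item (iv). For $\overline{\bs{u}} \cdot \nabla \overline{\xi} \equiv 0$ the essential inequality is $\gamma \leq (\lambda - \gamma)/2$, i.e.\ $3\gamma \leq \lambda$, which our numerical choices satisfy ($3/500 < 5/500$). This gives $\supp \overline{\bs{u}}^{u} \subseteq S^{u, \gamma} \subseteq S^{u, (\lambda - \gamma)/2}$, so by (\ref{Construction: region const temp field}) the field $\overline{\xi}^{u}$ equals the constant $+1$ throughout $\supp \overline{\bs{u}}^u$, forcing $\nabla \overline{\xi}^u$ to vanish there, and analogously for the downward half; in each reducer annulus $\overline{\xi}_{red}^{u,d}$ is a convex combination of two fields each equal to $\pm 1$ on the inner tube, and so remains constantly $\pm 1$ on that tube, killing its gradient against $\overline{\bs{u}}$. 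For $\overline{\bs{u}}_b \cdot \nabla \overline{\xi}_b$, the scalar subfields $\overline{\xi}_b^u, \overline{\xi}_b^d$ are supported in thin tubes around the upward rays $\ell_{10}$ and $\ell_{11, \theta}$ emanating from height $h = 1/16$; within $\supp \overline{\bs{u}}_b$, the gradient $\nabla \overline{\xi}_b$ is nonzero only near the starting ends of those tubes, and a direct calculation using the explicit values of $h, \gamma, \lambda$ localises the resulting $z$--support inside $(1/32, 5/64)$. For (v), the integrand $\overline{u}_z \overline{\xi}$ is pointwise nonnegative by design (positive on $\bs{P}_{up}$, negative on $\bs{P}_{down}$), and a single straight vertical segment of the skeleton already yields a positive contribution of universal size $c_3 > 0$ via an explicit mollifier integral; the second integral in (v) is nonnegative by the same sign--matching observation.

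For the smoothness of $\widetilde{\bs{u}}, \widetilde{\bs{u}}_b, \widetilde{\bs{u}}_r$ and the corresponding scalar fields, the only question is what happens across the matching surfaces $z = 1/4$ (or $z = 0$ for the $r$--versions). The main obstacle to address is checking that on the open strip $7/32 < z < 9/32$ the two expressions $\overline{\bs{u}}(\bs{x})$ and $\sum_{\theta \in \Theta} T^{\bs{\tau}_\theta} \overline{\bs{u}}(2\bs{x})$ agree identically: in that strip only the vertical rays $\ell_{2,\theta}$ and the vertical downward pipes $\ell_{7,\theta}^{\, i,j}$ contribute to $\overline{\bs{u}}$, and one verifies that these are reproduced exactly, centerline by centerline and flux weight by flux weight, by the top ends of the four half--scale translated copies at the next level. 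Hence the two smooth functions coincide on an open neighborhood of $\{z = 1/4\}$ and their $C^\infty$ join is seamless; the same argument passes verbatim to $\overline{\xi}$ and to $\overline{\bs{u}}_b, \overline{\xi}_b$. For smoothness of $\widetilde{\bs{u}}_r, \widetilde{\xi}_r$ across $z = 0$, the reflection $(u_x, u_y, u_z)(x, y, z) \mapsto (-u_x, -u_y, u_z)(x, y, -z)$ leaves the support sets and vector weights of the Kirchhoff measures in a neighborhood of $\{z = 0\}$ invariant, because only the vertical center pipe $\ell_0$ and the outer vertical pipes $\ell_{7,\theta}^{\, i,j}$ intersect that layer and all are preserved by this reflection, so the two pieces coincide on an open slab across $z = 0$ and the total is $C^\infty$.
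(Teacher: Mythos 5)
Your overall structure tracks the paper closely—items (i), (ii), (iii) are indeed bookkeeping that the paper regards as already established, and your treatment of (iv) (the inequality $\gamma \leq (\lambda-\gamma)/2$ forcing $\overline{\xi}\equiv\pm 1$ on the velocity support, and the reducer remaining constant because it is a convex combination of two fields that are both constantly $\pm 1$ there) matches the paper's argument. Your sketch of the $C^\infty$ matching across $z=1/4$ via agreement on the open strip $7/32<z<9/32$ is likewise the same device the paper uses. A small factual slip: near $z=0$ the velocity measures that intersect the slab are supported on $\ell_0$ and the rays $\ell_{3,\theta}$ (which start at $z=1/8-\delta$ and descend), not on $\ell_{7,\theta}^{i,j}$ (those start at $z=1/8-i\delta\in\{0.075,\,0.175\}$ and go \emph{upward}, so they never reach $z=0$). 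This does not break your conclusion, because $\ell_{3,\theta}$ is still a vertical ray and is invariant under the prescribed reflection, but the identification of the relevant pipes should be corrected.

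The genuine gap is in item (v). Your argument rests on the claim that $\overline{u}_z\overline{\xi}$ is pointwise nonnegative, but this is \emph{false} inside the reducer. There the axisymmetric streamfunction construction gives $\overline{u}_{z,c}=(z-z_0)\,\eta_\gamma'(\cdot)\,\varrho^{-2}\bigl(\Psi_s(\varrho)-\Psi_e(\varrho)\bigr)$ with $\Psi_s-\Psi_e\leq 0$ and $\eta_\gamma'\geq 0$, so $\overline{u}_{z,c}$ has the opposite sign to $z-z_0$; in particular it is strictly negative on the upper half of the reducer tube in $\bs{P}_{up}$ where $\overline{\xi}\equiv +1$. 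Once pointwise nonnegativity is lost, the observation that one vertical segment gives a universal positive contribution no longer yields a lower bound: you would need to control the sign-indefinite reducer contribution. (In fact the reducer integrates to zero by the odd symmetry in $z-z_0$, but your proposal does not invoke this.) The paper sidesteps the issue entirely: it writes $\int \overline{u}_z\overline{\xi}=\int_{\supp\overline{\bs{u}}^u}\overline{u}_z^u-\int_{\supp\overline{\bs{u}}^d}\overline{u}_z^d$, then uses incompressibility and compact horizontal support to replace each slice integral by the flux through $z=0$ (where the velocity is purely vertical and purely from mollified vertical rays), yielding $\tfrac14\int_{\mathbb R^2}\overline{u}_z^u(\cdot,0)-\tfrac14\int_{\mathbb R^2}\overline{u}_z^d(\cdot,0)=c_3>0$. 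Your sign-matching argument does work for the \emph{second} inequality in (v), because below $z=h+\lambda+\gamma$ the field $\overline{\bs{u}}_b$ is purely vertical (there is no reducer in $\nu_b$) and the sign alignment with $\overline{\xi}_b$ is genuine; but for the first inequality you need the flux identity or the reducer's antisymmetry, neither of which appears in your writeup.
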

\begin{proof}[Proof of Proposition \ref{Structure of the near optimal flow: properties of parent network}] 
We already proved all the points except $(iv)$ and $(v)$, which we prove now.

We first focus on point $(iv)$. We note that: (1) $\supp{\ol{\bs{u}}} = \supp{\ol{\bs{u}}^u} \cup \supp{\ol{\bs{u}}^d}$, (2) $\supp{\ol{\bs{u}}^u} \subseteq S^{u, \gamma} \subset S^{u, \frac{\lambda - \gamma}{2}}$, (3) $\supp{\ol{\bs{u}}^d} \subseteq S^{d, \gamma} \subset S^{d, \frac{\lambda - \gamma}{2}}$. Furthermore, because of our choices of $\delta$, $\gamma$ and $\lambda$, we see that $S^{u, \frac{\lambda - \gamma}{2}} \cap S^{d, \frac{\lambda - \gamma}{2}} = \emptyset$. Now, if $\bs{x} \notin S^{u, \frac{\lambda - \gamma}{2}} \cup S^{d, \frac{\lambda - \gamma}{2}}$, it clear that $(\ol{\bs{u}} \cdot \nabla \ol{\xi})(\bs{x}) = 0$, as for this case $\bs{x} \notin \supp{\ol{\bs{u}}}$. In the next case, when $\bs{x} \in S^{u, \frac{\lambda - \gamma}{2}}$, we have $\ol{\xi} \equiv 1$ which implies $(\ol{\bs{u}} \cdot \nabla \ol{\xi})(\bs{x}) = 0$. In a similar manner, one can show $(\ol{\bs{u}} \cdot \nabla \ol{\xi})(\bs{x}) = 0$ when $\bs{x} \in S^{d, \frac{\lambda - \gamma}{2}}$.

%From XX, we know $\supp{\ol{\bs{u}}} = \supp{\ol{\bs{u}}^u} \cup \supp{\ol{\bs{u}}^d}$ and that $\supp{\ol{\bs{u}}^u} \cap \supp{\ol{\bs{u}}^d} = \emptyset$. Now it clear that $(\ol{\bs{u}} \cdot \nabla \ol{\xi})(\bs{x}) = 0$ when $\bs{x} \notin \supp{\ol{\bs{u}}}$. In the next case, when $\bs{x} \in \supp{\ol{\bs{u}}^u}$, we first note that $\supp{\ol{\bs{u}}^u} \subseteq S^{u, \gamma} \subset S^{u, \frac{\lambda - \gamma}{2}}$. Therefore, $\ol{\xi} \equiv 1$ in this region which implies $(\ol{\bs{u}} \cdot \nabla \ol{\xi})(\bs{x}) = 0$. In a similar manner, one can show $(\ol{\bs{u}} \cdot \nabla \ol{\xi})(\bs{x}) = 0$ when $\bs{x} \in \supp{\ol{\bs{u}}^d}$.

Next, we show that $\supp_z \ol{\bs{u}}_b \cdot \nabla \ol{\xi}_b \Subset (1/32, 5/64)$. We first note that
\begin{eqnarray}
\{z < h\} \cap \supp{\ol{\bs{u}}}_b \subset \wt{S}^{u, \lambda - \gamma}_b \cup \wt{S}^{d, \lambda - \gamma}_b, \nonumber
\end{eqnarray}
and that $\wt{S}^{u, \lambda - \gamma}_b \cap \wt{S}^{d, \lambda - \gamma}_b = \emptyset$. For $z < h$, we proceed as in the last paragraph to show $(\ol{\bs{u}}_b \cdot \nabla \ol{\xi}_b)(\bs{x}) = 0.$ When $z > h + \lambda + \gamma$, from the definition of $\wt{S}^u_b$ and $\wt{S}^d_u$, we see that $\ol{\xi}_b \equiv 0$, therefore, $(\ol{\bs{u}}_b \cdot \nabla \ol{\xi}_b)(\bs{x}) = 0$ in this region as well. To summarize, $\supp_z \ol{\bs{u}}_b \cdot \nabla \ol{\xi}_b \subseteq [h, h+\gamma+\lambda] \Subset (1/32, 5/64).$

Now we move to point $(v)$ of Proposition \ref{Structure of the near optimal flow: properties of parent network}. From a straightforward calculation, we see that
\begin{eqnarray}
\int_{\mathbb{R}^3 \cap \{0 < z < 1/4\}} \ol{u}_z \ol{\xi} \, \text{d}  \bs{x} && = \int_{(\supp{\ol{\bs{u}}^u} \cup \supp{\ol{\bs{u}}^d}) \cap \{0 < z < 1/4\}} \ol{u}_z \ol{\xi} \, \text{d}  \bs{x} \nonumber \\
&& = \int_{\supp{\ol{\bs{u}}^u}  \cap \{0 < z < 1/4\}} \ol{u}_z \ol{\xi} \, \text{d} \bs{x} + \int_{\supp{\ol{\bs{u}}^d}  \cap \{0 < z < 1/4\}} \ol{u}_z \ol{\xi} \, \text{d}  \bs{x} \nonumber \\
&& = \int_{\supp{\ol{\bs{u}}^u}  \cap \{0 < z < 1/4\}} \ol{u}^u_z \ol{\xi} \, \text{d} \bs{x} + \int_{\supp{\ol{\bs{u}}^d}  \cap \{0 < z < 1/4\}} \ol{u}^d_z \ol{\xi} \, \text{d}  \bs{x} \nonumber \\
&& = \int_{\supp{\ol{\bs{u}}^u}  \cap \{0 < z < 1/4\}} \ol{u}^u_z \, \text{d}  \bs{x} - \int_{\supp{\ol{\bs{u}}^d}  \cap \{0 < z < 1/4\}} \ol{u}^d_z \, \text{d}  \bs{x} \nonumber \\
&& = \int_{\mathbb{R}^3 \cap \{0 < z < 1/4\}} \ol{u}^u_z \, \text{d}  \bs{x} - \int_{\mathbb{R}^3 \cap \{0 < z < 1/4\}} \ol{u}^d_z \, \text{d}  \bs{x} \nonumber \\
&& = \frac{1}{4}\int_{\mathbb{R}^2} \ol{u}^u_z (\cdot ,0) \, \text{d}  x \text{d}  y - \frac{1}{4}\int_{\mathbb{R}^2} \ol{u}^d_z (\cdot ,0) \, \text{d}  x \text{d}  y \nonumber \\
&& = c_3 > 0,
\end{eqnarray}
where $c_3$ is some constant. To obtain the fourth line, we used the fact that $\xi(\bs{x}) = 1$ when $\bs{x} \in \supp{\ol{\bs{u}}^u}$ and $\xi(\bs{x}) = -1$ when $\bs{x} \in \supp{\ol{\bs{u}}^d}$. To obtain the sixth line, we used the fact that $\ol{\bs{u}}^u$ and $\ol{\bs{u}}^d$ are divergence-free and that their support is bounded in the $xy$-plane, which in turn implies that the volume flux through any horizontal section is the same, i.e.,
\begin{eqnarray}
\int_{\mathbb{R}^2} \ol{u}^u_z (\cdot ,z) \, \text{d}  x \text{d}  y = \int_{\mathbb{R}^2} \ol{u}^u_z (\cdot ,0) \, \text{d}  x \text{d}  y \quad \text{and} \quad \int_{\mathbb{R}^2} \ol{u}^d_z (\cdot ,z) \, \text{d}  x \text{d}  y = \int_{\mathbb{R}^2} \ol{u}^d_z (\cdot ,0) \, \text{d}  x \text{d}  y \quad \text{for any} \quad z. \nonumber
\end{eqnarray}
To show $$\int_{\mathbb{R}^3 \cap \{0 < z < 1/4\}} \ol{u}_{b, z} \ol{\xi}_b \, \text{d}  \bs{x}  \geq 0,$$ we simply note that $\ol{\xi}_b \equiv 0$ when $z \geq h + \lambda + \gamma$ and for $z < h + \lambda + \gamma$, the velocity $\ol{\bs{u}}_b$ is unidirectional (only the $z$-component is non-zero). Furthermore, in this region, wherever $\ol{u}_{b, z} > 0$, we have $\ol{\xi}_b \geq 0$ and wherever $\ol{u}_{b, z} < 0$, we have $\ol{\xi}_b \leq 0$.
\end{proof}

\subsection{Main copies $\ol{\bs{u}}_N$  and $\ol{\xi}_N$: Proof of Proposition \ref{Structure of the near optimal flow: flow fields between stripes}}
Let's begin with a few useful definitions. First, let
\begin{eqnarray}
z_{i} \coloneqq \frac{1}{2} - \frac{1}{2^{i+1}} \quad \text{for} \quad i \in \mathbb{Z}_{\geq 0}, \nonumber
\end{eqnarray}
mark the vertical positions of the interfaces of different layers, while the intervals
\begin{eqnarray}
Z_{i} \coloneqq [z_{i-1}, z_{i}) \quad \text{for} \quad i \in \mathbb{N}, 
\label{Strategy: Zn}
\end{eqnarray}
denote the different layers. We define the set
\begin{eqnarray}
F \coloneqq \{(1, 0), (-1, 0), (0, 1), (0, -1)\} \nonumber
\end{eqnarray}
which we use to define sets of nodal points as
\begin{eqnarray}
\mathcal{N}_i = \left\{(x, y, z_i) \left| \; x = \sum_{j = 1}^{|i|} \frac{\alpha_j}{2^{j+1}}, \; y = \sum_{j = 1}^{|i|} \frac{\beta_j}{2^{j+1}}, \; (\alpha_j, \beta_j) \in F \right\} \right. \quad \text{for} \quad i \in \mathbb{N}.
\label{Structure of the near optimal flow: def F i}
\end{eqnarray}

\begin{proof}[Proof of Proposition \ref{Structure of the near optimal flow: flow fields between stripes}] 
For a given integer $N \geq 1$, we need to construct a velocity field $\overline{\bs{u}}_N$ and a scalar field $\overline{\xi}_N$ in the domain $D$ such that they satisfy the properties specified in Proposition \ref{Structure of the near optimal flow: flow fields between stripes}. To that end, we start by creating an intermediate flow field $\overline{\bs{u}}_{int, 1} : \mathbb{R}^3 \to \mathbb{R}^3$, whose support lies in $z \geq 0$ and is defined as follows:
\begin{eqnarray}
\overline{\bs{u}}_{int, 1}(\bs{x}) \coloneqq \overline{\bs{u}}(\bs{x}) \, \bs{1}_{Z_1} + \sum_{i = 1}^{N-1} \sum_{\bs{p} \in \mathcal{N}_i} T^{\, \bs{p}}(\overline{\bs{u}}(2^i \bs{x})) \bs{1}_{Z_{i+1}} + \sum_{\bs{p} \in \mathcal{N}_N} T^{\, \bs{p}}(\overline{\bs{u}}_b(2^N \bs{x})) \, \bs{1}_{Z_{N+1}} \quad \text{for} \quad \bs{x} \in \mathbb{R}^3.
\label{Strategy: u int 1}
\end{eqnarray}
To create $\overline{\bs{u}}_N$, we glue $\overline{\bs{u}}_{int, 1}$ and its mirror reflection about $z=0$. Let
\begin{eqnarray}
\overline{\bs{u}}_{int, 2}(\bs{x}) \coloneqq \left(-\overline{u}_{x, int, 1}(x, y, -z), -\overline{u}_{y, int, 1}(x, y, -z), \overline{u}_{z, int, 1}(x, y, -z)\right) \bs{1}_{z < 0} \quad \text{for} \quad \bs{x} \in \mathbb{R}^3.
\label{Strategy: u int 2}
\end{eqnarray}
Notice that the signs of $x$ and $y$ components are flipped to maintain the divergence-free condition. We finally define $\overline{\bs{u}}_N$ as
\begin{eqnarray}
\overline{\bs{u}}_N(\bs{x}) \coloneqq \overline{\bs{u}}_{int, 1}(\bs{x}) + \overline{\bs{u}}_{int, 2}(\bs{x}).  
\label{Strategy: uN}
\end{eqnarray}
Note that $\supp \overline{\bs{u}}_N \Subset D$ and it is really the restriction of $\overline{\bs{u}}_N$ to $D$, which we continue to call $\overline{\bs{u}}_N$, that we use in the proof of Proposition \ref{Structure of the near optimal flow: flow fields between stripes}. We then define $\overline{\xi}_N$ in a similar way. First, we define an intermediate scalar field $\overline{\xi}_{int, 1}: \mathbb{R}^3 \to \mathbb{R}$ as
\begin{eqnarray}
\overline{\xi}_{int, 1}(\bs{x}) \coloneqq \overline{\xi}(\bs{x}) \, \bs{1}_{Z_1} + \sum_{i = 1}^{N-1} \sum_{\bs{p} \in \mathcal{N}_i} T^{\, \bs{p}}(\overline{\xi}(2^i \bs{x})) \bs{1}_{Z_{i+1}} + \sum_{\bs{p} \in \mathcal{N}_N} T^{\, \bs{p}}(\overline{\xi}_b(2^N \bs{x})) \, \bs{1}_{Z_{N+1}} \quad \text{for} \quad \bs{x} \in \mathbb{R}^3, \nonumber
\end{eqnarray}
and its reflection about $z = 0$ as
\begin{eqnarray}
\overline{\xi}_{int, 2}(\bs{x}) = \overline{\xi}_{int, 1}(x, y, -z) \bs{1}_{z < 0} \quad \text{for} \quad \bs{x} \in \mathbb{R}^3, \nonumber
\end{eqnarray}
using which we define
\begin{eqnarray}
\overline{\xi}_{N}(\bs{x}) \coloneqq \overline{\xi}_{int, 1}(\bs{x}) + \overline{\xi}_{int, 2}(\bs{x}) \quad \text{for} \quad \bs{x} \in \mathbb{R}^3.
\end{eqnarray}
As before, $\supp \overline{\xi}_{N} \Subset D$ and  it is the restriction of $\overline{\xi}_{N}$ to $D$, which we continue to denote as $\overline{\xi}_{N}$, that we use in Proposition \ref{Structure of the near optimal flow: flow fields between stripes}. We claim that the velocity field $\overline{\bs{u}}_N$ and the scalar field $\overline{\xi}_N$ defined here satisfy all the requirements stated in Proposition \ref{Structure of the near optimal flow: flow fields between stripes}.

We first show that $\supp \overline{\bs{u}}_N \Subset (-1/2, 1/2) \times (-1/2, 1/2) \times (-z_{N+2}, z_{N+2}) \Subset D$. It is clear from the definition of $\overline{\bs{u}}_N$ given in (\ref{Strategy: uN}) along with (\ref{Strategy: u int 1}), (\ref{Strategy: u int 2}) and the definition of $Z_i$ in (\ref{Strategy: Zn}) that if $\wh{\bs{x}} \in \supp \overline{\bs{u}}_N$ then 
\begin{eqnarray}
\wh{z} \in [-z_{N+1}, z_{N+1}] \subset (-z_{N+2}, z_{N+2}).
\label{Strat: prop 1 inf 1}
\end{eqnarray}
Next from the statement (ii) in Proposition \ref{Structure of the near optimal flow: properties of parent network}, we note that if $\wh{\bs{x}} \in \supp \overline{\bs{u}}(2^i \bs{x})$ then
$$\wh{x}, \wh{y} \in \left(-\frac{1}{3 \cdot 2^i}, \, \frac{1}{3 \cdot 2^i}\right).$$ Also, note that if $\bs{p} \in \mathcal{N}_i$ for $i \in \mathbb{N}$, then $$|p_x|, |p_y| \leq \frac{1}{2} - \frac{1}{2^{i-1}},$$ Combining these two pieces of information tells us that if $\wh{\bs{x}} \in \supp \, T^{\, \bs{p}} \overline{\bs{u}}(2^i \bs{x})$ then
\begin{eqnarray}
\wh{x}, \wh{y} \in \left(-\frac{1}{2} + \frac{1}{3 \cdot 2^i}, \, \frac{1}{2} - \frac{1}{3 \cdot 2^i}\right) \subset \left(-\frac{1}{2}, \, \frac{1}{2} \right).
\label{Strat: prop 1 inf 2}
\end{eqnarray}
Finally, combining (\ref{Strat: prop 1 inf 1}) and (\ref{Strat: prop 1 inf 2}) with the definition (\ref{Strategy: uN}) gives 
\begin{eqnarray}
\supp \overline{\bs{u}}_N \Subset (-1/2, 1/2) \times (-1/2, 1/2) \times (-z_{N+2}, z_{N+2}) \Subset D.
\label{Strat: fin supp uN}
\end{eqnarray}

We now show that $\overline{\bs{u}}_N$ is infinitely differentiable, which together with (\ref{Strat: fin supp uN}) will imply $\overline{\bs{u}}_N \in C^\infty_{c}(D; \mathbb{R}^3)$. Let's first define two sets
\begin{eqnarray}
&& \Lambda \coloneqq \left(\bigcup_{0 \leq i \leq N-1} (z_i, z_{i+1})\right) \cup \left(\bigcup_{0 \leq i \leq N-1} (-z_{i+1}, -z_i) \right) \cup (z_N, 1/2) \cup (-1/2, -z_N), \nonumber \\
&& \Gamma \coloneqq \{z_0, z_1, -z_1, \dots  z_N, -z_N\}. \nonumber
\end{eqnarray}

It is easy to see from (\ref{Strategy: u int 1}), (\ref{Strategy: u int 2}), (\ref{Strategy: uN}) and from the infinite differentiability of $\ol{\bs{u}}$ and $\ol{\bs{u}}_b$ in Proposition \ref{Structure of the near optimal flow: properties of parent network} that $\overline{\bs{u}}_N(\bs{x})$ is infinitely differentiable when $z \in \Lambda$. Therefore, the only thing we still need to show  is that $\overline{\bs{u}}_N(\bs{x})$ is infinitely differentiable when $z \in \Gamma$, i.e., at the interfaces.

The fact $\widetilde{\bs{u}}_r$ in Proposition \ref{Structure of the near optimal flow: properties of parent network} belongs to $C^\infty(\mathbb{R}^3, \mathbb{R}^3)$ and $\overline{\bs{u}}_N$ coincides with $\widetilde{\bs{u}}_r$ when $z \in (-z_1, z_1)$, implies $\overline{\bs{u}}_N(\bs{x})$ is infinite differentiable when $z = z_0$. Now if $\bs{x}$ is such that $z \in (z_0, z_2)$ then $\overline{\bs{u}}_N(\bs{x})$ coincides with $\widetilde{\bs{u}}_b(\bs{x})$ when $N =1$ or it coincides with $\widetilde{\bs{u}}(\bs{x})$ when $N > 1$, which then concludes the infinite differentiability of $\overline{\bs{u}}_N$ at $z = z_1$. A similar argument can be applied to conclude the infinite differentiability at $z = -z_1$. In the last case, when $N > 1$ and $i \in \{2, \dots N-1\}$, then one can show
\begin{eqnarray}
\overline{\bs{u}}_N(\bs{x}) && = \sum_{\bs{p} \in \mathcal{N}_{i-1}} T^{\, \bs{p}}(\overline{\bs{u}}(2^{i-1} \bs{x})) \bs{1}_{Z_{i}} + \sum_{\bs{p} \in \mathcal{N}_i} T^{\, \bs{p}}(\overline{\bs{u}}(2^i \bs{x})) \bs{1}_{Z_{i+1}}  \nonumber \\ 
&& = \sum_{\bs{p} \in \mathcal{N}_{i-1}} T^{\, \bs{p}}\left( \left(\ol{\bs{u}} + \sum_{\bs{p}^\prime \in \mathcal{N}_1} T^{\, \bs{p}^\prime} \ol{\bs{u}}\right)(2^{i-1}\bs{x}) \right) \nonumber \\  && =  \sum_{\bs{p} \in \mathcal{N}_{i-1}} T^{\, \bs{p}}(\widetilde{\bs{u}}(2^{i-1} \bs{x})) \quad \text{when} \quad \bs{x} \in (z_{i-1}, z_{i+1}), \nonumber
\end{eqnarray}
or when $i = N$, then
\begin{eqnarray}
\overline{\bs{u}}_N(\bs{x}) =  \sum_{\bs{p} \in \mathcal{N}_{i-1}} T^{\, \bs{p}}(\widetilde{\bs{u}}_b(2^{i-1} \bs{x})) \quad \text{when} \quad \bs{x} \in (z_{i-1}, z_{i+1}), \nonumber
\end{eqnarray}
which then establishes that $\overline{\bs{u}}_N(\bs{x})$ is infinitely differentiable when $z = z_i$ for $2 \leq i \leq N$. A similar argument applies when  $z = -z_i$ for $2 \leq i \leq N$, which finishes the proof of $\overline{\bs{u}}_N \in C^\infty_{c}(D; \mathbb{R}^3)$. We note similar arguments will also work to show $\overline{\xi}_N \in C^\infty_{c}(D)$.

It is now fairly easy prove (i) in Proposition \ref{Structure of the near optimal flow: flow fields between stripes}. It is trivial to see that $\nabla \cdot \overline{\bs{u}}_N = 0$ when $z \in \Lambda$. As $\overline{\bs{u}}_N \in C^\infty_{c}(D; \mathbb{R}^3)$, the derivatives of $\overline{\bs{u}}_N$ are continuous in $D$, which leads us to conclude that $\nabla \cdot \overline{\bs{u}}_N = 0$ everywhere in $D$.

Next, we see that (\ref{Strat: fin supp uN}) and a similar conclusion derived for $\overline{\xi}_N$ proves (ii) in Proposition \ref{Structure of the near optimal flow: flow fields between stripes}.

To prove (iii) in Proposition \ref{Structure of the near optimal flow: flow fields between stripes}, we need the following simple lemma.

\begin{lemma}
\label{Structure of the near optimal flow: no intersection lemma}
For $i \in \mathbb{N}$, let $\bs{p}_1, \bs{p}_2 \in \mathcal{N}_i$ such that $\bs{p}_1 \neq \bs{p}_2$, then
\begin{subequations} 
\begin{eqnarray}
\left(\supp T^{\, \bs{p}_1} \overline{\bs{u}}(2^i \bs{x}) \cup \supp T^{\, \bs{p}_1} \overline{\xi}(2^i \bs{x}) \right)  \bigcap \left(\supp T^{\, \bs{p}_2} \overline{\bs{u}}(2^i \bs{x}) \cup \supp T^{\, \bs{p}_2} \overline{\xi}(2^i \bs{x}) \right) = \emptyset, \nonumber \\
\left(\supp T^{\, \bs{p}_1} \overline{\bs{u}}_b(2^i \bs{x}) \cup \supp T^{\, \bs{p}_1} \overline{\xi}_b(2^i \bs{x}) \right) \bigcap \left(\supp T^{\, \bs{p}_2} \overline{\bs{u}}_b(2^i \bs{x}) \cup \supp T^{\, \bs{p}_2} \overline{\xi}_b(2^i \bs{x}) \right) = \emptyset. \nonumber
\end{eqnarray}
\end{subequations}
\end{lemma}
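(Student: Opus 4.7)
My plan is to reduce the statement to a combinatorial lower bound on horizontal separation within $\mathcal{N}_i$ combined with a bound on the horizontal extent of the parent constructs. Observe that every $\bs{p} \in \mathcal{N}_i$ has the same $z$-coordinate $z_i$, so $T^{\bs{p}}(\overline{\bs{u}}(2^i \bs{x}))$ has support $\bs{p} + 2^{-i}\supp \overline{\bs{u}}$, and similarly for $\overline{\xi}, \overline{\bs{u}}_b, \overline{\xi}_b$. Writing $A := \supp \overline{\bs{u}} \cup \supp \overline{\xi}$ and $A_b := \supp \overline{\bs{u}}_b \cup \supp \overline{\xi}_b$, the claim amounts to $\bs{p}_1 - \bs{p}_2 \notin 2^{-i}(A - A)$ (respectively $\bs{p}_1 - \bs{p}_2 \notin 2^{-i}(A_b - A_b)$). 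Because $\bs{p}_1 - \bs{p}_2$ is purely horizontal, only the horizontal projections of $A$ and $A_b$ matter.

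First I would establish the combinatorial lower bound. Encoding each $\bs{p} \in \mathcal{N}_i$ by its path $((\alpha_j, \beta_j))_{j = 1}^i \in F^i$ and exploiting the decomposition
\begin{eqnarray}
x + y = \sum_{j = 1}^i \frac{\alpha_j + \beta_j}{2^{j+1}}, \qquad x - y = \sum_{j = 1}^i \frac{\alpha_j - \beta_j}{2^{j+1}}, \nonumber
\end{eqnarray}
where $\alpha_j + \beta_j, \alpha_j - \beta_j \in \{-1, +1\}$ (since exactly one of $\alpha_j, \beta_j$ is nonzero at each level), a short argument shows that both $x + y$ and $x - y$ range over finite sets whose distinct values differ by at least $1/2^i$. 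From $\Delta x = (\Delta(x+y) + \Delta(x-y))/2$ and $\Delta y = (\Delta(x+y) - \Delta(x-y))/2$ I then deduce the following dichotomy: either (a) $\Delta x = 0$ or $\Delta y = 0$, and the nonzero coordinate difference is at least $1/2^i$; or (b) both $|\Delta x|, |\Delta y| \geq 1/2^{i+1}$.

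Next I would carry out the geometric part. By the construction in Section \ref{construction of parent copy}, $A$ lies in the $(\lambda + \gamma)$-neighborhood of the skeleton $S = S^u \cup S^d$, which consists of vertical segments at horizontal locations of $\ell^\infty$-norm at most $1/4 + \delta/2$ together with horizontal arms along $\pm \bs{e}_x, \pm \bs{e}_y$ of axial length at most $1/4 + \delta/2$. In dichotomy (a), the separation $1/2^i$ exceeds the combined horizontal reach $2(1/4 + \delta/2 + \lambda + \gamma)/2^i$ of the two scaled supports (since $2(1/4 + \delta/2 + \lambda + \gamma) < 1$), and a direct bounding-box argument suffices. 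In dichotomy (b) no single bounding-box argument works, so I would enumerate arm-pairs: two parallel horizontal arms from $\bs{p}_1, \bs{p}_2$ lie on lines of transverse separation at least $1/2^{i+1}$, exceeding twice the cross-sectional radius $(\lambda + \gamma)/2^i$; two perpendicular horizontal arms would have to meet at a point whose $\ell^\infty$-distance to at least one of $\bs{p}_1, \bs{p}_2$ exceeds the axial arm reach $(1/4 + \delta/2 + \lambda + \gamma)/2^i$, contradicting $1/4 + \delta/2 + \lambda + \gamma < 1/2$; and the outer vertical end-pipes near the points $\bs{q}_5^{\, i, j}$ are handled by the same margin.

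The main obstacle is the arm-by-arm enumeration in dichotomy (b), which requires careful book-keeping of all the horizontal and outer-vertical tubes appearing in the parent construct. The key enabling inequality is $1/4 + \delta/2 + \lambda + \gamma < 1/2$, amply satisfied by the choices $\delta = 1/20, \lambda = 1/100, \gamma = 1/500$. The case of $\overline{\bs{u}}_b, \overline{\xi}_b$ is analogous but simpler, as the skeleton $S_b$ contains only the central vertical segment $\ell_0$ and four short connectors $\ell_{9, \theta}$ to the neighboring vertical pipes $\ell_{8, \theta}$; its horizontal footprint is strictly smaller than that of $S$, so the same inequalities conclude the argument with additional slack.
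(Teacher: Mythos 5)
Your route differs substantially from the paper's, and necessarily so. The paper's own proof asserts the separation bounds $|\bs{p}_{1,x} - \bs{p}_{2,x}| \geq 2^{-i}$ and $|\bs{p}_{1,y} - \bs{p}_{2,y}| \geq 2^{-i}$ for any two distinct $\bs{p}_1, \bs{p}_2 \in \mathcal{N}_i$, deduces $|\bs{p}_1 - \bs{p}_2|_\parallel \geq \sqrt{2}\, 2^{-i}$, and then closes with a single triangle inequality against $\supp \ol{\bs{u}} \cup \supp \ol{\xi} \subseteq (-1/3, 1/3)^2 \times \mathbb{R}$. That separation claim is false: for $i = 1$ the points $\bs{p}_1 = (1/4, 0, z_1)$ and $\bs{p}_2 = (0, 1/4, z_1)$ of $\mathcal{N}_1$ have $|\bs{p}_1 - \bs{p}_2|_\parallel = \sqrt{2}/4$, and your dichotomy gives the correct minimum, $2^{-i}/\sqrt{2}$, attained when $|\Delta x| = |\Delta y| = 2^{-i-1}$. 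With the correct minimum the paper's triangle inequality produces a negative right-hand side, so a bounding-box estimate in the $(x,y)$ frame alone cannot handle case (b); your observation that the supports are thin tubes hugging the coordinate axes, rather than full rectangular boxes, is exactly the extra information needed. In short, you have not elaborated the paper's argument but repaired it.

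Two comments on the sketch. First, in case (b) the stated ``transverse separation at least $1/2^{i+1}$'' between parallel arms neglects the offsets of the arm axes from $\bs{p}_k$, which are as large as $\delta\, 2^{-i}$; the honest worst-case gap is $(1/2 - 2\delta) 2^{-i}$, still comfortably larger than the tube diameter $2(\lambda + \gamma) 2^{-i}$, but the offsets should be tracked. Second, and more usefully, the arm-by-arm enumeration can be avoided: you have already shown that at least one of $|\Delta(x+y)|$, $|\Delta(x-y)|$ is $\geq 2^{-i}$, and inspecting Table~\ref{Table: ell nu} gives $\max_S |x+y| = \max_S |x-y| = 1/4 + 3\delta/2$ (the extreme being the point $\bs{q}_3^{1,1}$), so that $\max_{S^{\lambda+\gamma}} |x \pm y| \leq 1/4 + 3\delta/2 + \sqrt{2}(\lambda+\gamma) < 1/2$. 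A single bounding-box estimate in the rotated $(x+y,\,x-y)$ frame therefore gives disjointness in all cases at once, without any case split, and the boundary-layer skeleton $S_b$ is handled the same way with far more slack since $\max_{S_b}|x \pm y| = 2\delta$.
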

\begin{proof}[Proof of Lemma \ref{Structure of the near optimal flow: no intersection lemma}]
As $\bs{p}_1 \neq \bs{p}_2$, from the definition (\ref{Structure of the near optimal flow: def F i}) of $\mathcal{N}_i$ , we note the following lower bound on the absolute difference of $x$ and $y$ coordinates of $\bs{p}_1$ and $\bs{p}_2$:
\begin{eqnarray}
|\bs{p}_{1, x} - \bs{p}_{2, x}| \geq \frac{1}{2^i} \quad \text{and} \quad |\bs{p}_{1, y} - \bs{p}_{2, y}| \geq \frac{1}{2^i}, \nonumber
\end{eqnarray}
which implies
\begin{eqnarray}
|\bs{p}_{1} - \bs{p}_{2}|_{\parallel} \geq \frac{\sqrt{2}}{2^i}. \nonumber
\end{eqnarray}
Now, if $\bs{x}_1 \in \left(\supp T^{\, \bs{p}_1} \overline{\bs{u}}(2^i \bs{x}) \cup \supp T^{\, \bs{p}_1} \overline{\xi}(2^i \bs{x}) \right)$ or $\bs{x}_1 \in \left(\supp T^{\, \bs{p}_1} \overline{\bs{u}}_b(2^i \bs{x}) \cup \supp T^{\, \bs{p}_1} \overline{\xi}_b(2^i \bs{x}) \right)$ and if $\bs{x}_2 \in \left(\supp T^{\, \bs{p}_2} \overline{\bs{u}}(2^i \bs{x}) \cup \supp T^{\, \bs{p}_2} \overline{\xi}(2^i \bs{x}) \right)$ or $\bs{x}_2 \in \left(\supp T^{\, \bs{p}_2} \overline{\bs{u}}_b(2^i \bs{x}) \cup \supp T^{\, \bs{p}_2} \overline{\xi}_b(2^i \bs{x}) \right)$, then using the statements (ii) and (iii) from Proposition \ref{Structure of the near optimal flow: properties of parent network}, we see that
\begin{eqnarray}
|\bs{x}_1 - \bs{p}_1|_{\parallel} \leq \frac{\sqrt{2}}{3 \cdot 2^i} \quad \text{and} \quad |\bs{x}_2 - \bs{p}_2|_{\parallel} \leq \frac{\sqrt{2}}{3 \cdot 2^i}. \nonumber
\end{eqnarray}
We can now finish the proof with a simple application of the triangle inequality as
\begin{eqnarray}
|\bs{x}_1 - \bs{x}_2| \geq  |\bs{p}_{1} - \bs{p}_{2}|_{\parallel} - |\bs{x}_1 - \bs{p}_1|_{\parallel} - |\bs{x}_2 - \bs{p}_2|_{\parallel} \geq \frac{\sqrt{2}}{3 \cdot 2^i}. \nonumber
\end{eqnarray}
\end{proof}
Using the lemma, we can write
\begin{eqnarray}
(\overline{\bs{u}}_{int, 1} \cdot \nabla \overline{\xi}_{int, 1})(\bs{x}) = \overline{\bs{u}}(\bs{x}) \cdot \nabla \overline{\xi} (\bs{x}) \, \bs{1}_{Z_1} + \sum_{i = 1}^{N-1} \sum_{\bs{p} \in \mathcal{N}_i} T^{\, \bs{p}}(\overline{\bs{u}}(2^i \bs{x})) \cdot \nabla T^{\, \bs{p}}(\overline{\xi}(2^i \bs{x})) \bs{1}_{Z_{i+1}} \nonumber \\ + \sum_{\bs{p} \in \mathcal{N}_N} T^{\, \bs{p}}(\overline{\bs{u}}_b(2^N \bs{x})) \cdot \nabla T^{\, \bs{p}}(\overline{\xi}_b(2^N \bs{x})) \, \bs{1}_{Z_{N+1}} \quad \text{for} \quad z \in \Lambda, \nonumber
\end{eqnarray}
which implies
\begin{eqnarray}
(\overline{\bs{u}}_{int, 1} \cdot \nabla \overline{\xi}_{int, 1})(\bs{x}) = \overline{\bs{u}}(\bs{x}) \cdot \nabla \overline{\xi} (\bs{x}) \, \bs{1}_{Z_1} + \sum_{i = 1}^{N-1} \sum_{\bs{p} \in \mathcal{N}_i} T^{\, \bs{p}}\left(\overline{\bs{u}}(2^i \bs{x}) \cdot \nabla \overline{\xi}(2^i \bs{x})\right) \bs{1}_{Z_{i+1}} \nonumber \\ + \sum_{\bs{p} \in \mathcal{N}_N} T^{\, \bs{p}}\left(\overline{\bs{u}}_b(2^N \bs{x}) \cdot \nabla \overline{\xi}_b(2^N \bs{x})\right) \, \bs{1}_{Z_{N+1}} \quad \text{for} \quad z \in \Lambda.
\label{Strat: u int 1 int res}
\end{eqnarray}
Using (\ref{Strat: u int 1 int res}) and point (iv) from Proposition \ref{Structure of the near optimal flow: properties of parent network}, we conclude
\begin{eqnarray}
(\overline{\bs{u}}_{int, 1} \cdot \nabla \overline{\xi}_{int, 1})(\bs{x}) = 0 \quad \text{when} \quad z \in \Lambda \setminus \left(\frac{1}{2} - \frac{15}{32 \cdot 2^N}, \frac{1}{2} - \frac{27}{64 \cdot 2^N}\right)
\label{Strat: two supp res}
\end{eqnarray}
A simple calculation then shows that
\begin{eqnarray}
(\overline{\bs{u}}_{N} \cdot \nabla \overline{\xi}_{N})(x, y, z) = - (\overline{\bs{u}}_{N} \cdot \nabla \overline{\xi}_{N})(x, y, -z) \quad \text{when} \quad z \in \Lambda
\label{Strat: a sim cal}
\end{eqnarray}
which, combined with the result (\ref{Strat: two supp res}) and the fact that $\overline{\bs{u}}_N$ and the derivatives of $\overline{\xi}_N$ are continuous when $z \in \Gamma$, help us conclude
\begin{eqnarray}
(\overline{\bs{u}}_{N} \cdot \nabla \overline{\xi}_{N})(\bs{x}) = 0 \quad \text{when} \quad \bs{x} \in \Gamma. \nonumber
\end{eqnarray}
In total, we then have
\begin{eqnarray}
\supp_z (\overline{\bs{u}}_{N} \cdot \nabla \overline{\xi}_{N}) \Subset \left(\frac{1}{2} - \frac{15}{32 \cdot 2^N}, \frac{1}{2} - \frac{27}{64 \cdot 2^N}\right) \cup \left(-\frac{1}{2} + \frac{27}{64 \cdot 2^N}, -\frac{1}{2} + \frac{15}{32 \cdot 2^N}\right). \nonumber
\end{eqnarray}
To prove (iv) in Proposition \ref{Structure of the near optimal flow: flow fields between stripes}, we note from (\ref{Strat: u int 1 int res}) and point (iv) in Proposition \ref{Structure of the near optimal flow: properties of parent network} that 
\begin{eqnarray}
(\overline{\bs{u}}_{int, 1} \cdot \nabla \overline{\xi}_{int, 1})(\bs{x}) = \sum_{\bs{p} \in \mathcal{N}_N} 2^N T^{\, \bs{p}}\left( (\overline{\bs{u}}_b \cdot \nabla \overline{\xi}_b) (2^N \bs{x})\right) \, \bs{1}_{Z_{N+1}} \quad \text{for} \quad z \in \Lambda, \nonumber
\end{eqnarray}
which when combined with Lemma \ref{Structure of the near optimal flow: no intersection lemma}, implies
\begin{eqnarray}
\norm{\overline{\bs{u}}_{int, 1} \cdot \nabla \overline{\xi}_{int, 1}}_{L^\infty(D)} \leq 2^N \norm{\overline{\bs{u}}_b \cdot \nabla \overline{\xi}_b}_{L^\infty(\mathbb{R}^3)}. \nonumber
\end{eqnarray}
Noting (\ref{Strat: a sim cal}) and that $\overline{\bs{u}}_N \cdot \nabla \overline{\xi}_{N}$ coincides with $\ol{\bs{u}}_{int, 1} \cdot \nabla \overline{\xi}_{int, 1}$ when $z > 0$, we have
\begin{eqnarray}
\norm{\overline{\bs{u}}_N \cdot \nabla \overline{\xi}_{N}}_{L^\infty(D)} \leq 2^N \norm{\overline{\bs{u}}_b \cdot \nabla \overline{\xi}_b}_{L^\infty(\mathbb{R}^3)}. \nonumber
\end{eqnarray}
Now $\overline{\bs{u}}_b \cdot \nabla \overline{\xi}_b$ is an infinite differentiable function and its support lies in a bounded set from (iii) and (iv) in Proposition \ref{Structure of the near optimal flow: properties of parent network}, therefore $\norm{\overline{\bs{u}}_b \cdot \nabla \overline{\xi}_b}_{L^\infty(\mathbb{R}^3)}$ is bounded and we can conclude that
\begin{eqnarray}
\norm{\overline{\bs{u}}_N \cdot \nabla \overline{\xi}_{N}}_{L^\infty(D)} \lesssim 2^N. \nonumber
\end{eqnarray}

Proof of (v) in Proposition \ref{Structure of the near optimal flow: flow fields between stripes} is a simple computation. Once again using Lemma \ref{Structure of the near optimal flow: no intersection lemma}, one can write the following
\begin{eqnarray}
\int_{D} |\nabla \overline{\bs{u}}_N|^2 \, {\rm d} \bs{x} && = 2 \int_{\{0 < z < 1/2\}} |\nabla \overline{\bs{u}}_{int, 1}|^2 \, {\rm d} \bs{x} \nonumber \\&& = 2 \int_{\{z \in Z_1\}} |\nabla \overline{\bs{u}}(\bs{x})|^2 \, {\rm d} \bs{x} + 2 \sum_{i = 1}^{N-1} \sum_{\bs{p} \in \mathcal{N}_i} \int_{\{ z \in Z_{i+1}\}} |\nabla  T^{\, \bs{p}}(\overline{\bs{u}}(2^i \bs{x}))|^2 \, {\rm d} \bs{x} \nonumber \\
&& \qquad \qquad \qquad + 2 \sum_{\bs{p} \in \mathcal{N}_N} \int_{\{ z\in Z_{N+1}\}} |\nabla T^{\, \bs{p}}(\overline{\bs{u}}_b(2^N \bs{x}))|^2  \, {\rm d} \bs{x}. \nonumber
\end{eqnarray} 
After an appropriate translation and dilation of the coordinate variables and noting that $| \mathcal{N}_i | = 4^i$, one can show that
\begin{eqnarray}
\int_{D} |\nabla \overline{\bs{u}}_N|^2 \, {\rm d} \bs{x} =  \left(\sum_{i = 0}^{N-1} 2^{i+1}\right)\int_{Z_1} |\nabla \overline{\bs{u}}(\bs{x})|^2 \, {\rm d} \bs{x} + 2^{N+1}\int_{\{z \in Z_1\}} |\nabla \overline{\bs{u}}_b(\bs{x})|^2 \, {\rm d} \bs{x} \nonumber \\
= 2^{N+1} \max\left\{\int_{\{z \in Z_1\}} |\nabla \overline{\bs{u}}(\bs{x})|^2 \, {\rm d} \bs{x}, \int_{\{z \in Z_1\}} |\nabla \overline{\bs{u}}_b(\bs{x})|^2 \, {\rm d} \bs{x}\right\} \lesssim 2^N. \nonumber
\end{eqnarray}
Similarly, one can also conclude 
\begin{eqnarray}
\int_{D} |\nabla \overline{\xi}_N|^2 \, {\rm d} \bs{x}  \lesssim 2^N, \nonumber
\end{eqnarray}
which then proves (v).

The proof of (vi) in Proposition \ref{Structure of the near optimal flow: flow fields between stripes} is also very similar to that of (v). We first write
\begin{eqnarray}
\int_{D} \ol{u}_{N, z} \, \ol{\xi}_{N} \, {\rm d} \bs{x} && = 2 \int_{\{0 < z < 1/2\} } \ol{u}_{int, 1, z} \, \ol{\xi}_{int, 1} \, {\rm d} \bs{x} \nonumber \\&& = 2 \int_{\{z \in Z_1\}} \ol{u}_z \, \ol{\xi} \, {\rm d} \bs{x} + 2 \sum_{i = 1}^{N-1} \sum_{\bs{p} \in \mathcal{N}_i} \int_{\{ z \in Z_{i+1}\}} T^{\, \bs{p}} \left(\ol{u}_z (2^i \bs{x}) \right) \, T^{\, \bs{p}} \left(\ol{\xi} (2^i \bs{x}) \right) \, {\rm d} \bs{x} \nonumber \\
&& \qquad \qquad \qquad + 2 \sum_{\bs{p} \in \mathcal{N}_N} \int_{\{ z\in Z_{N+1}\}} T^{\, \bs{p}} \left(\ol{u}_{b, z} (2^i \bs{x}) \right) \, T^{\, \bs{p}} \left(\ol{\xi}_b (2^i \bs{x}) \right) \, {\rm d} \bs{x}. \nonumber
\end{eqnarray} 
After an appropriate translation and dilation of the coordinate variables and noting that $| \mathcal{N}_i | = 4^i$, we obtain
\begin{eqnarray}
\int_{D} \ol{u}_{N, z} \, \ol{\xi}_{N} \, {\rm d} \bs{x} =  \left(\sum_{i = 0}^{N-1} 2^{-i+1}\right)\int_{\{z \in Z_1\}} \ol{u}_z \, \ol{\xi} \, {\rm d} \bs{x} + 2^{-N+1}\int_{\{z \in Z_1\}} \ol{u}_{b, z} \, \ol{\xi}_b \, {\rm d} \bs{x} \geq 2 c_3 > 0, \nonumber
\end{eqnarray}
where $c_3$ is a strictly positive constant independent of $N$.
\end{proof}

\section{A useful estimate for the solution of the Poisson's equation: Proof of Proposition \ref{Poisson's equation: inverse Laplace torus to D}}
\label{Section: An estimate solution of Poisson}

The aim of this section is to give an estimate on the solution of Poisson's equation $ \Delta \varphi = f $
solved between parallel boundaries with homogeneous Dirichlet boundary conditions imposed on $\varphi$. In particular, we are interested in obtaining bounds on the $L^2$ norm of $\nabla \varphi$ for a given specific form of the function $f$. The calculations done in this section will be helpful in establishing an upper bound on the nonlocal term $\dashint_{\Omega}|\nabla \Delta^{-1} \diverge (\bs{u} \xi)|^2$ from the section \ref{Strategy and proof} (see calculation (\ref{Strategy and proof of the main theorem: estimate nonlocal req. poisson})). The basic idea is to write down the solution of Poisson's equation using the Green's function method and then obtain estimates on the derivative of the Green's function to achieve our goal.  

The domain of interest for this section is
\begin{eqnarray}
D \coloneqq \mathbb{R} \times \mathbb{R} \times (-1/2, 1/2), \nonumber
\end{eqnarray}
as defined in section \ref{Notation} with boundary
\begin{eqnarray}
\partial D \coloneqq \partial D_+ \cup \partial D_- \coloneqq \mathbb{R} \times \mathbb{R} \times \{1/2\} \cup \mathbb{R} \times \mathbb{R} \times \{-1/2\}.  \nonumber
\end{eqnarray}
Now, suppose $\varphi$ solves Poisson's equation
\begin{eqnarray}
\Delta \varphi = f \quad \text{in } D, 
\label{The Poisson's equation: the Poisson's equation}
\end{eqnarray}
with boundary condition
\begin{eqnarray}
\varphi = 0 \quad \text{on} \quad \partial D. 
\label{The Poisson's equation: boundary conditions}
\end{eqnarray}
Then for a sufficiently smooth function $f$, we can write the solution of Poisson's equation using a Green's function
\begin{eqnarray}
\varphi(\bs{x}) = \int_D G(\bs{x}, \bs{x}^\prime) f(\bs{x}^\prime) \, {\rm d} \bs{x}, 
\label{The Poisson's equation: Green's function formula}
\end{eqnarray}
where $G : D \times D \to [-\infty, \infty]$ is given by
\begin{eqnarray}
G(\bs{x}, \bs{x}^\prime) \coloneqq  K(|\bs{x} - \bs{x}^\prime|_\parallel, z, z^\prime) 
\label{The Poisson's equation: Green's function}
\end{eqnarray}
and
\begin{eqnarray}
&& K(\sigma, z, z^\prime) \coloneqq  \int_{1}^\infty I(\sigma, z, z^\prime, \tau) \, \frac{{\rm d} \tau}{\sqrt{\tau^2 - 1}}, 
\label{The Poisson's equation: K}
\\
&& I(\sigma, z, z^\prime, \tau) \coloneqq 
\frac{\cos (\pi z) \cos (\pi z^\prime) \sinh (\pi \tau \sigma )}{ 2 \pi \left[ \cosh (\pi \tau \sigma ) + \cos \pi (z^\prime + z) \right] \left[ \cosh (\pi \tau \sigma ) - \cos \pi (z^\prime - z) \right]}.
\label{The Poisson's equation: I}
\end{eqnarray}
In particular, we have the following theorem
\begin{theorem}[Solution of the Poisson's equation]
\label{Poisson's equation: Solution of the Poisson's equation thm}
Let $f \in C^2(D) \cap L^\infty(D)$ whose is support lies a finite distance away from the boundary, i.e., $ \supp f \subseteq \mathbb{R} \times \mathbb{R} \times (-1/2 + \beta, 1/2 - \beta)$ for some $\beta \in (0, 1/2)$. Then $\varphi$ given by (\ref{The Poisson's equation: Green's function formula}) belongs to $C^2(D)$ and solves the Poisson's equation (\ref{The Poisson's equation: the Poisson's equation}) with boundary condition (\ref{The Poisson's equation: boundary conditions}).
\end{theorem}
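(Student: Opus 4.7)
The plan is to identify $G(\bs{x},\bs{x}') = K(|\bs{x}-\bs{x}'|_{\parallel}, z, z')$ as the Dirichlet Green's function for $\Delta$ on the slab $D$, decompose $\varphi$ into a classical Newtonian potential plus a smooth correction, and then apply standard theory to each piece.

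I would first collect basic properties of $K$. For $\sigma > 0$ and $z,z' \in (-1/2, 1/2)$ the integrand $I$ in (\ref{The Poisson's equation: K}) behaves like $2 e^{-\pi \tau \sigma}$ for large $\tau \sigma$ (since $\sinh$ and the two $\cosh$ factors all carry the leading term $e^{\pi \tau \sigma}/2$), while $1/\sqrt{\tau^2-1}$ is integrable near $\tau=1$, so $K$ is well-defined and smooth in $(\sigma, z, z')$ for $\sigma > 0$. The factor $\cos(\pi z)\cos(\pi z')$ forces $G|_{\partial D} = 0$. A direct differentiation of $I$ (viewed in cylindrical coordinates with radial variable $\sigma$) shows that the three-dimensional Laplacian applied to $I$ vanishes identically, so $\Delta_{\bs{x}} G = 0$ on $D \setminus \{\bs{x}'\}$. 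The same exponential estimate gives $|K(\sigma, z, z')| \lesssim e^{-c\sigma}/\sqrt{\sigma}$ as $\sigma \to \infty$, uniformly for $z, z'$ in compact subintervals of $(-1/2, 1/2)$, so $K(\,\cdot\,, z, z')$ is integrable in $(x,y)$ and the defining integral for $\varphi$ converges absolutely under the hypotheses on $f$.

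The pivotal step is to identify $G$ with the image-sum Green's function
\begin{equation*}
G(\bs{x}, \bs{x}') = -\frac{1}{4\pi} \sum_{n \in \mathbb{Z}} \left( \frac{1}{|\bs{x} - \bs{x}'_n|} - \frac{1}{|\bs{x} - \bs{y}'_n|} \right),
\end{equation*}
where $\bs{x}'_n = (x', y', z' + 2n)$ and $\bs{y}'_n = (x', y', 1 - z' + 2n)$. The sum converges absolutely because consecutive image pairs are separated by bounded vertical distance, making each summand $O(n^{-2})$. One can establish this identity by Poisson summation in the vertical variable applied to the integral representation of $K$, or by uniqueness in the class of bounded harmonic functions on the slab vanishing on $\partial D$ with a prescribed Newtonian singularity. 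Isolating the $n=0$ term then gives the decomposition $G = G_0 + H$ where $G_0(\bs{x}, \bs{x}') = -1/(4\pi |\bs{x} - \bs{x}'|)$ is the free-space Green's function and $H$ is the sum of all remaining reflected contributions; each such term has its pole outside $\ol{D}$, so $H \in C^\infty(D \times D)$, $\Delta_{\bs{x}} H = 0$ on $D$, and all derivatives are uniformly bounded on compact subsets.

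Granted this, write $\varphi = \int_D G_0 f\,\mathrm{d}\bs{x}' + \int_D H f\,\mathrm{d}\bs{x}'$. The second integral is $C^\infty$ in $\bs{x}$ and harmonic: the compact $z$-support of $f$, the smoothness of $H$, and the horizontal integrability obtained from the image-sum tails together justify differentiation under the integral sign. The first integral is the classical Newtonian potential of a function in $C^2 \cap L^\infty$; after approximating $f$ by $f_R$ supported in $|\bs{x}_\parallel| \le R$ and passing to the limit $R \to \infty$ using the exponential horizontal decay of $G$ to control tails, the standard potential-theoretic result of Gilbarg--Trudinger delivers $\int_D G_0 f \in C^2(D)$ with Laplacian equal to $f$. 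Summing the two contributions yields $\varphi \in C^2(D)$ and $\Delta \varphi = f$, while $\varphi|_{\partial D} = 0$ follows from $G|_{\partial D} = 0$. The hard part will be the image-sum identification and the management of horizontal tails, since $f$ is not required to decay in $(x, y)$---all the convergence and regularity arguments ultimately hinge on the exponential decay of $K$ in $\sigma$, which one must extract carefully from the given integral representation.
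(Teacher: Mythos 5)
Your overall route --- identify $K$ with the method-of-images sum, peel off the free-space fundamental solution, and apply classical potential theory --- is essentially the one the paper itself gestures at (the paper invokes the image sum and converts it to the integral $K$ via Cauchy's residue theorem; you propose Poisson summation or a uniqueness argument for the same identification, which are close variants of the same idea). The image configuration, the $O(n^{-2})$ cancellation between image pairs, and the exponential bound $|K| \lesssim e^{-\pi\sigma}/\sqrt{\sigma}$ are all correct.

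The gap is in the regularity step. The decomposition $\varphi = \int_D G_0 f\,{\rm d}\bs{x}^\prime + \int_D H f\,{\rm d}\bs{x}^\prime$ does not work as written, because under the hypotheses ($f\in L^\infty(D)$ with compact $z$-support but \emph{no decay in $(x,y)$}) neither integral converges. The kernel $G_0$ decays only like $1/|\bs{x}-\bs{x}^\prime|_\parallel$, so $\int_D G_0 f$ diverges; and since $G = G_0 + H$ decays exponentially in $\sigma$, the correction $H$ must cancel the algebraic tail of $G_0$, so $H$ itself decays only like $1/\sigma$ and $\int_D H f$ diverges as well. The exponential decay you extract from $K$ belongs to the full $G$, not to $G_0$ or $H$ individually, so the truncation argument with $f_R$ does not help: $\int_D G f_R$ converges as $R\to\infty$, but $\int_D G_0 f_R$ and $\int_D H f_R$ separately blow up. The standard fix is to localize the singularity: pick a smooth cutoff $\chi = \chi(|\bs{x}-\bs{x}^\prime|)$ equal to $1$ near the diagonal and supported in $|\bs{x}-\bs{x}^\prime| \leq 1$, and split $G = G_0\chi + (G - G_0\chi)$. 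The first kernel is compactly supported in $\bs{x}^\prime$, so $\int_D (G_0\chi) f$ is a genuine Newtonian potential of a bounded $C^2$ (hence locally H\"older) density and Gilbarg--Trudinger gives $C^2$ regularity with the correct Laplacian. The second kernel is smooth across the diagonal (since $G-G_0$ is, and $G_0(1-\chi)$ is) and inherits the exponential horizontal decay of $G$ off the diagonal, so $\int_D (G - G_0\chi) f$ is harmonic and smooth by differentiation under the integral sign. With that modification the rest of your argument, including the boundary condition from $\cos(\pi z)$ in the numerator, goes through.
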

\begin{proof}
The proof of the theorem is a standard one and is therefore omitted from the paper. The proof relies on the method of images to write the desired Green's function between parallel boundaries as a sum of appropriately translated Green's functions corresponding to the whole space $\mathbb{R}^3$, where the summation is then performed using Cauchy's residue theorem.
\end{proof}

Once we know that the solution $\varphi$ of the Poisson's equation is given by  (\ref{The Poisson's equation: Green's function formula}), we can use it to calculate $\nabla \varphi$. If $f \in L^\infty(D)$ then by an application of the mean value theorem and the dominated convergence theorem, we can perform differentiation under the integral sign in (\ref{The Poisson's equation: Green's function formula}), which leads to
\begin{eqnarray}
\nabla \varphi = \int_D \nabla_{\bs{x}} G(\bs{x}, \bs{x}^\prime) f(\bs{x}^\prime) \, {\rm d} \bs{x}.
\label{The Poisson's equation: grad phi}
\end{eqnarray}
From (\ref{The Poisson's equation: grad phi}), we see that estimates on $\nabla_{\bs{x}} G(\bs{x}, \bs{x}^\prime)$ can provide an upper bound on $|\nabla \varphi|$. Next, we state our result in that direction, but first, we note the following.

For clarity, we use $a$ and $b$ as placeholders for 
\begin{eqnarray}
\left|\frac{2}{\pi}\sin \left(\frac{\pi (z - z^\prime)}{2} \right)\right| \quad \text{and} \quad \left|\frac{2}{\pi}\cos \left( \frac{\pi (z + z^\prime)}{2} \right) \right|
\label{The Poisson's equation: a b placeholders}
\end{eqnarray}
respectively in the rest of this section and we will use the fact that
\begin{eqnarray}
b^2 - a^2 = \frac{4}{\pi^2} \cos \pi z \cos \pi z^\prime \geq 0 \quad \text{when} \quad z, z^\prime \in (-1/2, 1/2)
\label{The Poisson's equation: a b placeholders a simple equation}
\end{eqnarray}
in several places. We will use $c$ for a positive constant (not necessarily the same in all places) independent of any parameters.
\begin{proposition}
\label{Poisson's equation: uppper bound on grad phi prop}
Let $f \in L^\infty(D)$ and let $\varphi$ be defined by the formula (\ref{The Poisson's equation: Green's function formula}), then the following holds:
\begin{eqnarray}
|\nabla \varphi| (\bs{x}) \leq \norm{f}_{L^\infty(D)} \int_{\supp_z f} g(z, z^\prime) \, {\rm d} z^\prime \nonumber
\end{eqnarray}
where 
\begin{eqnarray}
g(z, z^\prime) = c \left(\log \left(1 + \frac{(b^2-a^2)}{a^2}\right) +  \frac{\cos \pi z^\prime}{b}\right)
\label{The Poisson's equation: the function g z z prime}
\end{eqnarray}
and $c > 0$ is a positive constant.
\end{proposition}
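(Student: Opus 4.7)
The plan is to differentiate $\varphi$ under the integral in (\ref{The Poisson's equation: Green's function formula}) to obtain (\ref{The Poisson's equation: grad phi}), which is justified by $f \in L^\infty(D)$ and the local integrability of $\nabla G$. The triangle inequality then gives
\begin{eqnarray*}
|\nabla \varphi|(\bs{x}) \le \|f\|_{L^\infty(D)} \int_D |\nabla_{\bs{x}} G(\bs{x}, \bs{x}^\prime)| \, {\rm d}\bs{x}^\prime.
\end{eqnarray*}
Since $G(\bs{x}, \bs{x}^\prime) = K(|\bs{x}-\bs{x}^\prime|_\parallel, z, z^\prime)$ depends on the horizontal variables only through $\sigma = |\bs{x}-\bs{x}^\prime|_\parallel$, I would pass to polar coordinates, giving $\int_{\mathbb{R}^2} |\nabla_{\bs{x}} G| \, {\rm d}\bs{x}^\prime_\parallel = 2\pi \int_0^\infty \sigma |\nabla_{\bs{x}} K| \, {\rm d}\sigma$. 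Using $|\nabla_{\bs{x}} K| \le |\partial_\sigma K| + |\partial_z K|$ and restricting the outer $z^\prime$-integration to $\supp_z f$, the proposition reduces to the one-variable estimate
\begin{eqnarray*}
2\pi \int_0^\infty \sigma \bigl(|\partial_\sigma K| + |\partial_z K|\bigr) \, {\rm d}\sigma \le g(z, z^\prime).
\end{eqnarray*}

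The key computation proceeds via a partial-fraction decomposition of the kernel $I$ in (\ref{The Poisson's equation: I}). Using $\cos \pi z \cos \pi z^\prime = \tfrac{1}{2}[\cos \pi(z-z^\prime) + \cos \pi(z+z^\prime)]$ and factoring in $\mu = \cosh(\pi \tau \sigma)$ yields
\begin{eqnarray*}
I = \frac{\sinh(\pi \tau \sigma)}{4\pi} \left[\frac{1}{\cosh(\pi\tau\sigma) - \cos\pi(z-z^\prime)} - \frac{1}{\cosh(\pi\tau\sigma) + \cos\pi(z+z^\prime)}\right],
\end{eqnarray*}
and each summand is the derivative with respect to $h = \pi\tau\sigma$ of $\log(\cosh h \mp \cos\alpha)$, which makes the inner $\sigma$-integrals tractable. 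By Tonelli ($I \ge 0$) and the change of variable $h = \pi\tau\sigma$, one finds $\int_0^\infty I \, {\rm d}\sigma = \log(b^2/a^2)/(4\pi^2\tau)$ using (\ref{The Poisson's equation: a b placeholders a simple equation}), whence $\int_0^\infty K \, {\rm d}\sigma = \log(b^2/a^2)/(8\pi)$ via the standard identity $\int_1^\infty {\rm d}\tau/(\tau\sqrt{\tau^2-1}) = \pi/2$. Integration by parts (the boundary terms vanish off the diagonal: $K$ decays exponentially as $\sigma \to \infty$ and $\sigma K(\sigma, z, z^\prime) \to 0$ as $\sigma \to 0$ when $z \neq z^\prime$) then gives $\int_0^\infty \sigma \partial_\sigma K \, {\rm d}\sigma = -\log(b^2/a^2)/(8\pi)$, producing the logarithmic summand of $g$. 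The $z$-derivative is handled analogously: $\partial_z K$ is computed from the same decomposition, an analogous Fubini plus integration-by-parts argument applies, and after product-to-sum simplifications the result consolidates into the $\cos\pi z^\prime/b$ summand.

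The main obstacle is that these explicit evaluations produce only the \emph{signed} integrals, whereas $g$ bounds the integrals of $|\partial_\sigma K|$ and $|\partial_z K|$. Because $K(\cdot, z, z^\prime)$ vanishes at both $\sigma = 0$ (when $z \neq z^\prime$) and $\sigma = \infty$ while being strictly positive in between, $\partial_\sigma K$ genuinely changes sign; moreover, neither the triangle inequality inside the $\tau$-integral nor a term-by-term bound on the two partial-fraction pieces succeeds, because each individual piece has a divergent $\sigma$-primitive and only their difference produces the finite $\log(b^2/a^2)$. I would resolve this by splitting the $\sigma$-integral at the location $\sigma_*(z, z^\prime)$ of the unique maximum of $K$ and applying integration by parts separately on $(0, \sigma_*)$ and $(\sigma_*, \infty)$, which reduces matters to controlling $\sigma_* K(\sigma_*)$ by a quantity of the same order as $g$. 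The latter bound follows by dominating the slab Green's function by the free-space kernel $1/(4\pi|\bs{x}-\bs{x}^\prime|)$ near the source and by exponential decay in the image sum far away. The cancellation between the two partial-fraction pieces must be kept intact throughout, as only their difference yields the finite logarithmic term characterizing $g$.
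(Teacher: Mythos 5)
Your overall plan (differentiate under the integral, pass to polar coordinates, and reduce to a one‑variable bound $2\pi\int_0^\infty\sigma(|\partial_\sigma K|+|\partial_z K|)\,{\rm d}\sigma\le g(z,z')$) is the same reduction the paper performs, and your partial‑fraction identity and the ensuing exact evaluations $\int_0^\infty I\,{\rm d}\sigma=\frac{1}{4\pi^2\tau}\log(b^2/a^2)$ and $\int_0^\infty K\,{\rm d}\sigma=\frac{1}{8\pi}\log(b^2/a^2)$ are correct and pleasant. You have also correctly identified the central obstruction: these are \emph{signed} integrals, while $g$ must control integrals of $|\partial_\sigma K|$ and $|\partial_z K|$. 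However, the repair you propose does not close the gap, for three concrete reasons.

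First, you assert that $K(\cdot,z,z')$ has a unique interior maximum $\sigma_*$, but do not prove it. Each $I(\cdot,z,z',\tau)$ is indeed unimodal (one can check the relevant cubic in $\mu=\cosh(\pi\tau\sigma)$ has a single root past $\mu=1$), but $K(\sigma)=\int_{\pi\sigma}^{\infty}\mathcal{I}(h)\,{\rm d}h/\sqrt{h^2-\pi^2\sigma^2}$ is an Abel‑type average of unimodal profiles with $\tau$‑dependent modes, and such averages are not in general unimodal; without this you would accumulate an uncontrolled number of boundary terms $\sigma_i K(\sigma_i)$ from the piecewise integration by parts.

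Second, even granting unimodality, your bound requires $\sigma_*K(\sigma_*)\lesssim g(z,z')$, and the free‑space dominance $|K|\le\frac{1}{4\pi\sqrt{\sigma^2+(z-z')^2}}$ only gives $\sigma_*K(\sigma_*)\le\frac{1}{4\pi}$. This is too coarse: when $z'$ is near a wall but $z$ is not, both summands of $g$ are $O(\cos\pi z')\to 0$ (since $b^2-a^2=\frac{4}{\pi^2}\cos\pi z\cos\pi z'\to 0$ with $a$ bounded away from $0$, and $b$ is bounded below by $\cos\pi z/\pi$), yet your constant bound does not vanish. The ``image sum'' fix you mention would need to actually extract the factor $\cos\pi z\cos\pi z'$ that the numerator of $I$ carries, and you do not carry this out; and obtaining the right $(a,b)$ dependence from it, rather than a crude constant, is essentially the whole content of the estimate.

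Third, the treatment of $\partial_z K$ is not ``analogous.'' Integration by parts in $\sigma$ telescopes $\int_0^\infty\sigma\,\partial_\sigma K\,{\rm d}\sigma$ against $\int_0^\infty K\,{\rm d}\sigma$, but $\int_0^\infty\sigma\,\partial_z K\,{\rm d}\sigma=\partial_z\int_0^\infty\sigma K\,{\rm d}\sigma$, which requires evaluating the new (and less tractable) integral $\int_0^\infty\sigma K\,{\rm d}\sigma$ rather than $\int_0^\infty K\,{\rm d}\sigma$; and the sign‑definiteness/unimodality questions for $\partial_z K$ are separate issues you do not address. For comparison, the paper takes a genuinely different route: it does \emph{not} use your partial‑fraction split, but instead applies the quotient rule to (\ref{The Poisson's equation: I}) to write each of $\partial_\sigma K$ and $\partial_z K$ as the $\tau$‑integral of three explicit terms $I_{\sigma j}$ and $I_{zj}$, bounds these pointwise in the regimes $\sigma\gtrless\frac{1}{2\pi}$ and $\tau\gtrless\frac{1}{\pi\sigma}$ (Lemmas \ref{Poisson's equation: bound on I derivative large sigma}–\ref{Poisson's equation: bound on I derivative small sigma small tau}), and the crucial cancellation $\frac{1}{\sqrt{\sigma^2+a^2}}-\frac{1}{\sqrt{\sigma^2+b^2}}$ emerges from the exact $\tau$‑integral of the common dominator $\frac{\tau}{(\sigma^2\tau^2+a^2)(\sigma^2\tau^2+b^2)}$, after which the $\sigma$‑integrals in Lemma \ref{Poisson's equation: a few important basic integrals} produce $g$ directly, with no appeal to unimodality or signed‑integral evaluations.
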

The functions $f$ that are of special interests to us are those which are supported in a ``thin layers'' close to the boundaries. From Proposition \ref{Poisson's equation: uppper bound on grad phi prop}, we can derive the following result for such functions.
\begin{corollary}
\label{Poisson's equation: uppper bound on grad phi prop for f with thin supp}
Let $f \in L^\infty(D)$ such that $\supp f \subseteq \mathbb{R} \times \mathbb{R} \times (1/2 - c_1 \varepsilon, 1/2 - c_2 \varepsilon) \cup \mathbb{R} \times \mathbb{R} \times (-1/2 + c_2 \varepsilon, -1/2 + c_1 \varepsilon)$, where $0 < c_2 < c_1 < 1$ and $\varepsilon < 1/4$ are three constants. If $\varphi$ is defined by the formula (\ref{The Poisson's equation: Green's function formula}), then the following holds
\begin{eqnarray}
\frac{1}{l_x l_y}\int_{-l_x/2}^{l_x/2} \int_{-l_y/2}^{l_y/2} \int_{-1/2}^{1/2} |\nabla \varphi|^2 \, {\rm d} z {\rm d} y {\rm d} x \lesssim \varepsilon^3 \norm{f}_{L^\infty(D)}^2.
\label{The Poisson's equation: f with thin supp grad phi estimate}
\end{eqnarray}
\end{corollary}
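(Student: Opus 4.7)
The strategy is to bootstrap on Proposition \ref{Poisson's equation: uppper bound on grad phi prop}. Because the bound $|\nabla \varphi|(\bs x) \leq \norm{f}_{L^\infty(D)} \cdot G(z)$ with $G(z) \coloneqq \int_{\supp_z f} g(z, z')\,{\rm d} z'$ depends only on $z$ (it is uniform in the horizontal variables), the normalized horizontal average on the left of (\ref{The Poisson's equation: f with thin supp grad phi estimate}) collapses immediately and the problem reduces to the one-dimensional inequality
\begin{equation*}
\int_{-1/2}^{1/2} G(z)^2 \,{\rm d} z \lesssim \varepsilon^3.
\end{equation*}

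I would exploit the $(z,z')\mapsto(-z,-z')$ invariance of the kernel $g$ to restrict to $z \in (0, 1/2)$. Two sub-cases then arise, according to whether $z'$ lies near $+1/2$ (the ``same-side'' case) or near $-1/2$ (the ``opposite-side'' case). Setting $\zeta \coloneqq 1/2 - z$ and $s \coloneqq 1/2 - z' \in (c_2\varepsilon, c_1\varepsilon)$ in the same-side case, the leading-order expansions $a \sim |\zeta - s|$, $b \sim \zeta + s$, $\cos\pi z' \sim \pi s$, together with the identity $b^2 - a^2 = (4/\pi^2)\cos\pi z\cos\pi z'$ already recorded in the paper, yield
\begin{equation*}
1 + \frac{b^2 - a^2}{a^2} \sim \Big(\frac{\zeta + s}{|\zeta - s|}\Big)^2, \qquad \frac{\cos\pi z'}{b} \sim \frac{s}{\zeta + s},
\end{equation*}
so that $g(z,z') \lesssim \log\frac{\zeta+s}{|\zeta - s|} + \frac{s}{\zeta + s}$. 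The opposite-side case is much easier, since $a$ and $b$ are both bounded away from $0$ there; a direct expansion gives $g \lesssim s \lesssim \varepsilon$, and hence a total contribution of order $\varepsilon^4$, which is harmless.

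For the same-side case I would split further into a far-field regime $\zeta \gtrsim \varepsilon$ and a near-field regime $\zeta \lesssim \varepsilon$. In the far field $\zeta > s$, Taylor expansion shows that both terms in $g$ are $\sim s/\zeta$, hence $G(z) \lesssim \varepsilon^2/\zeta$ and $\int_{\varepsilon}^{1/2}(\varepsilon^2/\zeta)^2\,{\rm d}\zeta \lesssim \varepsilon^3$. In the near field the bounded piece $s/(\zeta+s)$ already integrates to $O(\varepsilon)$; for the logarithmic piece, the rescaling $s = \varepsilon\tau$, $\zeta = \varepsilon u$ turns it into
\begin{equation*}
\int_{c_2\varepsilon}^{c_1\varepsilon} \log\frac{\zeta + s}{|\zeta - s|}\,{\rm d} s = \varepsilon \int_{c_2}^{c_1}\log\frac{u + \tau}{|u - \tau|}\,{\rm d}\tau,
\end{equation*}
and the latter integral is uniformly bounded in $u \in (0, \infty)$ (it has only an integrable singularity at $\tau = u$, and tends to $0$ as $u \to 0$ or $u \to \infty$). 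Thus $G(z) \lesssim \varepsilon$ on $\zeta \lesssim \varepsilon$ and $\int_0^{\varepsilon} G(z)^2\,{\rm d}\zeta \lesssim \varepsilon^3$. The main obstacle is precisely this logarithmic singularity at $\zeta = s$, arising from the vanishing of $a$ on the diagonal $z = z'$; it is the $\varepsilon$-rescaling above that makes the sharp scaling transparent. Adding the two regimes and invoking symmetry to cover the remaining sectors completes the estimate.
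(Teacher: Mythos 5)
Your proposal is correct and follows essentially the same route as the paper's proof: it reduces to a one-dimensional estimate via Proposition~\ref{Poisson's equation: uppper bound on grad phi prop}, separates the same-side and opposite-side contributions in $z'$, then splits the same-side contribution into a far-field regime (where $\log(1+x)\le x$ is applied and the bound degenerates like $\varepsilon^2/\zeta$) and a near-field regime $\zeta\lesssim\varepsilon$ (where the logarithmic singularity on the diagonal $z=z'$ is retained but shown to be integrable), exactly as in the paper's treatment of the three pieces produced by Young's inequality in~(\ref{The Poisson's equation: int in z prime split}). The substitution $\zeta=1/2-z$, $s=1/2-z'$ and the $\varepsilon$-rescaling are a transparent repackaging of the paper's raw trigonometric estimates~(\ref{The Poisson's equation: basic trig inequalities}{\color{blue}a-h}), not a different argument.
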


\begin{proof}[Proof of Proposition \ref{Poisson's equation: inverse Laplace torus to D}]
We identify a $l_x-l_y-$periodic function on $D$ with the function $f$. Then using Corollary \ref{Poisson's equation: uppper bound on grad phi prop for f with thin supp}, we can finish the proof.
\end{proof}
\begin{proof}[Proof of Corollary \ref{Poisson's equation: uppper bound on grad phi prop for f with thin supp}]
We note from Proposition \ref{Poisson's equation: uppper bound on grad phi prop}
\begin{eqnarray}
\frac{1}{l_x l_y} \int_{-l_y/2}^{l_y/2} \int_{-l_x/2}^{l_x/2} \int_{-1/2}^{1/2} |\nabla \varphi|^2 \, {\rm d} z {\rm d} x {\rm d} y \leq \norm{f}_{L^\infty(D)}^2 \int_{-1/2}^{1/2} \left(\int_{\supp_z f} g(z, z^\prime) \, {\rm d} z^\prime\right)^2 \, {\rm d} z \nonumber \\
 = \norm{f}_{L^\infty(D)}^2 \int_{0}^{1/2} \left(\int_{\supp_z f} g(z, z^\prime) \, {\rm d} z^\prime\right)^2 \, {\rm d} z + \norm{f}_{L^\infty(D)}^2 \int_{-1/2}^{0} \left(\int_{\supp_z f} g(z, z^\prime) \, {\rm d} z^\prime\right)^2 \, {\rm d} z.
\label{The Poisson's equation: main integral split}
\end{eqnarray}
We focus on obtaining a bound on the first term (where the integral is carried from $z = 0$ to $z = 1/2$) in (\ref{The Poisson's equation: main integral split}), as the calculation for the other integral is identical.

When $z^\prime \in (1/2 - c_1 \varepsilon, 1/2 - c_2 \varepsilon) \cup (-1/2 + c_2 \varepsilon, -1/2 + c_1 \varepsilon)$ and $z \geq 0$, the following simple succession of inequalities hold:
\begin{subequations}
\begin{eqnarray}
&& b \geq \frac{1}{\pi}\max\{\cos \pi z, \cos \pi z^\prime\} \\
&& b \geq \frac{1}{2 \pi}(\cos \pi z + \cos \pi z^\prime) \geq \frac{1}{4} \left(\frac{1}{2} - z + c_2 \varepsilon\right) \\
&& \frac{\pi c_2 \varepsilon}{2} \leq \cos \pi z^\prime \leq \pi c_1 \varepsilon \\
&& \frac{\pi}{4} - \frac{\pi z}{2} \leq \cos \pi z \leq \frac{\pi}{2} - \pi z \\
&& a \geq \frac{1}{2} \left(\frac{1}{2} - c_1 \varepsilon - z\right) \quad \text{when} \quad \frac{1}{2} - 2 c_1 \varepsilon \geq z \geq 0 \\
&& a \geq \frac{1}{2} |z - z^\prime| \quad \text{when} \quad \frac{1}{2} \geq z \geq \frac{1}{2} - 2 c_1 \varepsilon  \\
&& a \geq \frac{1}{5} \quad \text{when} \quad  z^\prime \in (-1/2 + c_2 \varepsilon, -1/2 + c_1 \varepsilon) \\
&& \log(1 + \alpha) \leq \alpha \quad \text{when} \quad \alpha \geq 0
\end{eqnarray}
\label{The Poisson's equation: basic trig inequalities}
\end{subequations}
Here, (\ref{The Poisson's equation: basic trig inequalities}{\color{blue}c}), (\ref{The Poisson's equation: basic trig inequalities}{\color{blue}d}), (\ref{The Poisson's equation: basic trig inequalities}{\color{blue}e}) and (\ref{The Poisson's equation: basic trig inequalities}{\color{blue}f}) are a simple consequence of the inequality $z/2 \leq \sin z \leq z$ when $z \in [0, \pi/2]$, whereas (\ref{The Poisson's equation: basic trig inequalities}{\color{blue}a}) is obtained by simple applications of trigonometric identities and (\ref{The Poisson's equation: basic trig inequalities}{\color{blue}b}) is a result of (\ref{The Poisson's equation: basic trig inequalities}{\color{blue}a}), (\ref{The Poisson's equation: basic trig inequalities}{\color{blue}c}) and (\ref{The Poisson's equation: basic trig inequalities}{\color{blue}d}). The result (\ref{The Poisson's equation: basic trig inequalities}{\color{blue}g}) is a consequence of the assumption $0 < c_2 < c_1 < 1$. Finally, (\ref{The Poisson's equation: basic trig inequalities}{\color{blue}h}) can be derived using a Taylor series expansion.

Next, using (\ref{The Poisson's equation: the function g z z prime}) and the Young's inequality, we can write
\begin{eqnarray}
\left(\int_{\supp_z f} g(z, z^\prime) \, {\rm d} z^\prime\right)^2 \lesssim \left(\int_{\supp_z f \cap \mathbb{R}_{+}} \log \left(1 + \frac{(b^2-a^2)}{a^2}\right) \, {\rm d} z^\prime\right)^2 \qquad \qquad  \qquad \qquad  \nonumber \\ + \left(\int_{\supp_z f \cap \mathbb{R}_{-}} \log \left(1 + \frac{(b^2-a^2)}{a^2}\right) \, {\rm d} z^\prime\right)^2   
+ \left(\int_{\supp_z f}  \frac{\cos \pi z^\prime}{b} \, {\rm d} z^\prime\right)^2
\label{The Poisson's equation: int in z prime split}
\end{eqnarray}

Using (\ref{The Poisson's equation: basic trig inequalities}{\color{blue}b}) and (\ref{The Poisson's equation: basic trig inequalities}{\color{blue}c}), the last term in (\ref{The Poisson's equation: int in z prime split}) can be bounded from above by
\begin{eqnarray}
\lesssim \frac{\varepsilon^4}{\left(\frac{1}{2} - z + c_2 \varepsilon\right)^2},
\label{The Poisson's equation: int in z prime split res 1}
\end{eqnarray}
Using (\ref{The Poisson's equation: a b placeholders a simple equation}), (\ref{The Poisson's equation: basic trig inequalities}{\color{blue}c}), (\ref{The Poisson's equation: basic trig inequalities}{\color{blue}g}) and (\ref{The Poisson's equation: basic trig inequalities}{\color{blue}h}), the  second term in (\ref{The Poisson's equation: int in z prime split}) satisfies the bound
\begin{eqnarray}
\lesssim \varepsilon^4.
\label{The Poisson's equation: int in z prime split res 2}
\end{eqnarray}
We divide the calculation of the first term in (\ref{The Poisson's equation: int in z prime split}) into two cases, when $0 \leq z \leq 1/2 - 2 c_1 \varepsilon$ and when $ 1/2 - 2 c_1 \varepsilon \leq z \leq 1/2$. In the first case, using (\ref{The Poisson's equation: a b placeholders a simple equation}), (\ref{The Poisson's equation: basic trig inequalities}{\color{blue}c}), (\ref{The Poisson's equation: basic trig inequalities}{\color{blue}d}), (\ref{The Poisson's equation: basic trig inequalities}{\color{blue}e}) and (\ref{The Poisson's equation: basic trig inequalities}{\color{blue}h}), we conclude the first term is 
\begin{eqnarray}
\lesssim \frac{\varepsilon^4}{\left(\frac{1}{2} - z - c_1 \varepsilon\right)^2}.
\label{The Poisson's equation: int in z prime split res 3}
\end{eqnarray}
In the second case, when $ 1/2 - 2 c_1 \varepsilon \leq z \leq 1/2$, we use (\ref{The Poisson's equation: a b placeholders a simple equation}), (\ref{The Poisson's equation: basic trig inequalities}{\color{blue}c}), (\ref{The Poisson's equation: basic trig inequalities}{\color{blue}d}), (\ref{The Poisson's equation: basic trig inequalities}{\color{blue}f}), which gives 
\begin{eqnarray}
\lesssim \varepsilon^2.
\label{The Poisson's equation: int in z prime split res 4}
\end{eqnarray}
Note that in this calculation we do not use the estimate (\ref{The Poisson's equation: basic trig inequalities}{\color{blue}h}). After using (\ref{The Poisson's equation: basic trig inequalities}{\color{blue}f}), we have a logarithmic singularity in the integrand but it is integrable.

Finally, collecting the results (\ref{The Poisson's equation: int in z prime split res 1}), (\ref{The Poisson's equation: int in z prime split res 2}), (\ref{The Poisson's equation: int in z prime split res 3}) and (\ref{The Poisson's equation: int in z prime split res 4}), and carrying out an integration in $z$ from $0$ to $1/2$, one can bound the first term in (\ref{The Poisson's equation: main integral split}) as
\begin{eqnarray}
\lesssim \varepsilon^3 \norm{f}_{L^\infty(D)}^2. \nonumber
\end{eqnarray} 
A similar calculation can be performed for the second term in (\ref{The Poisson's equation: main integral split}) and the same result can be derived which then finishes the proof.
\end{proof}

\subsection{Proof of Proposition \ref{Poisson's equation: uppper bound on grad phi prop}}
To prove Proposition \ref{Poisson's equation: uppper bound on grad phi prop}, we need to obtain estimates on $\nabla_{\bs{x}} G(\bs{x}, \bs{x}^\prime)$. From (\ref{The Poisson's equation: Green's function}), we notice that the derivative of $G(\bs{x}, \bs{x}^\prime)$ with respect to $x$ can be written as
\begin{eqnarray}
\frac{\partial}{\partial x} G(\bs{x}, \bs{x}^\prime) \; = \frac{\partial |\bs{x} - \bs{x}^\prime|_\parallel}{\partial x} \cdot \left. \frac{\partial}{\partial \sigma} K(\sigma, z, z^\prime) \right|_{\sigma = |\bs{x} - \bs{x}^\prime|_\parallel} \; = \; \frac{(x - x^\prime)}{|\bs{x} - \bs{x}^\prime|_\parallel} \cdot \left. \frac{\partial}{\partial \sigma} K(\sigma, z, z^\prime) \right|_{\sigma = |\bs{x} - \bs{x}^\prime|_\parallel}, \nonumber
\end{eqnarray}
which leads to the following estimate
\begin{eqnarray}
&& \left| \frac{\partial}{\partial x} G(\bs{x}, \bs{x}^\prime) \right| \leq \left. \left(\left|  \frac{\partial}{\partial \sigma} K(\sigma, z, z^\prime) \right| \right) \right|_{\sigma = |\bs{x} - \bs{x}^\prime|_\parallel}. 
\label{The Poisson's equation: G x derivative}
\end{eqnarray}
A similar calculation for the $y$-derivative of $G(\bs{x}, \bs{x}^\prime)$ leads to
\begin{eqnarray}
&& \left| \frac{\partial}{\partial y} G(\bs{x}, \bs{x}^\prime) \right| \leq \left. \left(\left|  \frac{\partial}{\partial \sigma} K(\sigma, z, z^\prime) \right| \right) \right|_{\sigma = |\bs{x} - \bs{x}^\prime|_\parallel}, 
\label{The Poisson's equation: G y derivative}
\end{eqnarray}
while the estimate for the $z$-derivative of $G(\bs{x}, \bs{x}^\prime)$ simply is
\begin{eqnarray}
&& \left| \frac{\partial}{\partial z} G(\bs{x}, \bs{x}^\prime) \right| \leq \left. \left(\left|  \frac{\partial}{\partial z} K(\sigma, z, z^\prime) \right| \right) \right|_{\sigma = |\bs{x} - \bs{x}^\prime|_\parallel}.
\label{The Poisson's equation: G z derivative}
\end{eqnarray}
Using (\ref{The Poisson's equation: G x derivative}), (\ref{The Poisson's equation: G y derivative}) and (\ref{The Poisson's equation: G z derivative}), we conclude that
\begin{eqnarray}
&& |\nabla_{\bs{x}} G(\bs{x}, \bs{x}^\prime)| \leq \left| \frac{\partial}{\partial x} G(\bs{x}, \bs{x}^\prime) \right| + \left| \frac{\partial}{\partial y} G(\bs{x}, \bs{x}^\prime) \right| + \left| \frac{\partial}{\partial z} G(\bs{x}, \bs{x}^\prime) \right| \nonumber \\ && \leq 2 \left. \left(\left|  \frac{\partial}{\partial \sigma} K(\sigma, z, z^\prime) \right| \right) \right|_{\sigma = |\bs{x} - \bs{x}^\prime|_\parallel} + \left. \left(\left|  \frac{\partial}{\partial z} K(\sigma, z, z^\prime) \right| \right) \right|_{\sigma = |\bs{x} - \bs{x}^\prime|_\parallel} \leq  H(|\bs{x} - \bs{x}^\prime|_\parallel, z, z^\prime),
\label{The Poisson's equation: the function H and a few consequences}
\end{eqnarray} 
for some suitable $H:\mathbb{R}_{+} \times (-1/2, 1/2) \times (-1/2, 1/2) \to [0, +\infty]$. It then follows that
\begin{eqnarray}
|\nabla \varphi (\bs{x})| && \leq \norm{f}_{L^\infty(D)} \int_{-\infty}^\infty \int_{-\infty}^\infty \int_{\supp_z f} H(|\bs{x} - \bs{x}^\prime|_\parallel, z, z^\prime) \, {\rm d} z^\prime {\rm d} x^\prime {\rm d} y^\prime, \nonumber \\
&& = \norm{f}_{L^\infty(D)} \int_{-\infty}^\infty \int_{-\infty}^\infty \int_{\supp_z f} H(|\bs{x}^\prime|_\parallel, z, z^\prime) \, {\rm d} z^\prime {\rm d} x^\prime {\rm d} y^\prime. \nonumber
\end{eqnarray}
By considering a transformation from Cartesian coordinates  to cylindrical coordinates
$$(x^\prime, y^\prime, z^\prime) \mapsto (\sigma, \theta, z^\prime)$$ 
one obtains
\begin{eqnarray}
|\nabla \varphi (\bs{x})|  && \leq  \norm{f}_{L^\infty(D)} \int_{0}^\infty \int_{0}^{2 \pi} \int_{\supp_z f} \sigma H(\sigma, z, z^\prime) \, {\rm d} z^\prime {\rm d} \theta {\rm d} \sigma, \nonumber \\
&& \lesssim \norm{f}_{L^\infty(D)} \int_{0}^\infty  \int_{\supp_z f} \sigma H(\sigma, z, z^\prime) \, {\rm d} z^\prime {\rm d} \sigma. \nonumber
\end{eqnarray}
So, to prove Proposition \ref{Poisson's equation: uppper bound on grad phi prop}, we need to find an appropriate $H(\sigma, z, z^\prime)$ and then perform the integral 
\begin{eqnarray}
\int_{0}^\infty  \sigma H(\sigma, z, z^\prime) \, {\rm d} \sigma
\label{The Poisson's equation: integral of H statement},
\end{eqnarray}
which is our next goal.

To calculate (\ref{The Poisson's equation: G x derivative}), (\ref{The Poisson's equation: G y derivative}) and (\ref{The Poisson's equation: G z derivative}), we need the derivative of $K(\sigma, z, z^\prime)$ with $\sigma$ and $z$. Using (\ref{The Poisson's equation: K}) and (\ref{The Poisson's equation: I}) along with an application of the mean value theorem and the dominated convergence theorem leads to
\begin{eqnarray}
\frac{\partial K}{\partial \sigma} = \int_{1}^\infty I_{\sigma 1} \, \frac{{\rm d} \tau}{\sqrt{\tau^2 - 1}} + \int_{1}^\infty I_{\sigma 2} \, \frac{{\rm d} \tau}{\sqrt{\tau^2 - 1}} + \int_{1}^\infty I_{\sigma 3} \, \frac{{\rm d} \tau}{\sqrt{\tau^2 - 1}},
\label{The Poisson's equation: K sigma derivative}
\end{eqnarray}
and 
\begin{eqnarray}
\frac{\partial K}{\partial z} = \int_{1}^\infty I_{z 1} \, \frac{{\rm d} \tau}{\sqrt{\tau^2 - 1}} + \int_{1}^\infty I_{z 2} \, \frac{{\rm d} \tau}{\sqrt{\tau^2 - 1}} + \int_{1}^\infty I_{z 3} \, \frac{{\rm d} \tau}{\sqrt{\tau^2 - 1}},
\label{The Poisson's equation: K z derivative}
\end{eqnarray}
where
\begin{subequations}
\begin{eqnarray}
&& I_{\sigma 1} \coloneqq
\frac{\cos (\pi z) \, \cos (\pi z^\prime) \, \tau \cosh (\pi \tau \sigma )}{ 2 \left[ \cosh (\pi \tau \sigma ) + \cos \pi (z^\prime + z) \right] \left[ \cosh (\pi \tau \sigma ) - \cos \pi (z^\prime - z) \right]}, \\
&& I_{\sigma 2} \coloneqq
- \frac{\cos (\pi z) \cos (\pi z^\prime) \tau \sinh^2 (\pi \tau \sigma )}{ 2 \left[ \cosh (\pi \tau \sigma ) + \cos \pi (z^\prime + z) \right]^2 \left[ \cosh (\pi \tau \sigma ) - \cos \pi (z^\prime - z) \right]},
\\
&& I_{\sigma 3} \coloneqq
- \frac{\cos (\pi z) \cos (\pi z^\prime) \tau \sinh^2 (\pi \tau \sigma )}{ 2 \left[ \cosh (\pi \tau \sigma ) + \cos \pi (z^\prime + z) \right] \left[ \cosh (\pi \tau \sigma ) - \cos \pi (z^\prime - z) \right]^2}, \\
&& I_{z 1} =
- \frac{\sin (\pi z) \cos (\pi z^\prime) \sinh (\pi \tau \sigma )}{ 2 \left[ \cosh (\pi \tau \sigma ) + \cos \pi (z^\prime + z) \right] \left[ \cosh (\pi \tau \sigma ) - \cos \pi (z^\prime - z) \right]}, \\
&& I_{z 2} =
\frac{\cos (\pi z) \cos (\pi z^\prime) \sin \pi (z^\prime + z) \sinh (\pi \tau \sigma )}{ 2 \left[ \cosh (\pi \tau \sigma ) + \cos \pi (z^\prime + z) \right]^2 \left[ \cosh (\pi \tau \sigma ) - \cos \pi (z^\prime - z) \right]}, \\
&& I_{z 3} =
\frac{\cos (\pi z) \cos (\pi z^\prime) \sin \pi (z^\prime - z) \sinh (\pi \tau \sigma )}{ 2 \left[ \cosh (\pi \tau \sigma ) + \cos \pi (z^\prime + z) \right] \left[ \cosh (\pi \tau \sigma ) - \cos \pi (z^\prime - z) \right]^2}.
\end{eqnarray}
\label{The Poisson's equation: I sigma z derivative parts}
\end{subequations}

Next, we state a few important lemmas to bound the derivatives of $K(\sigma, z, z^\prime)$. We will always implicitly assume that $z, z^\prime \in (-1/2, 1/2)$.
\begin{lemma}
\label{Poisson's equation: bound on I derivative large sigma}
Let $\sigma \geq \frac{1}{2 \pi}$, then we have

(i)  
\begin{eqnarray}
\int_1^\infty (|I_{\sigma 1}| + |I_{\sigma 2}| + |I_{\sigma 3}|) \, \frac{{\rm d} \tau}{\sqrt{\tau^2 - 1}} \lesssim \cos(\pi z^\prime) \exp(-\pi \sigma). \nonumber
\end{eqnarray}

(ii)  
\begin{eqnarray}
\int_1^\infty (|I_{z 1}| + |I_{z 2}| + |I_{z 3}|) \, \frac{{\rm d} \tau}{\sqrt{\tau^2 - 1}} \lesssim \cos(\pi z^\prime) \exp(-\pi \sigma). \nonumber
\end{eqnarray}
\end{lemma}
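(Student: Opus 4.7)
The plan is to exploit the fact that the hypothesis $\sigma \geq 1/(2\pi)$, together with $\tau \geq 1$ in the domain of integration, forces $\pi\tau\sigma \geq 1/2$, so that $\cosh(\pi\tau\sigma) \geq \cosh(1/2) > 1$ uniformly. Since $|\cos \pi(z^\prime + z)|, |\cos \pi(z^\prime - z)| \leq 1$, this gives a uniform lower bound on both denominator factors of the form
\begin{equation}
\cosh(\pi\tau\sigma) \pm \cos \pi(z^\prime \pm z) \;\geq\; c_0 \cosh(\pi\tau\sigma), \qquad c_0 \coloneqq 1 - \tfrac{1}{\cosh(1/2)} > 0, \nonumber
\end{equation}
valid for all $\tau \geq 1$ and all admissible $z, z^\prime$. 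This is the only structural fact that will be used; everything else is a crude triangle-inequality bookkeeping exercise together with a single elementary $\tau$-integral.

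First I would apply this denominator bound, together with the trivial estimates $|\cos(\pi z)|, |\sin(\pi z)|, |\sin \pi(z^\prime \pm z)| \leq 1$ and $\sinh^2 \leq \cosh^2$, to obtain the pointwise estimates
\begin{equation}
|I_{\sigma 1}| + |I_{\sigma 2}| + |I_{\sigma 3}| \;\lesssim\; \frac{\cos(\pi z^\prime) \, \tau}{\cosh(\pi\tau\sigma)}, \qquad |I_{z 1}| + |I_{z 2}| + |I_{z 3}| \;\lesssim\; \frac{\cos(\pi z^\prime)}{\cosh(\pi\tau\sigma)}, \nonumber
\end{equation}
where I am using $\cos(\pi z^\prime) \geq 0$ on $(-1/2, 1/2)$ so that the absolute value is just $\cos(\pi z^\prime)$. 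Note that $I_{\sigma 2}$ and $I_{\sigma 3}$ look worse because of the $\sinh^2/\cosh^3$ factor, but this is bounded by $1/\cosh$, and similarly for $I_{z 2}$ and $I_{z 3}$.

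Next I would reduce both integrals to a single elementary one. Using $\cosh(\pi\tau\sigma) \geq \tfrac{1}{2} e^{\pi\tau\sigma}$ and the substitution $\tau = 1 + u$, both claimed bounds reduce to showing
\begin{equation}
\int_1^\infty \frac{\tau \, e^{-\pi\tau\sigma}}{\sqrt{\tau^2 - 1}} \, \mathrm{d}\tau \;=\; e^{-\pi\sigma} \int_0^\infty \frac{(1+u) \, e^{-\pi u \sigma}}{\sqrt{u(2+u)}} \, \mathrm{d}u \;\lesssim\; e^{-\pi\sigma} \nonumber
\end{equation}
uniformly for $\sigma \geq 1/(2\pi)$. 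The remaining $u$-integral is decreasing in $\sigma$, so the worst case is $\sigma = 1/(2\pi)$, and at that value the integrand has an integrable $u^{-1/2}$ singularity near $u = 0$ and exponential decay at infinity, hence the integral is finite. The version without the $(1+u)$ factor is bounded analogously (in fact more easily). Combining this with the pointwise estimates of the previous step yields both (i) and (ii).

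The proof presents essentially no obstacle; the only point that requires a moment of care is observing that the denominator bound needs $\cosh(\pi\tau\sigma)$ to be strictly bigger than $1$ uniformly in $\tau \geq 1$, which is precisely why the hypothesis $\sigma \geq 1/(2\pi)$ (as opposed to $\sigma \geq 0$) appears. The complementary small-$\sigma$ regime, where $\cosh(\pi\tau\sigma) - 1$ can be arbitrarily small, will need a separate and much more delicate analysis in a subsequent lemma, but that is outside the scope of the statement above.
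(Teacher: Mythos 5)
Your proof is correct and follows essentially the same approach as the paper: the shared key observation is that for $\sigma \geq 1/(2\pi)$ and $\tau \geq 1$ one has $\pi\tau\sigma \geq 1/2$, so both denominator factors $\cosh(\pi\tau\sigma)\pm\cos(\cdot)$ are bounded below by a constant multiple of $\cosh(\pi\tau\sigma)$, after which everything reduces to the elementary integral $\int_1^\infty \tau e^{-\pi\tau\sigma}(\tau^2-1)^{-1/2}\,\mathrm{d}\tau \lesssim e^{-\pi\sigma}$. The only inessential mechanical differences are that the paper splits the $\tau$-integral at $\tau=2$ while you substitute $\tau=1+u$ and invoke monotonicity in $\sigma$; as an aside, your constant $c_0 = 1-1/\cosh(1/2)$ is sharp, whereas the paper's factor $1/8$ in the analogous bound $\cosh(\pi\tau\sigma)-1\geq \cosh(\pi\tau\sigma)/8$ is in fact slightly too large at the endpoint $\tau=1,\ \sigma=1/(2\pi)$ (since $\cosh(1/2)<8/7$), though this is a harmless constant slip.
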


\begin{lemma}
\label{Poisson's equation: bound on I derivative small sigma large tau}
Let $0 < \sigma < \frac{1}{2 \pi}$, then we have

(i)  
\begin{eqnarray}
\int_{\frac{1}{\pi \sigma}}^\infty (|I_{\sigma 1}| + |I_{\sigma 2}| + |I_{\sigma 3}|) \, \frac{{\rm d} \tau}{\sqrt{\tau^2 - 1}} \lesssim \frac{\cos(\pi z^\prime)}{\sigma}.  \nonumber
\end{eqnarray}

(ii)  
\begin{eqnarray}
\int_{\frac{1}{\pi \sigma}}^\infty (|I_{z 1}| + |I_{z 2}| + |I_{z 3}|) \, \frac{{\rm d} \tau}{\sqrt{\tau^2 - 1}} \lesssim \cos(\pi z^\prime). \nonumber
\end{eqnarray}

\end{lemma}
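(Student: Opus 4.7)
The plan is to exploit the fact that on the integration range $\tau \geq 1/(\pi\sigma)$ the product $\pi\tau\sigma$ is bounded below by $1$, so the hyperbolic terms in the denominators of $I_{\sigma,i}$ and $I_{z,i}$ dominate the trigonometric ones uniformly. First I would establish pointwise bounds of the form
\begin{equation*}
|I_{\sigma,i}| \lesssim \frac{\tau \cos(\pi z')}{\cosh(\pi\tau\sigma)}, \qquad |I_{z,i}| \lesssim \frac{\cos(\pi z')}{\cosh(\pi\tau\sigma)}, \qquad i \in \{1,2,3\},
\end{equation*}
by combining the elementary inequalities $\cosh(\pi\tau\sigma) \pm \cos\pi(z \pm z') \geq c_0\, \cosh(\pi\tau\sigma)$ (valid with $c_0 = 1 - 1/\cosh(1) > 0$ once $\pi\tau\sigma \geq 1$), $|\sinh(\pi\tau\sigma)| \leq \cosh(\pi\tau\sigma)$, and $|\cos\pi z|, |\sin\pi z|, |\sin\pi(z\pm z')| \leq 1$. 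This is the mirror image of the argument for Lemma \ref{Poisson's equation: bound on I derivative large sigma} in the complementary regime of $(\sigma,\tau)$.

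Next I would perform the change of variables $u = \pi\tau\sigma$, which turns $\int_{1/(\pi\sigma)}^\infty (\,\cdot\,)/\sqrt{\tau^2-1}\,{\rm d}\tau$ into $\int_1^\infty (\,\cdot\,)/\sqrt{u^2-(\pi\sigma)^2}\,{\rm d}u$. Because the hypothesis $\sigma < 1/(2\pi)$ gives $(\pi\sigma)^2 < 1/4$, the radical satisfies $\sqrt{u^2-(\pi\sigma)^2} \geq \sqrt{3}/2$ for every $u \geq 1$, so the singularity at $\tau = 1$ is well outside the integration range and causes no trouble. Using $1/\cosh(u) \leq 2 e^{-u}$, the integrand in case (ii) is then dominated by a constant multiple of $\cos(\pi z')\, e^{-u}$, whose integral is $O(1)$ and yields the $O(\cos(\pi z'))$ estimate. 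In case (i) the extra $\tau$ from $|I_{\sigma,i}|$ becomes $u/(\pi\sigma)$ after substitution, and the resulting $\frac{1}{\pi\sigma}\int_1^\infty u e^{-u}\,{\rm d}u < \infty$ supplies the $1/\sigma$ factor claimed.

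The proof is largely mechanical; the only thing that requires a moment's care is checking that the constant $c_0 > 0$ in the denominator lower bound and the floor $\sqrt{u^2 - (\pi\sigma)^2} \geq \sqrt{3}/2$ are genuinely uniform in all the parameters $z, z' \in (-1/2,1/2)$, $\tau \geq 1/(\pi\sigma)$, and $\sigma \in (0, 1/(2\pi))$, which is immediate from the cheap bounds above. I note in passing that the estimates for $I_{z,2}$ and $I_{z,3}$ in fact decay like $\cosh(\pi\tau\sigma)^{-2}$ thanks to the squared factors in their denominators, but the weaker uniform $\cosh(\pi\tau\sigma)^{-1}$ bound is already sufficient for the stated result.
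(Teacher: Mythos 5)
Your proposal is correct, and it is essentially the argument the paper has in mind: the paper only states that the proof ``is similar to the proof of Lemma~\ref{Poisson's equation: bound on I derivative large sigma},'' i.e.\ use the uniform bound $\cosh(\pi\tau\sigma)\pm\cos(\cdot)\gtrsim\cosh(\pi\tau\sigma)$ valid once $\pi\tau\sigma\geq 1$ to reduce each $|I_{\sigma i}|$ to $\cos(\pi z^\prime)\,\tau/\cosh(\pi\tau\sigma)$ and each $|I_{zi}|$ to $\cos(\pi z^\prime)/\cosh(\pi\tau\sigma)$, and then integrate, using that the lower limit $\tau=1/(\pi\sigma)>2$ keeps the $\sqrt{\tau^2-1}$ singularity out of the range. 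Your bookkeeping of the constant $c_0=1-1/\cosh(1)$, the substitution $u=\pi\tau\sigma$ (which contributes the $1/\sigma$ factor in part (i) through $\tau=u/(\pi\sigma)$), and the floor $\sqrt{u^2-(\pi\sigma)^2}\geq\sqrt{3}/2$ all check out; the change of variables is a cosmetic choice relative to bounding $\int_{1/(\pi\sigma)}^{\infty}\tau e^{-\pi\tau\sigma}\,d\tau/\sqrt{\tau^2-1}$ directly, but the substance is the same.
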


\begin{lemma}
\label{Poisson's equation: bound on I derivative small sigma small tau}
Let $0 < \sigma < \frac{1}{2 \pi}$ and $z \neq z^\prime$, then we have

(i)  

\begin{eqnarray}
\int_{1}^{\frac{1}{\pi \sigma}} (|I_{\sigma 1}| + |I_{\sigma 2}| + |I_{\sigma 3}|) \, \frac{{\rm d} \tau}{\sqrt{\tau^2 - 1}} 
\lesssim \frac{1}{\sigma} \left[\frac{1}{\sqrt{\sigma^2 + a^2}} - \frac{1}{\sqrt{\sigma^2 + b^2}}\right]. && \nonumber
\end{eqnarray}

(ii)  

\begin{eqnarray}
\int_{1}^{\frac{1}{\pi \sigma}} |I_{z 1}| \, \frac{{\rm d} \tau}{\sqrt{\tau^2 - 1}} \lesssim |\tan \pi z| \left[\frac{1}{\sqrt{\sigma^2 + a^2}} - \frac{1}{\sqrt{\sigma^2 + b^2}}\right]. \nonumber
\end{eqnarray}

(iii)  

\begin{eqnarray}
\int_{1}^{\frac{1}{\pi \sigma}} |I_{z 2} + I_{z 3}| \, \frac{{\rm d} \tau}{\sqrt{\tau^2 - 1}} \lesssim \cos^2 \pi z \cos \pi z^\prime \left[P_1(\sigma, z, z^\prime) + P_2(\sigma, z, z^\prime)\right]. \nonumber
\end{eqnarray}
Here,
\begin{subequations}
\begin{eqnarray}
P_1(\sigma, z, z^\prime) \coloneqq \frac{2 a b \pi}{4 (b^2-a^2)^3} \left[\frac{4}{\sqrt{\sigma^2 + b^2}} - \frac{4}{\sqrt{\sigma^2 + a^2}} + (b^2-a^2) \left( \frac{1}{(\sigma^2 + a^2)^{3/2}}  + \frac{1}{(\sigma^2 + b^2)^{3/2}} \right)\right], \nonumber \\ 
P_2(\sigma, z, z^\prime) \coloneqq \frac{\pi^3}{4 (b^2-a^2)^3} \left[\frac{4 b^2}{\sqrt{\sigma^2 + b^2}} - \frac{4 a^2}{\sqrt{\sigma^2 + a^2}} + (b^2-a^2) \left( \frac{2\sigma^2 + a^2}{(\sigma^2 + a^2)^{3/2}}  + \frac{2 \sigma^2 + b^2}{(\sigma^2 + b^2)^{3/2}} \right)\right]. \nonumber 
\end{eqnarray} 
\end{subequations}
\end{lemma}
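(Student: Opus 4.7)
The plan is to reduce all three bounds to one-dimensional integrals that are then handled by partial fractions. The key algebraic identities, obtained from $\cosh x = 1+2\sinh^2(x/2)$ and $\cos y = 1-2\sin^2(y/2)$, are
\[
\cosh(\pi\tau\sigma) - \cos\pi(z'-z) = 2\sinh^2\!\bigl(\tfrac{\pi\tau\sigma}{2}\bigr) + \tfrac{\pi^2}{2}\,a^2, \qquad
\cosh(\pi\tau\sigma) + \cos\pi(z'+z) = 2\sinh^2\!\bigl(\tfrac{\pi\tau\sigma}{2}\bigr) + \tfrac{\pi^2}{2}\,b^2.
\]
On the range $\tau\in[1,1/(\pi\sigma)]$, we have $\pi\tau\sigma\in[\pi\sigma,1]$, a bounded interval on which $\sinh(y)\asymp y$ and $\cosh(y)\asymp 1$, so the two expressions above are comparable (up to universal constants) to $\tau^2\sigma^2+a^2$ and $\tau^2\sigma^2+b^2$ respectively. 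The successive change of variables $v=\tau\sigma$, then $w=v^2-\sigma^2$, converts every integral of the form $\int_1^{1/(\pi\sigma)}\tau\sigma\,f(\tau^2\sigma^2)\,d\tau/\sqrt{\tau^2-1}$ into $\tfrac12\int_0^{1/\pi^2-\sigma^2} f(w+\sigma^2)\,dw/\sqrt{w}$, which I extend to $[0,\infty)$ and evaluate via $\int_0^\infty dw/(\sqrt{w}(w+C))=\pi/\sqrt{C}$ and $\int_0^\infty dw/(\sqrt{w}(w+C)^2)=\pi/(2C^{3/2})$.

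For part (i), I would bound each $|I_{\sigma k}|$ uniformly by $\tfrac{|\cos\pi z\cos\pi z'|\,\tau}{(\tau^2\sigma^2+a^2)(\tau^2\sigma^2+b^2)}$ (the extra $\sinh^2$ factors in $I_{\sigma 2},I_{\sigma 3}$ being absorbed by one of their denominators since $\sinh^2(\pi\tau\sigma/2)\lesssim \tau^2\sigma^2+a^2$). The $(v,w)$-substitution then reduces the integral to $\tfrac{1}{2\sigma}\int_0^\infty dw/[\sqrt{w}(w+A)(w+B)]$ with $A=\sigma^2+a^2$, $B=\sigma^2+b^2$, which partial fractions evaluate to $\tfrac{\pi}{2\sigma(b^2-a^2)}[1/\sqrt{\sigma^2+a^2}-1/\sqrt{\sigma^2+b^2}]$; the factor $1/(b^2-a^2)$ is consumed by $|\cos\pi z\cos\pi z'|\lesssim b^2-a^2$, which is an immediate consequence of equation~(\ref{The Poisson's equation: a b placeholders a simple equation}). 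Part (ii) is analogous: the numerator of $I_{z1}$ carries $\sinh(\pi\tau\sigma)\asymp\pi\tau\sigma$ in place of $\tau\cosh(\pi\tau\sigma)$, contributing an extra factor of $\sigma$ that cancels the $1/\sigma$ from the substitution, and $|\sin\pi z\cos\pi z'|=|\tan\pi z|\cdot|\cos\pi z\cos\pi z'|\lesssim|\tan\pi z|\,(b^2-a^2)$ supplies the $|\tan\pi z|$.

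The main obstacle is part (iii), where a direct termwise bound $|I_{z2}|+|I_{z3}|$ only yields a prefactor of $|\cos\pi z\cos\pi z'|$, whereas the claim requires $\cos^2\pi z\cos\pi z'$. The essential trick is to combine the two terms algebraically \emph{before} taking absolute values. Writing $c_1=\cosh(\pi\tau\sigma)+\cos\pi(z'+z)$ and $c_2=\cosh(\pi\tau\sigma)-\cos\pi(z'-z)$, the identities $\sin\pi(z'+z)+\sin\pi(z'-z)=2\sin\pi z'\cos\pi z$ and $\sin\pi(z'-z)\cos\pi(z'+z)-\sin\pi(z'+z)\cos\pi(z'-z)=-\sin 2\pi z$ together give
\[
I_{z2}+I_{z3}=\frac{\cos^2\pi z\,\cos\pi z'\,\sinh(\pi\tau\sigma)\,\bigl[\cosh(\pi\tau\sigma)\sin\pi z'-\sin\pi z\bigr]}{c_1^2\,c_2^2},
\]
from which the extra $\cos\pi z$ emerges automatically. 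I then split the bracket as $(\cosh(\pi\tau\sigma)-1)\sin\pi z'+(\sin\pi z'-\sin\pi z)$, using $\cosh(\pi\tau\sigma)-1=2\sinh^2(\pi\tau\sigma/2)\lesssim \tau^2\sigma^2$ on the relevant range together with the exact identity $|\sin\pi z'-\sin\pi z|=2|\cos(\tfrac{\pi(z+z')}{2})\sin(\tfrac{\pi(z-z')}{2})|=\tfrac{\pi^2}{2}\,ab$ and the trivial bound $|\sin\pi z'|\le 1$.

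The two summands are then integrated separately. The $ab$-piece produces the integral $\int_0^\infty dw/[\sqrt{w}(w+A)^2(w+B)^2]$; expanding $1/[(w+A)^2(w+B)^2]$ into simple and double poles at $-A,-B$ and integrating term by term gives $\tfrac{2\pi}{(b^2-a^2)^3}[1/\sqrt{\sigma^2+b^2}-1/\sqrt{\sigma^2+a^2}]+\tfrac{\pi}{2(b^2-a^2)^2}[(\sigma^2+a^2)^{-3/2}+(\sigma^2+b^2)^{-3/2}]$, which, multiplied by $ab$, is precisely $P_1$. The $\tau^2\sigma^2$-piece produces $\int_0^\infty (w+\sigma^2)\,dw/[\sqrt{w}(w+A)^2(w+B)^2]$, which is handled by writing $w+\sigma^2=\tfrac12[(w+A)+(w+B)]-\tfrac12(a^2+b^2)$ and evaluating each of the resulting $1/[(w+A)(w+B)^2]$-, $1/[(w+A)^2(w+B)]$- and $1/[(w+A)^2(w+B)^2]$-integrals by the same partial-fraction procedure; the combination reassembles into $P_2$. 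The chief technical difficulty is this final algebraic bookkeeping—verifying that the rational combinations coming out of the partial-fraction expansions line up exactly with the forms $\tfrac{2ab\pi}{4(b^2-a^2)^3}[\cdots]$ and $\tfrac{\pi^3}{4(b^2-a^2)^3}[\cdots]$ stated in the lemma.
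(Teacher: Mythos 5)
Your approach is essentially the paper's, step by step: on $\tau\in[1,1/\pi\sigma]$ you use the same bounds $\cosh(\pi\tau\sigma)-1\lesssim(\sigma\tau)^2$ and $\sinh(\pi\tau\sigma)\lesssim\sigma\tau$ to bound $I_{\sigma k}$ and $I_{z1}$ termwise; the same algebraic combination of $I_{z2}+I_{z3}$ before taking absolute values to extract $\cos^2\pi z$; the same split of the bracket into a $(\cosh(\pi\tau\sigma)-1)\sin\pi z'$ piece and a $\sin\pi z'-\sin\pi z=\tfrac{\pi^2}{2}ab$ piece; and the same pair of partial-fraction integrals. Your $(v,w)$-change of variables is simply the route to the two integral formulas the paper quotes directly.

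One caveat on the ``chief technical difficulty'' you flag. The $\tau^2\sigma^2$-piece does \emph{not} reassemble into $P_2$. Carrying the partial fractions through, the exact value is
\begin{eqnarray}
\int_{1}^{\infty}\frac{\sigma^3\tau^3\,{\rm d}\tau}{\left[\sigma^2\tau^2+a^2\right]^2\left[\sigma^2\tau^2+b^2\right]^2\sqrt{\tau^2-1}}
=\frac{\pi(a^2+b^2)}{2(b^2-a^2)^3}\left[\frac{1}{\sqrt{\sigma^2+a^2}}-\frac{1}{\sqrt{\sigma^2+b^2}}\right] \nonumber \\
{}-\frac{\pi}{4(b^2-a^2)^2}\left[\frac{a^2}{(\sigma^2+a^2)^{3/2}}+\frac{b^2}{(\sigma^2+b^2)^{3/2}}\right], \nonumber
\end{eqnarray}
which is \emph{smaller} than $P_2/\pi^2$; the difference equals $\frac{2\pi}{(b^2-a^2)^3}\left[\frac{b^2}{\sqrt{\sigma^2+b^2}}-\frac{a^2}{\sqrt{\sigma^2+a^2}}\right]\geq 0$, nonnegative because $t\mapsto t^2/\sqrt{\sigma^2+t^2}$ is increasing. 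Thus $P_2$ is a valid majorant and the lemma's $\lesssim$-bound holds, but the bookkeeping will not ``line up exactly'' as you anticipate. (The paper's quoted formula for the $\sigma^3\tau^3$-integral carries the same discrepancy; only the companion $\sigma\tau$-integral formula is an exact identity.)
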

\begin{proof}[Proof of Proposition \ref{Poisson's equation: uppper bound on grad phi prop}]
Using the results from the above lemmas, a suitable function $H(\sigma, z, z^\prime)$ that works in (\ref{The Poisson's equation: the function H and a few consequences}) is 
\begin{eqnarray}
H(\sigma, z, z^\prime) \coloneqq c \left( \frac{1}{\sigma} \left[\frac{1}{\sqrt{\sigma^2 + a^2}} - \frac{1}{\sqrt{\sigma^2 + b^2}}\right] + |\tan \pi z| \left[\frac{1}{\sqrt{\sigma^2 + a^2}} - \frac{1}{\sqrt{\sigma^2 + b^2}}\right] \right. \nonumber \\ \left. + \cos^2 \pi z \cos \pi z^\prime \left[P_1(\sigma, z, z^\prime) + P_2(\sigma, z, z^\prime)\right] + \frac{\cos \pi z^\prime }{\sigma} \right), \nonumber
\end{eqnarray}
when $(\sigma, z, z^\prime) \in (0, 1/2\pi) \times (-1/2, 1/2) \times (-1/2, 1/2)$ and
\begin{eqnarray}
H(\sigma, z, z^\prime) \coloneqq c\cos \pi z^\prime \exp(-\pi \sigma), \nonumber
\end{eqnarray}
when $(\sigma, z, z^\prime) \in [1/2\pi, \infty) \times (-1/2, 1/2) \times (-1/2, 1/2)$. Here, $c > 0$ is some positive constant. With this definition of the function $H$ and Lemma \ref{Poisson's equation: a few important basic integrals} from appendix \ref{Appendix: Bounds on a few integrals}, we can obtain a bound on the integral (\ref{The Poisson's equation: integral of H statement}) as 
\begin{eqnarray}
\int_{0}^\infty  \sigma H(\sigma, z, z^\prime) \, {\rm d} \sigma && \lesssim  \log \left(1 + \frac{4 (b^2-a^2)}{3 a^2}\right) + \frac{\cos \pi z^\prime}{b} +  \frac{\cos^2 \pi z \cos \pi z^\prime}{b^3} + \cos \pi z^\prime \nonumber \\
&& \lesssim \log \left(1 + \frac{(b^2-a^2)}{a^2}\right) +  \frac{\cos \pi z^\prime}{b}.
\label{The Poisson's equation: integral of H bound}
\end{eqnarray}
Here, we used (\ref{The Poisson's equation: basic trig inequalities}{\color{blue}a}) to obtain the last line.
\end{proof}

\begin{proof}[Proof of Lemma \ref{Poisson's equation: bound on I derivative large sigma}]
We first note that 
\begin{eqnarray}
\int_1^\infty (|I_{\sigma 1}| + |I_{\sigma 2}| + |I_{\sigma 3}|) \, \frac{{\rm d} \tau}{\sqrt{\tau^2 - 1}} \leq \int_1^2 (|I_{\sigma 1}| + |I_{\sigma 2}| + |I_{\sigma 3}|) \, \frac{{\rm d} \tau}{\sqrt{\tau^2 - 1}} \qquad \qquad \qquad \nonumber \\  + 2 \int_2^\infty (|I_{\sigma 1}| + |I_{\sigma 2}| + |I_{\sigma 3}|) \, \frac{{\rm d} \tau}{\tau}.
\label{Poisson's equation: bound on I sigma sum simplified}
\end{eqnarray}
We also have
\begin{eqnarray}
\int_1^\infty (|I_{z 1}| + |I_{z 2}| + |I_{z 3}|) \, \frac{{\rm d} \tau}{\sqrt{\tau^2 - 1}} \leq \int_1^2 (|I_{z 1}| + |I_{z 2}| + |I_{z 3}|) \, \frac{{\rm d} \tau}{\sqrt{\tau^2 - 1}} \qquad \qquad \qquad \nonumber \\ + \int_2^\infty (|I_{z 1}| + |I_{z 2}| + |I_{z 3}|) \, {\rm d} \tau. 
\label{Poisson's equation: bound on I z sum simplified}
\end{eqnarray}
Now the assumption in the lemma is $\sigma \geq 1/2 \pi$. So, if $\tau \geq 1$, then 
$$\cosh(\pi \tau \sigma) - 1 \geq \frac{\cosh(\pi \tau \sigma)}{8},$$
and we always have 
$$\sinh(\pi \tau \sigma) \leq \cosh(\pi \tau \sigma) \quad \text{and} \quad \frac{\exp(\pi \tau \sigma)}{2} \leq \cosh(\pi \tau \sigma).$$
Using these relations in (\ref{The Poisson's equation: I sigma z derivative parts}{\color{blue}a-f}), one can show
\begin{eqnarray}
|I_{\sigma 1}| \lesssim \cos(\pi z^\prime) \tau \exp(-\pi \tau \sigma), \quad |I_{\sigma 2}| \lesssim \cos(\pi z^\prime) \tau \exp(-\pi \tau \sigma), \quad |I_{\sigma 3}| \lesssim \cos(\pi z^\prime) \tau \exp(-\pi \tau \sigma), \nonumber \\
|I_{z 1}| \lesssim \cos(\pi z^\prime) \exp(-\pi \tau \sigma), \quad |I_{z 2}| \lesssim \cos(\pi z^\prime) \exp(-2 \pi \tau \sigma), \quad |I_{z 3}| \lesssim \cos(\pi z^\prime) \exp(-2 \pi \tau \sigma). \nonumber
\end{eqnarray}
In total, we obtain
\begin{subequations}
\begin{eqnarray}
&& |I_{\sigma 1}| + |I_{\sigma 2}| + |I_{\sigma 3}| \lesssim \cos(\pi z^\prime) \tau \exp(-\pi \tau \sigma), \\
&& |I_{z 1}| + |I_{z 2}| + |I_{z 3}| \lesssim \cos(\pi z^\prime) \exp(-\pi \tau \sigma).
\end{eqnarray}
\label{Poisson's equation: bound on I sum bound}
\end{subequations}
Next, we substitute (\ref{Poisson's equation: bound on I sum bound}{\color{blue}a}) in (\ref{Poisson's equation: bound on I sigma sum simplified}) and (\ref{Poisson's equation: bound on I sum bound}{\color{blue}b}) in (\ref{Poisson's equation: bound on I z sum simplified}). We also use the fact $\exp(-\pi \tau \sigma) \leq \exp(-\pi \sigma)$ for the integrals carried from $\tau = 1$ to $\tau = 2$ in (\ref{Poisson's equation: bound on I sigma sum simplified}) and (\ref{Poisson's equation: bound on I z sum simplified}), which leads to
\begin{eqnarray}
\int_1^\infty (|I_{\sigma 1}| + |I_{\sigma 2}| + |I_{\sigma 3}|) \, \frac{{\rm d} \tau}{\sqrt{\tau^2 - 1}} && \lesssim \cos(\pi z^\prime) \exp(-\pi \sigma) + \cos(\pi z^\prime) \frac{\exp(-2\pi\sigma)}{\pi \sigma} \nonumber \\
&& \lesssim \cos(\pi z^\prime) \exp(-\pi \sigma),
\label{Poisson's equation: bound on I sigma sum lemma proved}
\end{eqnarray}
and 
\begin{eqnarray}
\int_1^\infty (|I_{z 1}| + |I_{z 2}| + |I_{z 3}|) \, \frac{{\rm d} \tau}{\sqrt{\tau^2 - 1}} && \lesssim \cos(\pi z^\prime) \exp(-\pi \sigma) + \cos(\pi z^\prime) \frac{\exp(-2\pi\sigma)}{\pi \sigma} \nonumber \\
&& \lesssim \cos(\pi z^\prime) \exp(-\pi \sigma).
\label{Poisson's equation: bound on I z sum lemma proved}
\end{eqnarray}
\end{proof}

\begin{proof}[Proof of Lemma \ref{Poisson's equation: bound on I derivative small sigma large tau}]
The proof is similar to the proof of lemma \ref{Poisson's equation: bound on I derivative large sigma}.
\end{proof}

\begin{proof}[Proof of Lemma \ref{Poisson's equation: bound on I derivative small sigma small tau}]
First, we establish a few simple relations. The assumption in the lemma is $\sigma < \frac{1}{2 \pi}$. So, if $1 \leq \tau \leq \frac{1}{\pi \sigma}$, then
$$\sinh(\pi \sigma \tau) \leq (\pi \sigma \tau) \sinh(1), \quad \cosh(\pi \sigma \tau) \leq \cosh(1),  \quad \text{and} \quad \cosh(\pi \sigma \tau) \leq 1 + (\cosh(1)-1) \pi^2 \sigma^2 \tau^2,$$
and we always have 
$$\cosh(\pi \sigma \tau) \geq 1 + \frac{(\pi \sigma \tau)^2}{2}.$$

(i) We can then use the relations above to derive a simple bound on $|I_{\sigma 1}|$:
\begin{eqnarray}
|I_{\sigma 1}| \leq I_{\sigma 1}^b \coloneqq
\frac{2 \cosh(1) \cos (\pi z) \, \cos (\pi z^\prime) \, \tau }{ \pi^4 \left[ \sigma^2 \tau^2 + a^2 \right] \left[ \sigma^2 \tau^2 + b^2 \right]}. \nonumber
\end{eqnarray}
We can also obtain a simple bound on $|I_{\sigma 2}|$ as follows
\begin{eqnarray}
|I_{\sigma 2}| && \leq
\frac{4 \sinh^2(1) \cos (\pi z) \, \cos (\pi z^\prime) \,  \sigma^2 \tau^3 }{ \pi^4 \left[ \sigma^2 \tau^2 + a^2 \right]^2 \left[ \sigma^2 \tau^2 + b^2 \right]} \nonumber \\ && \leq \frac{4 \sinh^2(1) \cos (\pi z) \, \cos (\pi z^\prime) \,  \tau }{ \pi^4 \left[ \sigma^2 \tau^2 + a^2 \right] \left[ \sigma^2 \tau^2 + b^2 \right]} = \frac{2 \sinh^2(1)}{\cosh(1)} I_{\sigma 1}^b. \nonumber
\end{eqnarray}
With a similar calculation, we prove that the same bound also holds for $|I_{\sigma 3}|$. We can now finish the proof as given below
\begin{eqnarray}
\int_1^{\frac{1}{\pi \sigma}} |I_{\sigma 1}| + |I_{\sigma 2}| + |I_{\sigma 3}| \, \frac{{\rm d} \tau}{\sqrt{\tau^2 - 1}} \lesssim \int_1^\infty |I_{\sigma 1}^b| \, \frac{{\rm d} \tau}{\sqrt{\tau^2 - 1}} \lesssim \frac{1}{\sigma} \left[\frac{1}{\sqrt{\sigma^2 + a^2}}   -   \frac{1}{\sqrt{\sigma^2 + b^2}}\right]. \nonumber
\end{eqnarray}

(ii) We can obtain a following simple bound on $|I_{z 1}|$ as
\begin{eqnarray}
|I_{z 1}| \leq \frac{2 \pi \sigma \sinh(1) |\sin \pi z| \cos \pi z^\prime \tau }{\pi^4 \left[ \sigma^2 \tau^2 + a^2 \right] \left[ \sigma^2 \tau^2 + b^2 \right]} = (\pi \sigma) \tanh(1) |\tan(\pi z)| I_{\sigma 1}^b.  \nonumber
\end{eqnarray}
Performing an integration in $\tau$ as in part (i) leads to the desired result.

(iii) We first obtain a simple bound on the sum $I_{z 2} + I_{z 3}$ given as follows
\begin{eqnarray}
&& I_{z 2} + I_{z 3} =
\frac{\cos^2 (\pi z) \cos (\pi z^\prime) \sinh (\pi \tau \sigma ) \left[\sin \pi z^\prime \cosh(\pi \sigma \tau) - \sin \pi z \right]}{ \left[ \cosh (\pi \tau \sigma ) + \cos \pi (z^\prime + z) \right]^2 \left[ \cosh (\pi \tau \sigma ) - \cos \pi (z^\prime - z) \right]^2}, \nonumber \\
\implies && |I_{z 2} + I_{z 3}| \leq \frac{16}{\pi^7} \cos^2 (\pi z) \cos (\pi z^\prime)
\frac{ \sigma \tau \left[|\sin \pi z^\prime - \sin \pi z \right| + \pi^2 \sigma^2 \tau^2 ]}{ \left[ \sigma^2 \tau^2 + a^2 \right]^2 \left[ \sigma^2 \tau^2 + b^2 \right]^2}. \nonumber
\end{eqnarray}
This result, combined with the following integrals
\begin{subequations}
\begin{eqnarray}
&& \int_{1}^{\infty} \frac{\sigma \tau}{ \left[ \sigma^2 \tau^2 + a^2 \right]^2 \left[ \sigma^2 \tau^2 + b^2 \right]^2} \, \frac{{\rm d} \tau}{\sqrt{\tau^2 - 1}} = \nonumber \\ && \frac{\pi}{4 (b^2-a^2)^3} \left[\frac{4}{\sqrt{\sigma^2 + b^2}} - \frac{4}{\sqrt{\sigma^2 + a^2}} + (b^2-a^2) \left( \frac{1}{(\sigma^2 + a^2)^{3/2}}  + \frac{1}{(\sigma^2 + b^2)^{3/2}} \right)\right], \nonumber \\
&& \int_{1}^{\infty} \frac{\sigma^3 \tau^3}{ \left[ \sigma^2 \tau^2 + a^2 \right]^2 \left[ \sigma^2 \tau^2 + b^2 \right]^2} \, \frac{{\rm d} \tau}{\sqrt{\tau^2 - 1}} = \nonumber \\ && \frac{\pi}{4 (b^2-a^2)^3} \left[\frac{4 b^2}{\sqrt{\sigma^2 + b^2}} - \frac{4 a^2}{\sqrt{\sigma^2 + a^2}} + (b^2-a^2) \left( \frac{2\sigma^2 + a^2}{(\sigma^2 + a^2)^{3/2}}  + \frac{2 \sigma^2 + b^2}{(\sigma^2 + b^2)^{3/2}} \right)\right], \nonumber
\end{eqnarray}
\end{subequations}
leads to the desired result.
\end{proof}

\section{Discussion}
\label{Discussion}
In this paper, we studied the problem of optimizing the heat transfer between two differentially heated parallel plates by incompressible flows that satisfy an enstrophy constraint ($\langle |\nabla \bs{u}|^2 \rangle \leq \mathscr{P}$) and no-slip boundary conditions. The main result of this paper was to show that the previously derived upper bound on the heat transfer are sharp in the scaling with $\mathscr{P}$, which we demonstrated by constructing an explicit example of three-dimensional branching pipe flows. In this section, we discuss the implications of our result in the context of (1) anomalous dissipation in a passive scalar and (2) Rayleigh--B\'enard convection. 
\subsection{Anomalous dissipation in a passive scalar}
The initial motivation for our study was a result by Drivas \textit{et al.} (\cite{drivas22anomdissp}), regarding the anomalous dissipation in a passive scalar transport. They constructed a velocity field $\bs{u} \in C^\infty([0, \tau) \times \mathbb{T}^d) \cap L^1([0, \tau]; C^\alpha(\mathbb{T}^d))$, where $d \geq 2$, $\tau$ is a fixed time and $\alpha < 1$, such that the solution of the advection-diffusion equation $$\partial_t T^\kappa + \bs{u} \cdot \nabla T^\kappa = \kappa \Delta T^\kappa$$
follows
\begin{eqnarray}
\liminf_{\kappa \to 0} \kappa \int_{0}^{\tau} \int_{\mathbb{T}^d} |\nabla T^\kappa|^2 \, d \bs{x} \; d t  \geq \chi > 0, \nonumber
\end{eqnarray}
where $\chi$ may depend on the initial data. While this result was obtained for a periodic domain, we were inspired by the possibility of proving such a result in a domain with boundaries. After appropriately rescaling the velocity fields that we created to prove Theorem \ref{Main theorem steady case}, we can state a weak result in this direction.
\begin{corollary}
For a constant $\kappa_0 > 0$, there exist velocity fields $\bs{u}^\kappa$, for every $0 < \kappa < \kappa_0$, such that $\norm{\bs{u}^\kappa}_{H^1_0(\Omega)} \leq 1$ and the solution of the steady advection diffusion equation: $\bs{u}^\kappa \cdot \nabla T^\kappa = \kappa \Delta T^\kappa$ in $\Omega$ with boundary conditions $T^\kappa=1$ at $z=-1/2$ and $T^\kappa=0$ at $z=1/2$ obeys 
\begin{eqnarray}
\liminf_{\kappa \to 0} \kappa^{2/3} \dashint_{\Omega} |\nabla T^\kappa|^2 \, d \bs{x}  \geq \chi_0 > 0.
\label{anom dissp}
\end{eqnarray}
for a constant $\chi_0$.
\label{anom dissp miss}
\end{corollary}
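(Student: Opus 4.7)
The plan is to reduce Corollary \ref{anom dissp miss} to Theorem \ref{Main theorem steady case} by a one-parameter rescaling. Given $0 < \kappa < \kappa_0$ with threshold $\kappa_0 \coloneqq (l_x l_y \, \mathscr{P}_0)^{-1/2}$, where $\mathscr{P}_0$ is the constant appearing in Theorem \ref{Main theorem steady case}, set the nondimensional enstrophy level
\[
\mathscr{P}(\kappa) \coloneqq \frac{1}{l_x l_y \, \kappa^2} \;\geq\; \mathscr{P}_0.
\]
Applying Theorem \ref{Main theorem steady case} at this enstrophy value produces a divergence-free velocity $\hat{\bs{u}} \in \mathcal{A}^s$ obeying no-slip and $\dashint_\Omega |\nabla \hat{\bs{u}}|^2 \, d\bs{x} \leq \mathscr{P}(\kappa)$, whose associated temperature $T$ solving the nondimensional steady system (\ref{steady advec-diff}) satisfies
\[
\dashint_\Omega |\nabla T|^2 \, d\bs{x} \;\geq\; C \, \mathscr{P}(\kappa)^{1/3}.
\]

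Next, define $\bs{u}^\kappa \coloneqq \kappa \hat{\bs{u}}$ and $T^\kappa \coloneqq T$. Multiplying the nondimensional equation $\hat{\bs{u}} \cdot \nabla T - \Delta T = 0$ by $\kappa$ shows that $T^\kappa$ solves $\bs{u}^\kappa \cdot \nabla T^\kappa - \kappa \Delta T^\kappa = 0$ with the prescribed Dirichlet data at $z = \pm 1/2$, while the divergence-free condition and no-slip boundary condition survive the scalar multiplication. The enstrophy rescales as
\[
\int_\Omega |\nabla \bs{u}^\kappa|^2 \, d\bs{x} \;=\; l_x l_y \, \kappa^2 \dashint_\Omega |\nabla \hat{\bs{u}}|^2 \, d\bs{x} \;\leq\; l_x l_y \, \kappa^2 \, \mathscr{P}(\kappa) \;=\; 1,
\]
so, interpreting $\norm{\bs{v}}_{H_0^1(\Omega)}$ as the equivalent Dirichlet norm $\norm{\nabla \bs{v}}_{L^2(\Omega)}$ via Poincaré, the constraint $\norm{\bs{u}^\kappa}_{H^1_0(\Omega)} \leq 1$ holds. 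The two scaling powers then combine cleanly:
\[
\kappa^{2/3} \dashint_\Omega |\nabla T^\kappa|^2 \, d\bs{x} \;\geq\; C \, \kappa^{2/3} \, \mathscr{P}(\kappa)^{1/3} \;=\; \frac{C}{(l_x l_y)^{1/3}},
\]
a $\kappa$-independent positive constant, which we take to be $\chi_0$.

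There is no substantive analytic obstacle here; the corollary is a dimensional restatement of the content already proved in Theorem \ref{Main theorem steady case}. The only minor bookkeeping point is the passage between the literal $H^1_0(\Omega)$ norm and the Dirichlet seminorm $\norm{\nabla \cdot}_{L^2(\Omega)}$, which costs at most a constant factor via Poincaré; if strict equality is desired in the constraint $\norm{\bs{u}^\kappa}_{H^1_0} \leq 1$, one simply replaces $\bs{u}^\kappa = \kappa \hat{\bs{u}}$ by $\bs{u}^\kappa = c\kappa \hat{\bs{u}}$ for a suitable Poincaré constant $c \leq 1$, which only changes the value of $\chi_0$ but not its positivity. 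The interpretive upshot is that the three-dimensional branching pipe flows constructed here drive anomalous dissipation in the steady advection-diffusion equation at the sharp rate $\kappa^{-2/3}$, and this rate is exactly the one matching the $\mathscr{P}^{1/3}$ growth of $Q^s_{\max}$.
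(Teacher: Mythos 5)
Your rescaling argument is correct and is exactly the route the paper intends: the text preceding the corollary simply says the result follows ``after appropriately rescaling the velocity fields that we created to prove Theorem~\ref{Main theorem steady case},'' and your substitution $\bs{u}^\kappa = \kappa\hat{\bs{u}}$, $T^\kappa = T$ with $\mathscr{P}(\kappa) = (l_x l_y \kappa^2)^{-1}$ is that rescaling, carried out carefully and with the enstrophy, equation, and boundary-condition bookkeeping all checked. The one phrase to tighten is the extraction of $\hat{\bs{u}}$ from the supremum defining $Q^s_{\max}$: since the theorem's lower bound is proved by plugging explicit trial fields into the variational principle, those trial fields already satisfy $Q(\hat{\bs{u}}) \gtrsim \mathscr{P}^{1/3}$, so one should cite the construction rather than the statement of the theorem (or simply note that, the supremum being larger than $C\mathscr{P}^{1/3}$, there exists an admissible $\hat{\bs{u}}$ with $Q(\hat{\bs{u}}) \geq \tfrac{C}{2}\mathscr{P}^{1/3}$), which only changes $\chi_0$ by a harmless factor.
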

We see from (\ref{anom dissp}) that the exponent for $\kappa$ is $2/3$, which is less than one. Therefore this corrolary is not as strong as the statement we would have hoped to prove.  However, the a priori upper bound (\ref{upper bound Qu}) also shows that this is the best result one can achieve in the setting considered in Corollary \ref{anom dissp miss}. However, if we allow the velocity field to be less smooth, in particular, we allow $\bs{u}^\kappa$ to be only uniformly bounded in the $L^2$ norm, then we can indeed prove
\begin{eqnarray}
\liminf_{\kappa \to 0} \kappa \dashint_{\Omega} |\nabla T^\kappa|^2 \, d \bs{x}  \geq \chi_0 > 0.
\label{anom dissp rough}
\end{eqnarray}
This can be shown after appropriately rescaling the velocity fields of \cite{doering2019optimal} used to prove Theorem 1.1 in their paper. Another possibility is to allow the walls to be rough. This has not yet,  to our knowledge, been investigated, which raises the following question: if we allow the boundary of the domain, which locally is the graph of functions that are $\alpha$--H\"older continuous with exponent $\alpha < 1$, can one also prove (\ref{anom dissp rough}) in that case? Physically, it would mean that we are increasing the heat transfer by letting the area of walls go to infinity. Indeed, it is known in the literature that fractal boundaries tend to enhance heat transfer (\cite{toppaladoddi21fractal}). Answer to such a question will, therefore, help in understanding the role played by rough boundaries in increasing the heat transfer. Along the same line, it would also be interesting to investigate the role played by a slip boundary condition for the velocity field (see \cite{drivas2022bounds} and a recent review by \cite{nobili2021role}).

\subsection{Rayleigh--B\'enard convection }
Rayleigh--B\'enard convection is the flow of fluid between two differentially heated parallel plates driven by buoyancy force. The flow is traditionally modeled by the Navier--Stokes equations under the Boussinesq approximation, written here in nondimensional form as 
\begin{subequations}
\begin{eqnarray}
\partial_t \bs{u} + \bs{u} \cdot \nabla \bs{u} = - \nabla p + Pr \Delta \bs{u} + Pr Ra T \bs{e}_z, \\
\partial_t T + \bs{u} \cdot \nabla T = \Delta T, \qquad \qquad \quad
\end{eqnarray}
\label{RB eqn}
\end{subequations}
where $Ra$ is the Rayleigh number and $Pr$ is the Prandtl number, respectively given by
\begin{eqnarray}
Ra = \frac{g \alpha H^3 (T_B - T_T)}{\kappa \nu}, \qquad Pr = \frac{\nu}{\kappa}. \nonumber
\end{eqnarray}
In these above expressions, $\nu$ is the kinematic viscosity, $\kappa$ is the thermal diffusivity, $\alpha$ is the coefficient of thermal expansion, $H$ is the height of the domain, $T_B - T_T$ is the temperature difference and $g$ is the magnitude of the gravitational acceleration acting in $-\bs{e}_z$ direction.

We solve the nondimensional governing equations (\ref{RB eqn}{\color{blue}a-b}) in domain $\Omega$ with boundary conditions
\begin{eqnarray}
\bs{u} = \bs{0}, \quad T = 1 \quad \text{ at } z = -1/2 \qquad \text{and} \qquad \bs{u} = \bs{0}, \quad T = 0 \quad \text{ at } z = 1/2. \nonumber
\end{eqnarray}

The quantity of interest is the nondimensional heat transfer known as the Nusselt number $Nu$ given by $$Nu = 1 + \langle u_z T \rangle.$$ The angle brackets denote the long-time volume average and $u_z$ is the component of the velocity in the $z$ direction. Of course, $Nu$ depends on the initial condition. However, Doering and Constantin (\cite{doering1996variational}) using the background method (see \cite{fantuzzi2022background} for a survey), proved the following a priori bound for any initial condition when $Ra \gg 1$:
$$Nu \lesssim Ra^{\frac{1}{2}}.$$
This bound is uniform in the Prandtl number $Pr$. To date, the best known upper bound, namely $Nu \leq 0.02634 Ra^{\frac{1}{2}}$, was obtained by Plasting and Kerswell (\cite{Plasting03Couette}). 

An important question is whether the scaling of this bound with respect to the Rayleigh number is sharp. Our result, in this context, proves that this scaling is indeed sharp if one replaces the momentum equation with a simple enstrophy condition.
\begin{eqnarray}
\langle |\nabla \bs{u}|^2 \rangle = Ra (Nu - 1).
\label{RB enstrophy rel}
\end{eqnarray}
In other words Theorem \ref{Main theorem steady case} proves that, for large enough Rayleigh number, there exists velocity fields (depending on $Ra$) such that the solution of the advection-diffusion equation (\ref{RB eqn}{\color{blue}b}) satisfies the relation (\ref{RB enstrophy rel}) and for which $$Nu \sim Ra^{\frac{1}{2}}.$$

\appendix
    \begin{center}
      {\bf APPENDIX}
    \end{center}

\section{Derivation of the variational principle for heat transfer (\ref{steady var prin})}
\label{Derivation var prin}
In this appendix, we derive the variational principle given in (\ref{steady var prin}). The proof is taken from the paper of Doering \& Tobasco (\cite{doering2019optimal}) and provided here for completeness. We begin by recalling 
\begin{eqnarray}
Q(\bs{u}) = \dashint_{\Omega} |\nabla T|^2 \, {\rm d} \bs{x} = 1 + \dashint_{\Omega} u_z T \, {\rm d} \bs{x},
\end{eqnarray}
where $T$ solves the steady convection diffusion equation and the velocity is assumed to be in $L^\infty(\Omega; \mathbb{R}^3)$. After substituting $$T = \theta + \frac{1}{2} - z,$$ we obtain
\begin{eqnarray}
Q(\bs{u}) = 1 + \dashint_{\Omega} |\nabla \theta|^2 \, {\rm d} \bs{x} = 1 + \dashint_{\Omega} u_z \theta \, {\rm d} \bs{x},
\label{heat theta}
\end{eqnarray}
where $\theta \in H^1_0(\Omega)$ solves $$\bs{u} \cdot \nabla \theta = \Delta \theta + u_z \qquad \text{in} \qquad \Omega,$$
and satisfies the homogeneous Dirichlet boundary conditions. Next, we consider a system of two PDEs.
\addtocounter{equation}{1}
\begin{align}
\begin{rcases}
\bs{u} \cdot \nabla \eta_0 = \Delta \xi_0 + u_z, \\
\bs{u} \cdot \nabla \xi_0 = \Delta \eta_0,
\end{rcases} \qquad \text{in} \quad \Omega,
\tag{\theequation a-b}
\end{align}
where both $\eta_0$ and $\xi_0$ satisfy the homogeneous Dirichlet boundary conditions. Clearly then 
\begin{eqnarray}
\theta = \eta_0 + \xi_0.
\label{theta eta xi}
\end{eqnarray}
By multiplying the equation ({\color{blue}A.3b}) with $\xi_0$ and integrating, one obtains 
\begin{eqnarray}
\int_{\Omega} \nabla \eta_0 \cdot \nabla \xi_0 \, {\rm d}\bs{x} = 0.
\label{grad eta grad xi}
\end{eqnarray}
Next, we multiply the equation ({\color{blue}A.3a}) with $\eta_0$ and integrate and use (\ref{grad eta grad xi}) to obtain
\begin{eqnarray}
\int_{\Omega} u_z \eta_0 \, {\rm d}\bs{x} = 0.
\label{uz eta}
\end{eqnarray}
Finally, we substitute (\ref{theta eta xi}) in (\ref{heat theta}) and use (\ref{grad eta grad xi}) and (\ref{uz eta}) to get
\begin{eqnarray}
Q(\bs{u}) - 1 = \dashint_{\Omega} |\nabla \xi_0|^2 + |\nabla \eta_0|^2 \, {\rm d} \bs{x} = \dashint_{\Omega} u_z \xi_0 \, {\rm d} \bs{x},
\label{heat eta xi}
\end{eqnarray}
which after using ({\color{blue}A.3b}) can be rewritten as 
\begin{eqnarray}
Q(\bs{u}) - 1 = 2 \dashint_{\Omega} u_z \xi_0 \, {\rm d} \bs{x} - \dashint_{\Omega} |\nabla \xi_0|^2  \, {\rm d} \bs{x} - \dashint_{\Omega}   |\nabla \Delta^{-1} \bs{u} \cdot \nabla \xi_0|^2 \, {\rm d} \bs{x},
\label{heat eta xi rewrite}
\end{eqnarray}
where $\Delta^{-1}$ denotes the inverse Laplacian operator in $\Omega$ corresponding to the homogeneous Dirichlet boundary conditions. We now consider the following maximization problem
\begin{eqnarray}
\sup_{\xi \in H^1_0(\Omega)} 2 \dashint_{\Omega} u_z \xi \, {\rm d} \bs{x} - \dashint_{\Omega} |\nabla \xi|^2  \, {\rm d} \bs{x} - \dashint_{\Omega}   |\nabla \Delta^{-1} \bs{u} \cdot \nabla \xi|^2 \, {\rm d} \bs{x},
\label{a variational problem}
\end{eqnarray}
which is strictly concave, therefore, the only maximizer satisfies the Euler--Lagrange equation $$\bs{u} \cdot \nabla \Delta^{-1} (\bs{u} \cdot \nabla \xi) = \Delta \xi + u_z.$$
We see that $\xi_0$ is the solution of above equation, therefore, $\xi_0$ maximizes (\ref{a variational problem}), which then combined with (\ref{heat eta xi rewrite}) gives us
\begin{eqnarray}
Q(\bs{u}) - 1 = \max_{\xi \in H^1_0(\Omega)} 2 \dashint_{\Omega} u_z \xi \, {\rm d} \bs{x} - \dashint_{\Omega} |\nabla \xi|^2  \, {\rm d} \bs{x} - \dashint_{\Omega}   |\nabla \Delta^{-1} \bs{u} \cdot \nabla \xi|^2 \, {\rm d} \bs{x}.
\end{eqnarray}
To make this formulation homogeneous in the variable $\xi$, we consider the transformation $\xi \to s \xi$ and optimize in the scaling $s$ to obtain 
\begin{eqnarray}
Q(\bs{u}) - 1 = \sup_{\substack{ \xi \in H^1_0(\Omega) \\ \xi \not\equiv 0 }} \; \frac{\left( \dashint_{\Omega} u_z \xi \, {\rm d} \bs{x}\right)^2}{\dashint_{\Omega} |\nabla \xi|^2 \, {\rm d} \bs{x} + \dashint_{\Omega} |\nabla \Delta^{-1} \diverge (\bs{u} \xi)|^2 \, {\rm d} \bs{x}}.
\label{The variational principle: main corollary proof int}
\end{eqnarray}
Next, from the definition of $Q^s_{\max}(\mathscr{P})$ given in (\ref{Qmax steady advec-diff}), we simply obtain
\begin{eqnarray}
Q^s_{\max}(\bs{u}) - 1 = \sup_{\substack{\bs{u} \in L^\infty(\Omega) \\ \nabla \cdot \bs{u} = 0, \; \left. \bs{u} \right|_{\partial \Omega} = \bs{0} \\ \dashint_{\Omega} |\nabla \bs{u}|^2 \, {\rm d} \bs{x} \leq \mathscr{P}}}  \sup_{\substack{ \xi \in H^1_0(\Omega) \\ \xi \not\equiv 0 }} \; \frac{\left( \dashint_{\Omega} u_z \xi \, {\rm d} \bs{x}\right)^2}{\dashint_{\Omega} |\nabla \xi|^2 \, {\rm d} \bs{x} + \dashint_{\Omega} |\nabla \Delta^{-1} \diverge (\bs{u} \xi)|^2 \, {\rm d} \bs{x}}.
\label{The variational principle: main corollary proof int}
\end{eqnarray}
Finally, considering the following transformation
\begin{eqnarray}
\bs{u} \to \, \frac{\mathscr{P}^{\frac{1}{2}}} {\dashint_{\Omega} |\nabla \bs{u}|^2 \, {\rm d} \bs{x}} \, \bs{u}, \qquad \xi \to \frac{\dashint_{\Omega} |\nabla \bs{u}|^2 \, {\rm d} \bs{x}}{\mathscr{P}^{\frac{1}{2}}} \, \xi, 
\end{eqnarray}
in (\ref{The variational principle: main corollary proof int}) leads to the desired result stated in Proposition \ref{steady var prin}.

\section{Bounds on a few integrals}
\label{Appendix: Bounds on a few integrals}

\begin{lemma}
\label{Poisson's equation: a few important basic integrals}
Let $a$ and $b$ be as given in (\ref{The Poisson's equation: a b placeholders}). Let $z, z^\prime \in (-1/2, 1/2)$ and $z \neq z^\prime$, then the following estimates hold

(i) 
\begin{eqnarray}
\int_0^1 \left[\frac{1}{\sqrt{\sigma^2 + a^2}} - \frac{1}{\sqrt{\sigma^2 + b^2}}\right] {\rm d} \sigma \leq \frac{1}{2} \log \left(1 + \frac{4 (b^2-a^2)}{3 a^2}\right). 
\end{eqnarray}

(ii)

\begin{eqnarray}
\int_0^1 \left[\frac{\sigma}{\sqrt{\sigma^2 + a^2}} - \frac{\sigma}{\sqrt{\sigma^2 + b^2}}\right] {\rm d} \sigma \leq \frac{b^2-a^2}{b}.
\end{eqnarray}

(iii)

\begin{eqnarray}
\int_0^1 \sigma P_1(\sigma, z, z^\prime) \, {\rm d} \sigma \leq \frac{\pi}{2 b^3}. 
\end{eqnarray}

(iv)

\begin{eqnarray}
\int_0^1 \sigma P_2(\sigma, z, z^\prime) \, {\rm d} \sigma \leq \frac{\pi^3}{4 b^3}. 
\end{eqnarray}

\end{lemma}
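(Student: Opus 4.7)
All four bounds reduce to elementary computations: every integrand is a rational function of $\sigma$ and $\sqrt{\sigma^2+c^2}$ (for $c \in \{a,b\}$), and has a standard antiderivative. The primitives I will use are $\int d\sigma/\sqrt{\sigma^2+c^2} = \log(\sigma+\sqrt{\sigma^2+c^2})$, $\int \sigma\,d\sigma/\sqrt{\sigma^2+c^2} = \sqrt{\sigma^2+c^2}$, and $\int \sigma\,d\sigma/(\sigma^2+c^2)^{3/2} = -1/\sqrt{\sigma^2+c^2}$. My plan is to evaluate each integral in closed form and reduce each claimed inequality to an algebraic statement in $a,b$ that is settled using only $0 \leq a \leq b$, which follows from (\ref{The Poisson's equation: a b placeholders a simple equation}).

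For (i), evaluating at the endpoints gives $\log\!\bigl[b(1+\sqrt{1+a^2})/(a(1+\sqrt{1+b^2}))\bigr]$; squaring and clearing denominators, the claim reduces to $3b^2(1+\sqrt{1+a^2})^2 \leq (4b^2-a^2)(1+\sqrt{1+b^2})^2$, which follows from the monotonicity $(1+\sqrt{1+a^2})^2 \leq (1+\sqrt{1+b^2})^2$ combined with the trivial estimate $3b^2 \leq 4b^2-a^2$. For (ii), the integral equals $(\sqrt{1+a^2}-a) - (\sqrt{1+b^2}-b)$; rewriting each factor as $1/(\sqrt{1+c^2}+c)$ bounds this by $b-a$, and the elementary step $b-a \leq (b^2-a^2)/b$ finishes the argument.

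Parts (iii) and (iv) are the real work, and the main obstacle is establishing the triple cancellation that neutralizes the prefactor $(b^2-a^2)^{-3}$. My strategy is to substitute the antiderivatives above, together with the extra primitive $\int \sigma(2\sigma^2+c^2)(\sigma^2+c^2)^{-3/2}\,d\sigma = 2\sqrt{\sigma^2+c^2} + c^2/\sqrt{\sigma^2+c^2}$ needed for $P_2$, evaluate at $\sigma = 0$ and $\sigma = 1$, and then apply the elementary identity
\[\frac{4}{X+Y} - \frac{1}{X} - \frac{1}{Y} = -\frac{(X-Y)^2}{XY(X+Y)}\]
to both endpoint contributions: once with $(X,Y) = (a,b)$ at $\sigma = 0$, and once with $(X,Y) = (\sqrt{1+a^2},\sqrt{1+b^2})$ at $\sigma = 1$. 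Combined with $X-Y = (X^2-Y^2)/(X+Y)$, each endpoint contribution produces a factor of $(b^2-a^2)^3$ that precisely cancels the prefactor. For (iii) I anticipate the clean closed-form identity $\int_0^1 \sigma P_1\,d\sigma = \pi/(2(a+b)^3) - (\text{positive term})$, from which the bound follows via $a+b \geq b$; part (iv) is an algebraically heavier but structurally identical computation. The delicate point will be verifying the sign of the residual so that dropping it preserves the stated one-sided bound rather than merely reproducing the correct order.
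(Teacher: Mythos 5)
Your overall strategy — compute each integral in closed form from standard antiderivatives, then reduce the claimed bounds to elementary algebraic facts about $a,b$ using only $0 \le a \le b$ — is the same approach the paper takes. Part (ii) is essentially verbatim the paper's argument. For (i), your route is actually cleaner and more general: the paper first discards the negative piece $\tfrac12\log\frac{1+\sqrt{1+a^2}}{1+\sqrt{1+b^2}}$ and then estimates the remainder, implicitly invoking the a priori size restriction $a < 2/\pi$ from (\ref{The Poisson's equation: a b placeholders}) to control $\sqrt{1+a^2}+1$, whereas your reduction to the product of the two monotonicity facts $(1+\sqrt{1+a^2})^2 \le (1+\sqrt{1+b^2})^2$ and $3b^2 \le 4b^2-a^2$ needs nothing beyond $a \le b$. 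For (iii) your anticipated closed form $\tfrac{\pi}{2(a+b)^3}$ minus a positive residual is exactly what the paper records, and the reciprocal identity you name is exactly the cancellation that makes the antiderivative of $\sigma P_1$ collapse to $-\frac{(Y-X)^3}{XY}$ with $X = \sqrt{\sigma^2+a^2}$, $Y = \sqrt{\sigma^2+b^2}$.

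For (iv), which you leave as a sketch and flag as the delicate point, the cancellation is not governed by the same identity. The antiderivative of $\sigma P_2$ produces, besides the reciprocal-type piece, a cubic piece $\pm\bigl[4(b^3-a^3) - 3(b^2-a^2)(a+b)\bigr]$ at $\sigma = 0$, which collapses to $\mp(b-a)^3$ via the polynomial companion $4(c^2+cd+d^2) - 3(c+d)^2 = (c-d)^2$ rather than via $\tfrac{4}{X+Y}-\tfrac1X-\tfrac1Y$; and at $\sigma = 1$, after substituting $a^2 = X^2-1$, $b^2 = Y^2-1$ with $X = \sqrt{1+a^2}$, $Y = \sqrt{1+b^2}$, \emph{both} identities enter, one producing $-(Y-X)^3$ and the other $-\tfrac{(Y-X)^3}{XY}$. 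The paper's resulting closed form is
\begin{equation*}
\int_0^1 \sigma P_2\,d\sigma = \frac{\pi^3}{4}\left[\frac{1}{(a+b)^3} - \frac{1}{(X+Y)^3} - \frac{1}{XY(X+Y)^3}\right],
\end{equation*}
and the sign question you raise resolves favorably: both subtracted terms are positive, so dropping them and using $a+b \ge b$ gives the stated bound. So the plan for (iv) is in the right direction but needs both the correct identity and an explicit verification of the residual's sign to be complete.
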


\begin{proof}[Proof of Lemma \ref{Poisson's equation: a few important basic integrals}]
\noindent
\\
Recall from the definition (\ref{The Poisson's equation: a b placeholders}) of $a$ and $b$ that $a \leq b$, a fact which we will use in the proofs below.\\
(i)
\begin{eqnarray}
\int_0^1 \left[\frac{1}{\sqrt{\sigma^2 + a^2}} - \frac{1}{\sqrt{\sigma^2 + b^2}}\right] {\rm d} \sigma && = \frac{1}{2} \log \frac{1+\sqrt{1+a^2}}{1+\sqrt{1+b^2}}+\frac{1}{2} \log \frac{\sqrt{1+b^2}-1}{\sqrt{1+a^2}-1} \nonumber \\
&& \leq \frac{1}{2} \log \left(1 + \frac{\sqrt{1+b^2}-\sqrt{1+a^2}}{\sqrt{1+a^2}-1}\right) \nonumber \\
&& \leq \frac{1}{2} \log \left(1 + \frac{8 (b^2-a^2)}{3 a^2 \left[\sqrt{1+b^2}+\sqrt{1+a^2}\right]}\right) \nonumber \\
&& \leq \frac{1}{2} \log \left(1 + \frac{4 (b^2-a^2)}{3 a^2}\right).
\end{eqnarray}
(ii)
\begin{eqnarray}
\int_0^1 \left[\frac{\sigma}{\sqrt{\sigma^2 + a^2}} - \frac{\sigma}{\sqrt{\sigma^2 + b^2}}\right] {\rm d} \sigma && = \sqrt{1+a^2} - \sqrt{1+b^2} + b - a \leq \frac{b^2-a^2}{b+a} \leq \frac{b^2-a^2}{b}.
\end{eqnarray}
(iii)
\begin{eqnarray}
\int_0^1 \sigma P_1(\sigma, z, z^\prime) \, {\rm d} \sigma = \frac{\pi a b}{2} \left[\frac{1}{a b (b+a)^3} - \frac{1}{\sqrt{1+a^2} \sqrt{1+b^2} (\sqrt{1+b^2} + \sqrt{1+a^2})^3}\right] \leq \frac{\pi}{2 b^3}.
\end{eqnarray}
(iv)
\begin{eqnarray}
\int_0^1 \sigma P_2(\sigma, z, z^\prime) \, {\rm d} \sigma = \frac{\pi^3}{4} \left[\frac{1}{(b+a)^3} - \frac{1}{(\sqrt{1+b^2} + \sqrt{1+a^2})^3} \right. \qquad \qquad \qquad \qquad \qquad \qquad \nonumber \\ \left. - \frac{1}{\sqrt{1+a^2} \sqrt{1+b^2} (\sqrt{1+b^2} + \sqrt{1+a^2})^3}\right] \leq \frac{\pi^3}{4 b^3}.
\end{eqnarray}

\end{proof}

\section{A few basic lemmas}
\label{Appendix: A few basic lemmas}
\begin{lemma} \label{A few basic lemmas: missing diff at one point}
Let $f \in C^0(\mathbb{R})$ such that $f^\prime(r)$ exists for all $r \neq 0$. Now assume that $f^\prime(r) \to \mathfrak{f}^\prime_0$ as $r \to 0$ for some finite $ \mathfrak{f}^\prime_0$, then $f^\prime(0)$ exists and its value is $ \mathfrak{f}^\prime_0$.
\end{lemma}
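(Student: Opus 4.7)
The plan is to deduce the existence of $f'(0)$ and its value directly from the definition of the derivative, using the mean value theorem to relate the difference quotient to values of $f'$ near $0$, where the hypothesis provides control.

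More precisely, I would proceed as follows. Fix $h \neq 0$ and consider the closed interval with endpoints $0$ and $h$. Since $f \in C^0(\mathbb{R})$, $f$ is continuous on this closed interval, and since $f'$ exists everywhere except possibly at $0$, $f$ is differentiable on the open interval strictly between $0$ and $h$ (which does not contain $0$). The hypotheses of the classical mean value theorem are therefore satisfied, so there exists $c_h$ strictly between $0$ and $h$ with
\begin{equation*}
\frac{f(h) - f(0)}{h} = f'(c_h).
\end{equation*}
Because $c_h$ lies strictly between $0$ and $h$, we have $0 < |c_h| < |h|$, so $c_h \to 0$ (and $c_h \neq 0$) as $h \to 0$. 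By the hypothesis that $f'(r) \to \mathfrak{f}'_0$ as $r \to 0$, we conclude
\begin{equation*}
\lim_{h \to 0} \frac{f(h) - f(0)}{h} = \lim_{h \to 0} f'(c_h) = \mathfrak{f}'_0.
\end{equation*}
This limit exists and equals $\mathfrak{f}'_0$, which is exactly the statement that $f'(0)$ exists and takes the value $\mathfrak{f}'_0$.

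There is no serious obstacle here; the only subtlety worth noting is that one must apply MVT on the closed interval between $0$ and $h$ (handling both signs of $h$ symmetrically) and observe that the intermediate point $c_h$ is automatically nonzero, so that the assumed limit of $f'$ at $0$ (which a priori only controls the behavior on the punctured neighborhood of $0$) can be invoked without circularity. The argument works identically for one-sided derivatives if one restricts to $h > 0$ or $h < 0$, which confirms robustness of the conclusion.
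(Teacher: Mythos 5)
Your proof is correct and takes exactly the same approach as the paper: apply the mean value theorem on the interval between $0$ and $h$ to write the difference quotient as $f'(c_h)$, then let $h \to 0$. Your writeup is a slightly more careful version (explicitly treating both signs of $h$ and noting $c_h \neq 0$), but it is the same argument.
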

\begin{proof}[Proof of Lemma \ref{A few basic lemmas: missing diff at one point}]
Using the mean value theorem, we have
\begin{eqnarray}
 \frac{f(h) - f(0)}{h} =  f^\prime(\eta) \qquad \text{for some } \eta \in (0, h),
\end{eqnarray}
The proof of lemma follows by taking $h \to 0$.
\end{proof}
\begin{definition}
A function $f : \mathbb{R} \to \mathbb{R}$ is said to have the property (N) if
\begin{eqnarray}
f \in C^\infty(\mathbb{R}) \qquad \text{and} \qquad
\left.\frac{d^{2n-1}f(r)}{d \, r^{2n-1}} \right|_{r = 0} = 0 \quad \forall \; n \in \mathbb{N}.
\end{eqnarray}
\end{definition}
It is clear that if $f(r)$ has the property (N) then so does the $f(\alpha r)$ for any $\alpha \neq 0$. Furthermore, we have the following lemma.
\begin{lemma} \label{A few basic lemmas: property N}
If a function $f : \mathbb{R} \to \mathbb{R}$ has the property \text{(N)} then so does the function $g : \mathbb{R} \to \mathbb{R}$ defined as
\begin{eqnarray}
g(r) \coloneqq 
\begin{cases}
\frac{1}{r} \frac{d f(r)}{d r} \quad \text{if} \quad r \neq 0, \\
f^{\prime \prime}(0) \quad \text{if} \quad r = 0,
\end{cases}
\end{eqnarray}
has the property \text{(N)}.
\end{lemma}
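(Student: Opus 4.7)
The plan is to factor $f'(r)$ as $r$ times a smooth function on all of $\mathbb{R}$ and then identify that function with $g$, reducing both claims to elementary computations.

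First, since $f \in C^\infty(\mathbb{R})$ and property (N) gives $f'(0) = 0$, I would use the standard identity
\begin{eqnarray}
f'(r) = f'(r) - f'(0) = \int_0^r f''(s)\,ds = r \int_0^1 f''(tr)\,dt. \nonumber
\end{eqnarray}
Define $\phi(r) \coloneqq \int_0^1 f''(tr)\,dt$. Differentiation under the integral sign (valid because $f'' \in C^\infty$ and the integrand and all its $r$-derivatives are continuous on the compact set $[0,1] \times K$ for any compact $K \subset \mathbb{R}$) shows $\phi \in C^\infty(\mathbb{R})$. For $r \neq 0$ we have $\phi(r) = f'(r)/r = g(r)$, and $\phi(0) = f''(0) = g(0)$, so $g = \phi \in C^\infty(\mathbb{R})$. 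This establishes the smoothness half of property (N) for $g$.

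For the vanishing of the odd derivatives of $g$ at zero, I would start from the identity $r g(r) = f'(r)$ (which holds for all $r \in \mathbb{R}$) and apply the Leibniz rule to obtain, for every $m \geq 1$,
\begin{eqnarray}
f^{(m+1)}(r) = \frac{d^m}{dr^m}\bigl(r g(r)\bigr) = r\, g^{(m)}(r) + m\, g^{(m-1)}(r). \nonumber
\end{eqnarray}
Evaluating at $r = 0$ gives $g^{(m-1)}(0) = f^{(m+1)}(0)/m$ for all $m \geq 1$. Taking $m = 2n$ yields $g^{(2n-1)}(0) = f^{(2n+1)}(0)/(2n) = 0$ for every $n \in \mathbb{N}$, by property (N) for $f$.

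There is no real obstacle here: the only subtlety is verifying that the integral representation $\phi(r) = \int_0^1 f''(tr)\,dt$ really produces a $C^\infty$ function on all of $\mathbb{R}$ (not merely a continuous extension), and this follows by routine differentiation under the integral sign. The Leibniz computation then mechanically converts property (N) for $f$ into property (N) for $g$, shifting the index appropriately.
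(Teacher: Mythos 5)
Your proof is correct, and it takes a genuinely different route from the paper. The paper establishes smoothness of $g$ by brute force: it writes an explicit closed-form expression for $g^{(n)}(r)$ valid for $r \neq 0$, evaluates the limit as $r \to 0$ via a careful L'Hospital argument, and then invokes an auxiliary mean-value lemma (their Lemma \ref{A few basic lemmas: missing diff at one point}) inductively in $n$ to promote the existence of that limit to actual differentiability at $r = 0$. You instead invoke Hadamard's integral representation $f'(r) = r\int_0^1 f''(tr)\,dt$, which hands you $g \in C^\infty(\mathbb{R})$ globally in one step via differentiation under the integral sign, and then extract all derivative values at $0$ from a single Leibniz-rule application to the identity $r\,g(r) = f'(r)$. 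Your argument buys cleanliness and avoids the delicate interchange of limit and derivative at the singular point; it also produces the same index shift $g^{(2n-1)}(0) = f^{(2n+1)}(0)/(2n)$ that the paper obtains (modulo a typo in their formula where the dummy index $i$ should read $n$). The paper's approach, being fully elementary (L'Hospital plus a mean-value lemma), avoids appealing to the differentiation-under-the-integral machinery, but at the cost of longer bookkeeping. Either is acceptable; yours is the more standard textbook argument.
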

\begin{proof}[Proof of Lemma \ref{A few basic lemmas: property N}]
It is clear that $g(r)$ is continuous when $r \neq 0$. Now from L'Hospital's rule we obtain
\begin{eqnarray}
\lim_{r \to 0} \frac{1}{r} \frac{d f(r)}{d r} = \lim_{r \to 0} \frac{d^2 f(r)}{d r^2} = f^{\prime \prime}(0),
\label{A few basic lemmas: cont. of g}
\end{eqnarray} 
therefore, using Lemma \ref{A few basic lemmas: missing diff at one point}, $g(r)$ is continuous at $r = 0$ as well. Now, for $n \geq 1$, we have
\begin{eqnarray}
\frac{d^n g(r)}{d r^n} = \frac{n !}{r^{n+1}} \sum_{i = 0}^{n} \frac{(-1)^{n-i} r^i}{i!} \frac{d^{i+1} f(r)}{d r^{i+1}} \qquad \text{if} \quad r \neq 0.
\end{eqnarray}
Taking the limit $r \to 0$ and using the L'Hospital's rule, we obtain
\begin{eqnarray}
\lim_{r \to 0} \frac{d^n g(r)}{d r^n} && = \lim_{r \to 0} \frac{n !}{r^{n+1}} \sum_{i = 0}^{n} \frac{(-1)^{n-i} r^i}{i!} \frac{d^{i+1} f(r)}{d r^{i+1}} \nonumber \\
&& = \lim_{r \to 0} \frac{n !}{(n+1) r^{n}} \left[\sum_{i = 0}^{n} \frac{(-1)^{n-i} r^i}{i!} \frac{d^{i+2} f(r)}{d r^{i+2}} - \sum_{i = 1}^{n} \frac{(-1)^{n-i-1} r^{i-1}}{(i-1)!} \frac{d^{i+1} f(r)}{d r^{i+1}}  \right] \nonumber \\
&& = \left. \frac{1}{(n+1)}\frac{d^{i+2} f(r)}{d r^{i+2}} \right|_{r=0}. 
\label{A few basic lemmas: the main formula n deri}
\end{eqnarray}
Noting that $g$ is continuous and using Lemma \ref{A few basic lemmas: missing diff at one point} and the formula (\ref{A few basic lemmas: the main formula n deri}), for $n=1$, we see that $g(r)$ is differentiable at $r = 0$, furthermore, $g^\prime(r)$ is continuous everywhere. Proceeding in a similar manner, an induction argument then shows that $g$ is infinitely differentiable. Once again, noting from the formula (\ref{A few basic lemmas: the main formula n deri}) that all the odd derivatives of $g$ are zero at $r = 0$, proves the lemma. 
\end{proof}

\begin{lemma} \label{A few basic lemmas: C inf vel fields}
Let $g: \mathbb{R}^2 \to \mathbb{R}$ be given by $g(y, z) = y^{\alpha_y} z^{\alpha_z} f(\varrho)$, where $\alpha_y$ and $\alpha_z$ are nonnegative integers and $\varrho$ is a placeholder for $\sqrt{y^2+z^2}$. Furthermore, the function $f:\mathbb{R} \to \mathbb{R}$ has the property (N). Then the function $g$ is infinitely differentiable.
\end{lemma}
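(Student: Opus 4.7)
Since $y^{\alpha_y} z^{\alpha_z}$ is a polynomial in $(y,z)$, by the Leibniz rule it is enough to prove the pure case $\alpha_y = \alpha_z = 0$; that is, it suffices to show that whenever $f$ has property (N), the function
\begin{eqnarray}
\widetilde{g}(y,z) \coloneqq f(\varrho), \qquad \varrho = \sqrt{y^2+z^2},  \nonumber
\end{eqnarray}
belongs to $C^\infty(\mathbb{R}^2)$. The plan is to prove this by induction on $k \geq 0$, establishing that $\widetilde{g} \in C^k(\mathbb{R}^2)$ for every $k$. The engine of the induction will be Lemma \ref{A few basic lemmas: property N}, which supplies an auxiliary function $f_1:\mathbb{R} \to \mathbb{R}$, defined by $f_1(r) = f'(r)/r$ for $r \neq 0$ and $f_1(0) = f''(0)$, which again has property (N). Thus the inductive hypothesis can be reapplied to $f_1$ at each step.

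The base case $k=0$ is immediate since $f$ and $\varrho$ are continuous. For the inductive step, the key identity I plan to establish is
\begin{eqnarray}
\partial_y \widetilde{g}(y,z) = y\, f_1(\varrho), \qquad \partial_z \widetilde{g}(y,z) = z\, f_1(\varrho), \qquad (y,z) \in \mathbb{R}^2, \nonumber
\end{eqnarray}
valid pointwise on all of $\mathbb{R}^2$. Away from the origin this is the chain rule, since $\partial_y \varrho = y/\varrho$ and $f'(\varrho)/\varrho = f_1(\varrho)$. At the origin, the right-hand side vanishes because of the $y$ (respectively $z$) factor; on the left-hand side I will verify directly that $\partial_y \widetilde g(0,0) = \lim_{h \to 0} (f(|h|) - f(0))/h = 0$, using $f'(0) = 0$ from property (N) to get $f(|h|) - f(0) = O(h^2)$. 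A similar argument handles points on the coordinate axes where one of $y, z$ vanishes but $\varrho \neq 0$.

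Granting the identity, the inductive hypothesis applied to $f_1$ (which has property (N) by Lemma \ref{A few basic lemmas: property N}) gives $f_1(\varrho) \in C^k(\mathbb{R}^2)$, and therefore the right-hand sides $y\,f_1(\varrho)$ and $z\,f_1(\varrho)$ belong to $C^k(\mathbb{R}^2)$ as products of smooth polynomials with $C^k$ functions. Since $\widetilde g$ is continuous and has continuous (indeed $C^k$) first partial derivatives on $\mathbb{R}^2$, standard one-variable calculus gives $\widetilde g \in C^{k+1}(\mathbb{R}^2)$, closing the induction. Multiplying back by the polynomial $y^{\alpha_y} z^{\alpha_z}$ then yields $g \in C^\infty(\mathbb{R}^2)$.

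The main technical point I foresee is the verification of the gradient identity at the origin and along the axes; everything else is a clean invocation of Lemma \ref{A few basic lemmas: property N} plus the product rule. The axis case in particular needs Lemma \ref{A few basic lemmas: missing diff at one point} or the continuous extension supplied by Lemma \ref{A few basic lemmas: property N} to ensure that $f_1(\varrho)$ is genuinely defined and continuous where $y/\varrho$ or $z/\varrho$ would be indeterminate; this is exactly why the $f_1$-extension (rather than the raw quotient $f'/r$) is the right object to carry through the induction.
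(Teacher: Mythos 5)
Your proposal is correct and fills in exactly the argument the paper gestures at: induction on the order of differentiability, closed by invoking Lemma \ref{A few basic lemmas: property N} to produce the property-(N) function $f_1$ realizing $\nabla(f(\varrho)) = (y,z)\,f_1(\varrho)$. One small over-caution: on the coordinate axes away from the origin, $\varrho$ is smooth, so the ordinary chain rule already gives the gradient identity there and no separate verification is needed; the only genuinely delicate point is the origin, which you handle correctly via $f'(0)=0$.
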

\begin{proof}
We can prove this lemma using an induction argument combined with Lemma \ref{A few basic lemmas: property N}. 
\end{proof}

Now it is a standard exercise in classical real analysis to show that the functions $\ol{\varphi}(r)$, $h(r)$, $\Psi_s(r)$, $\Psi_e(r)$, $\frac{\Psi_s(r)}{r^2}$ and $\frac{\Psi_e(r)}{r^2}$, with relevant definitions given in (\ref{Construction: over bar phi}), (\ref{Construction: the function h})and (\ref{Construction: Psis Psie}), have the property (N). Using Lemma \ref{A few basic lemmas: C inf vel fields}, one can then conclude that the velocity field, as defined in (\ref{Construction of the flow: Red comp vel field}), is infinitely smooth.

\bibliographystyle{halpha-abbrv}
\bibliography{references} % see references.bib for bibliography management

\end{document}